\newtheorem{thm}{Theorem}[section]
\newtheorem{lem}[thm]{Lemma}
\newtheorem{prop}[thm]{Proposition}
\newtheorem{df}[thm]{Definition}
\newtheorem*{rem}{Remark}
\newtheorem{question}{Question}
\numberwithin{equation}{section}
\newcommand{\E}{\mathbf{E}}
\newcommand{\prob}{\mathbf{P}}
\newcommand{\R}{\mathbb{R}}
\newcommand{\Z}{\mathbb{Z}}
\newcommand{\N}{\mathbb{N}}
\DeclareMathOperator{\Ann}{Ann}
\DeclareMathOperator{\Leb}{Leb}
\DeclareMathOperator{\Vol}{Vol}
\def\iddots{\mathinner{\mkern1mu\raise\p@
		\vbox{\kern7\p@\hbox{.}}\mkern2mu
		\raise4\p@\hbox{.}\mkern2mu\raise7\p@\hbox{.}\mkern1mu}}
\title{The size of the boundary in first-passage percolation}
\author{Michael Damron \thanks{The research of M. D. is supported by NSF grant DMS-0901534 and an NSF CAREER grant.} \\ \small{Georgia Tech}  \and Jack Hanson \thanks{The research of J. H. is supported by NSF grant DMS-1612921.}\\ \small{City College of New York} \and Wai-Kit Lam \\ \small{Indiana University Bloomington}}
\begin{document}

\maketitle 
\begin{abstract}
First-passage percolation is a random growth model defined using i.i.d. edge-weights $(t_e)$ on the nearest-neighbor edges of $\mathbb{Z}^d$. An initial infection occupies the origin and spreads along the edges, taking time $t_e$ to cross the edge $e$. In this paper, we study the size of the boundary of the infected (``wet'') region at time $t$, $B(t)$. It is known that $B(t)$ grows linearly, so its boundary $\partial B(t)$ has size between $ct^{d-1}$ and $Ct^d$. Under a weak moment condition on the weights, we show that for most times, $\partial B(t)$ has size of order $t^{d-1}$ (smooth). On the other hand, for heavy-tailed distributions, $B(t)$ contains many small holes, and consequently we show that $\partial B(t)$ has size of order $t^{d-1+\alpha}$ for some $\alpha>0$ depending on the distribution. In all cases, we show that the exterior boundary of $B(t)$ (edges touching the unbounded component of the complement of $B(t)$) is smooth for most times. Under the unproven assumption of uniformly positive curvature on the limit shape for $B(t)$, we show the inequality $\#\partial B(t) \leq (\log t)^C t^{d-1}$ for all large $t.$
\end{abstract}

	\section{Introduction}	
	
	In this paper, we study properties of the boundary of the growing set in first-passage percolation (FPP), a random growth model. Consider the graph $(\Z^d, \mathcal{E}^d)$ for $d \geq 2$, where $\mathcal{E}^d$ is the set of nearest-neighbor edges of $\mathbb{Z}^d$. FPP is defined as follows. Let $(t_e)_{e\in\mathcal{E}^d}$ be a family of i.i.d. nonnegative random variables. We define a finite path as an alternating sequence of vertices and edges $(x_0,e_1,x_1,\ldots, e_n, x_n)$, where $x_i\in \Z^d$ and $e_i=\{x_{i-1}, x_i\}\in\mathcal{E}^d$, and an infinite path as an infinite alternating sequence $(x_0,e_1,x_1,\ldots)$. For $x,y\in\Z^d$, define the first-passage time from $x$ to $y$ by
	\[
	T(x,y) = \inf_{\gamma  : x \to y}T(\gamma),
	\]
	where the infimum is over all lattice paths $\gamma$ from $x$ to $y$, and $T(\gamma):=\sum_{e\in\gamma} t_e$. Then $T(\cdot,\cdot)$ defines a pseudometric on $\Z^d$. Consider
	\[
	B(t) = \{x\in\Z^d: T(0,x)\leq t\},
	\]
	the ball centered at the origin with radius $t \geq 0$. Of interest are the geometric properties of $B(t)$ when $t$ is large. Motivated by a question of K. Burdzy, which appeared later in \cite{Burdzy} (see some earlier references listed below), we aim to describe the size of the boundary of $B(t)$, and to determine if it is surface-like (smooth) or fractal-like (rough). We refer the reader to the survey \cite{survey} for other aspects of FPP.
	
	We will consider two types of boundaries, the edge boundary and the edge exterior boundary. 
	
	\begin{df}
		Let $V\subseteq \Z^d$.
		\begin{enumerate}
			\item The edge boundary of $V$ is the set
			\[
			\partial_{\textnormal{e}} V= \{\{x,y\} \in \mathcal{E}^d: x\in V,\;y\in\Z^d\setminus V\}.
			\]
			\item The vertex exterior boundary $\partial^{\textnormal{ext}} V$ of $V \subseteq \Z^d$ is the set of all $x \in \Z^d \setminus V$ which are
			\begin{enumerate}
				\item adjacent to a vertex in $V$, and
				\item the starting point of some infinite vertex self-avoiding path which does not intersect $V$.
			\end{enumerate}
			
			The edge exterior boundary $\partial^{\textnormal{ext}}_{\textnormal{e}} V$ of a set $V\subseteq \mathbb{Z}^d$ is the set of edges $\{x, y\}$ for some $y \in V$ and $x \in \partial^{\textnormal{ext}} V$.
		\end{enumerate}		
	\end{df}
	
	Write $\#V$ for the cardinality of a set $V$. The specific question we address is:
	\begin{center}
		{\it What is the typical order of $\#\partial_{\textnormal{e}} B(t)$ or $\#\partial^{\textnormal{ext}}_{\textnormal{e}} B(t)$?}
	\end{center}

	We can obtain some straightforward bounds from shape theorems, which were first proved by Richardson \cite{R} and Cox-Durrett \cite{CD} with weaker forms extended to higher dimensions by Kesten \cite{kesten2}. To state a shape theorem, we first extend $T$ to $\R^d\times\R^d$ by defining $T(x,y) = T([x],[y])$ for $x,y\in \R^d$, where $[x]$ is the unique vertex in $\Z^d$ such that $x\in [x]+ [0,1)^d$ (similarly for $[y]$).
	Let 
	\[
	\bar{B}(t) = \{x\in \R^d: T(0,x)\leq t\}
	\]
	and let $p_c=p_c(d)$ be the critical threshold for Bernoulli bond percolation on $\Z^d$ (see \cite{Grimmett}). If $\prob(t_e=0)<p_c$, then there exists a nonrandom, compact, convex set $\mathcal{B}\subseteq\R^d$ with nonempty interior and with the symmetries of $\mathbb{Z}^d$ that fix the origin, such that almost surely,
	\begin{equation}
	\label{eq:shape_thm}
	\Vol\left(\frac{\bar{B}(t)}{t}\Delta \mathcal{B}\right)\to 0 \quad \text{as $t\to\infty$.}
	\end{equation}
	Here $\Delta$ is the symmetric difference, $\Vol$ is the $d$-dimensional volume, and we use the notation $cA = \{ca: a \in A\}$ for $A \subseteq \mathbb{R}^d$ and $c \in \mathbb{R}$. Using the fact that $\Vol(\bar{B}(t)) = \# B(t)$, we can easily obtain from \eqref{eq:shape_thm} that there exist $c_1, c_2>0$ such that almost surely, $c_1t^d\leq \# B(t) \leq c_2t^d$ for all large $t$. Together with the isoperimetric inequality and the fact that $\#\partial_{\textnormal{e}} V \leq 2d\# V$,  we can show that there exists a constant $c_3>0$ such that almost surely,
	\begin{equation}
	\label{eq:volume}
	c_3 t^{d-1} \leq \#\partial_{\textnormal{e}} B(t) \leq 2dc_2 t^d \;\text{ for all large $t$.}
	\end{equation}
	(Similar inequalities hold for the exterior boundary.) In fact, one can even deduce from \eqref{eq:shape_thm} that $\#\partial_{\textnormal{e}} B(t) = o(t^d)$ as $t\to\infty$.
	
	Note that \eqref{eq:volume} holds without any moment assumption on $t_e$. One can obtain better upper bounds on $\#\partial_\textnormal{e} B(t)$ if we assume more about the distribution of $t_e$. We first state a result about the convergence rate to the limit shape \cite[Theorem~3.1]{A}. If $\prob(t_e=0)<p_c$ and $\E e^{\alpha t_e} <\infty$ for some $\alpha>0$, then there exist a constant $c>0$ such that almost surely,
	\begin{equation}
	\label{eq:alexander}
	(t-ct^{1/2} \log{t})\mathcal{B}\subseteq \bar{B}(t) \subseteq (t+ct^{1/2}\log{t})\mathcal{B} \quad \text{for all large $t$.}
	\end{equation}
	By counting the edges in the annulus $(t+2ct^{1/2}\log{t})\mathcal{B} \setminus (t- 2ct^{1/2}\log{t})\mathcal{B}$, one can then obtain for some $c_4>0$, almost surely, 
	\[
	\#\partial_\textnormal{e} B(t) \leq c_4 t^{d-1/2}\log{t} \quad \text{for all large }t.
	\]
	However, this type of bound should be far from optimal, because otherwise the boundary would occupy a positive fraction of the annulus, and this should not be true for most distributions. Therefore, a different method should be used to obtain a sharper bound. 
	
	In the physics literature, it is believed that the size of the boundary of first-passage-type growth clusters of volume $n$ should behave like $n^{\frac{d-1}{d}}$ (see for instance \cite{L, ZS}). Using the shape theorem, this corresponds to the relation $\#\partial_\textnormal{e} B(t) \sim t^{d-1}$. However, the only known rigorous result, which is proved in \cite{Bouch}, is an upper bound of the form $n^{1-\frac{1}{d(2d+5)+1}}$. Our main results below show that under a weak moment condition $\E Y<\infty$, where $Y$ is the minimum of $2d$ independent edge-weights, one almost surely has $\#\partial_\textnormal{e} B(t) \leq a t^{d-1}$ for most times $t$. However, under other conditions, the boundary may be larger, or infinite. Indeed, the combination of Theorems~\ref{thm:main} and \ref{thm:holes} shows that, roughly speaking, if $Y$ has exactly $1-\alpha$ moments $(\alpha>0)$ and a sufficiently regular distribution, then due to the presence of many small holes in $B(t)$, $\#\partial_\textnormal{e} B(t)$ is larger, of order $t^{d-1+\alpha}$. In contrast, for the exterior boundary (which does not count holes), we have a smooth bound $t^{d-1}$ regardless of the moment condition. All these results are under the assumption that there are not too many zero-weight edges; that is, $\mathbf{P}(t_e=0)<p_c$. If, on the other hand, $\mathbf{P}(t_e=0) \in (p_c,1)$, then one can argue that for all large $t$, one has $\#\partial_\textnormal{e} B(t) = \infty$ but $\#\partial_\textnormal{e}^{\textnormal{ext}} B(t)$ is bounded in $t$. The intermediate case, $\mathbf{P}(t_e=0)=p_c$, is more complicated because in two dimensions, even the growth rate of $B(t)$ depends on the distribution of $t_e$ \cite{DLW}, and in higher dimensions, the growth rate is unknown (and depends on whether there is an infinite cluster at the critical point in independent percolation, and this is a major open problem). For these reasons, we leave this critical case to further investigations.
	
	There are related Markovian growth models called the Eden model \cite{Eden} and the $1$-type Richardson model \cite{R}, and they are equivalent to certain FPP (site or bond) models with exponential weights. Using the memoryless property of the exponential distribution, one can prove that $\frac{\textnormal{d}}{\textnormal{d} t} \E\# B(t) = \E \#\partial_{\textnormal{e}} B(t)$, which implies $\int_0^t \E\#\partial_{\textnormal{e}} B(s)\;\textnormal{d}s \leq Ct^d$. That is, on average, $\E\#\partial_{\textnormal{e}} B(s) \sim s^{d-1}$.
	
	Throughout this article, we use $\Leb$ to denote the $1$-dimensional Lebesgue measure. For $a, b\in\R$, we define $a\wedge b = \min\{a,b\}$ and $a\vee b=\max\{a,b\}$. For $t \geq 0$, we define
	\begin{equation}
	\label{eq:box}
	S(t)=[-\lfloor t\rfloor, \lfloor t\rfloor]^d. 
	\end{equation}
	We write $\mathbf{e}_1$ for the first coordinate vector $(1, 0, \ldots, 0)$. Also, the symbols $C_i$, where $i$ is an integer, represent constants depending only on the dimension $d$ and the distribution of $t_e$. The same symbols $C_i$ will be used in different sections but they might possibly represent different numbers. 
	
	\subsection{Main results}
	\subsubsection{Rough times}
	Define 
	\[
	Y=\min\{t_1,\ldots,t_{2d}\}, 
	\]
	where $t_1,\ldots,t_{2d}$ are i.i.d. copies of $t_e$. For $a>0$ and $t>0$, we define sets of $a$-rough times as
	\[
	R_t(a) = \{s\in [0,t]: \#\partial_{\textnormal{e}} B(s)\geq as^{d-1}\E[Y\wedge s]\}
	\]
	and
	\[
	R^{\textnormal{ext}}_t(a) = \{s\in [0,t]: \#\partial^{\textnormal{ext}}_{\textnormal{e}} B(s)\geq as^{d-1}\}
	\]
	depending on which boundary we are discussing. Note that the definition of $R_t(a)$ includes an additional factor of $\mathbf{E}[Y \wedge s]$ in the lower bound, and its purpose is to allow for cases in which $\mathbf{E}Y = \infty$. Ignoring the term $\E[Y\wedge s]$ (assuming for the moment that this term is uniformly bounded in $s$), if one believes that $\#\partial_{\textnormal{e}} B(s)$ or $\#\partial^{\textnormal{ext}}_{\textnormal{e}} B(s)$ is of order $s^{d-1}$, then when $a$ is large, these sets represent times when the boundary is rough. Indeed, we will show that the upper density of the set of rough times is small when $a$ is large:
	\begin{thm}
		\label{thm:main}
		Suppose that $\prob(t_e=0)<p_c$.
		\begin{enumerate}
			\item[(a)] There exists $C>0$ such that almost surely,
			\[
			\limsup_{t\to\infty}\frac{\Leb(R_t(a))}{t} \leq \frac{C}{a}.
			\]
			\item[(b)] There exists $C>0$ such that almost surely,
			\[
			\limsup_{t\to\infty}\frac{\Leb(R^{\textnormal{ext}}_t(a))}{t} \leq \frac{C}{a}.
			\]
		\end{enumerate}
	\end{thm}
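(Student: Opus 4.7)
The plan is to reduce both parts to a time-integrated bound on the boundary, followed by a Markov-type averaging argument. I describe the approach for (a) in detail; part (b) is analogous.

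The central step is to establish that there exists a constant $C_1 > 0$ such that almost surely, for all sufficiently large $T$,
\[
\int_0^T \#\partial_{\textnormal{e}} B(s)\,\textnormal{d}s \leq C_1 T^d\, \mathbf{E}[Y \wedge T].
\]
Writing $T_x := T(0,x)$, the left-hand side can be rewritten as
\[
\sum_{x \in B(T)} \sum_{y \sim x:\, T_y > T_x} \bigl( (T_y - T_x) \wedge (T - T_x) \bigr),
\]
and the shape theorem gives $B(T) \subseteq S(C_0 T)$ almost surely for large $T$, restricting the sum to $O(T^d)$ vertices. The content of the target bound is that the expected contribution of each vertex is of order $\mathbf{E}[Y \wedge T]$, reflecting the fact that once $x$ is reached, the relevant timescale for changes in the boundary around $x$ is set by $Y_x := \min_{e \ni x} t_e \distrib Y$.

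This integral estimate is the main technical obstacle. The naive per-edge bound $T_y - T_x \leq t_{\{x,y\}}$ only gives $\mathbf{E}[t_e \wedge T]$ per edge, which may be infinite or much larger than $\mathbf{E}[Y \wedge T]$ in the heavy-tailed regime---precisely the regime in which the factor $\mathbf{E}[Y \wedge T]$ in the theorem statement is non-trivial. To sharpen this, a per-vertex argument is required, exploiting that once $x$ has joined $B$, the sequence of arrival times $T_{y_1} \leq T_{y_2} \leq \cdots$ of its neighbors is controlled by successive minimums over the remaining unexplored edge weights at $x$. Executing this carefully---in particular handling the dependencies introduced by the fact that $T_x$ itself depends on nearby edge weights---is where the real work lies.

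Given the integral estimate, the theorem follows by dyadic decomposition and a Markov-type inequality. Using $\mathbf{E}[Y \wedge 2s] \leq 2 \mathbf{E}[Y \wedge s]$ (immediate from $Y \wedge 2s \leq 2(Y \wedge s)$), on each dyadic interval $[2^k, 2^{k+1}]$ with $2^k \leq T$ one obtains
\[
\int_{2^k}^{2^{k+1}} \frac{\#\partial_{\textnormal{e}} B(s)}{s^{d-1}\, \mathbf{E}[Y \wedge s]}\,\textnormal{d}s \leq C \cdot 2^k,
\]
and summing gives $\int_1^T \tfrac{\#\partial_{\textnormal{e}} B(s)}{s^{d-1}\, \mathbf{E}[Y \wedge s]}\,\textnormal{d}s \leq C_2 T$. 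Since the integrand is at least $a$ on $R_T(a) \cap [1,T]$ by definition, this yields $a \cdot \Leb(R_T(a)) \leq C_2 T + O(1)$, which gives (a). Part (b) follows from the analogous bound $\int_0^T \#\partial^{\textnormal{ext}}_{\textnormal{e}} B(s)\,\textnormal{d}s \leq C_1' T^d$, which is expected to be easier because the exterior boundary ignores small finite holes in $B(s)$ and is confined by the shape theorem to a thin annulus around the limit shape boundary $\partial(s\mathcal{B})$, obviating the need for an $\mathbf{E}[Y \wedge s]$ factor.
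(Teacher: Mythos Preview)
Your overall architecture for (a) --- establish $\int_0^T \#\partial_{\textnormal{e}} B(s)\,\textnormal{d}s \leq C T^d\, \E[Y\wedge T]$ almost surely for large $T$, then extract the density bound by a dyadic Markov argument --- matches the paper; your dyadic step is the content of the paper's Regularity Lemma. But the step you correctly flag as ``where the real work lies'' is left unresolved, and the mechanism you propose (per-vertex, ``successive minimums'' of neighbor arrival times) does not deliver the bound: each late neighbor $y$ of $x$ contributes its own $(T_y-T_x)\wedge T \leq t_{\{x,y\}}\wedge T$ to the sum, and summing these over the neighbors of $x$ recovers only $\sum_{e\ni x} t_e\wedge T$, not $Y_x\wedge T$. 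The paper's resolution is different and stays per-edge: bound $T(x,x+\mathbf{e}_1)$ not by $t_{\{x,x+\mathbf{e}_1\}}$ but by $\tau_x := \min_{i\leq 2d} T(\gamma_i)$, where $\gamma_1,\ldots,\gamma_{2d}$ are fixed edge-disjoint short paths from $x$ to $x+\mathbf{e}_1$. The $T(\gamma_i)$ are independent, so $\tau_x$ is a minimum of $2d$ i.i.d.\ variables, and a lemma of Cox--Durrett gives $\E[\tau_0\wedge t]\leq C\,\E[Y\wedge t]$. Thinning to $5\Z^d$ makes the $\tau_x$ i.i.d., and Bernstein's inequality upgrades the expectation bound to the required almost-sure statement.

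For (b), your justification --- that $\partial^{\textnormal{ext}}_{\textnormal{e}} B(s)$ is confined by the shape theorem to a thin annulus around $\partial(s\mathcal{B})$ --- is not available under the sole hypothesis $\prob(t_e=0)<p_c$. Two-sided containment $(1-\varepsilon)t\mathcal{B}\subseteq \bar B(t)\subseteq (1+\varepsilon)t\mathcal{B}$ already requires $\E Y^d<\infty$, and a quantitative annulus requires exponential moments; with no moment assumption one only has $B(t)\subseteq S(Mt)$, so the exterior boundary could a priori fill a positive fraction of that box. The paper's argument is entirely different: by Tim\'ar's lemma, $\partial^{\textnormal{ext}} B(t)$ is a $*$-connected set enclosing $0$, and a Peierls count over such contours shows that for $\alpha$ large, almost surely for all large $t$ at least half the vertices of $\partial^{\textnormal{ext}} B(t)$ are incident only to edges with $t_e\leq\alpha$. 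Hence a fixed fraction of edges in $\partial^{\textnormal{ext}}_{\textnormal{e}} B(t)$ satisfy $t_e\leq\alpha$ and remain on the boundary for time at most $\alpha$; running the array argument on $\mathbf{1}_{\{e\in\partial^{\textnormal{ext}}_{\textnormal{e}} B(s),\, t_e\leq\alpha\}}$ gives $\int_0^T \#\partial^{\textnormal{ext}}_{\textnormal{e}} B(s)\,\textnormal{d}s \leq C T^d$ directly.
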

	
	\begin{rem}
		To understand the term $\E[Y\wedge t]$, let us consider the following cases:
		\begin{enumerate}
			\item If $\E Y<\infty$, then using $\E[Y\wedge t] \leq \E Y$, Theorem~\ref{thm:main}(a) says that $\#\partial_{\textnormal{e}} B(t)\leq a\E Y t^{d-1}$ for most $t$.
			\item If there exists a constant $C>0$ such that $\prob(Y\geq y)\leq C/y$ for all $y>0$, but $\E Y=\infty$, then
			\[
			\E[Y\wedge t] = \int_0^t \prob(Y\geq y)\;\textnormal{d}y \leq C'\log{t}
			\]
			when $t$ is large. In this case, Theorem~\ref{thm:main}(a) implies that $\#\partial_{\textnormal{e}} B(t)\leq at^{d-1}\log{t}$ for most $t$.
			
			\item Likewise, if we assume $\prob(Y\geq y)\leq C/y^{1-\alpha}$ for some $C>0$, $\alpha\in(0,1)$ and for all $y>0$, a similar calculation gives $\#\partial_{\textnormal{e}} B(t)\leq at^{d-1+\alpha}$ for most $t$.
		\end{enumerate}
	\end{rem}
	
	\subsubsection{Lower bound}	
	Here we present lower bounds for $\#\partial_{\textnormal{e}} B(t)$.
	
	\begin{thm}
		\label{thm:holes}
		Suppose that $\prob(t_e=0)<p_c$ and let $F_Y$ be the distribution function of $Y$. There exists $C>0$ such that almost surely,
		\[
		\#\partial_{\textnormal{e}} B(t) \geq C\left[(1-F_Y(t))\vee \frac{1}{t}\right]t^d \quad \text{for all large }t.
		\]
	\end{thm}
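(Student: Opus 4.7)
The $C t^{d-1}$ piece (the $\vee 1/t$ term) follows immediately from \eqref{eq:volume}: the shape theorem gives $\# B(t) \geq c_1 t^d$ almost surely for large $t$, and isoperimetry on $\Z^d$ then yields $\# \partial_{\mathrm{e}} B(t) \geq C t^{d-1}$. I focus on the substantive $C (1 - F_Y(t)) t^d$ bound, which is informative only in the heavy-tail regime $(1 - F_Y(t)) t \geq 1$.

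Call $x \in \Z^d$ $t$-\emph{isolated} if $Y_x := \min_{e \ni x} t_e > t$. Any such $x$ lies outside $B(t)$, and contributes a boundary edge to $\partial_{\mathrm{e}} B(t)$ whenever one of its neighbors lies in $B(t)$. The proof counts such contributions in the deep interior. The key observation is that on the event of $t$-isolation at $x$, no path of passage time at most $t$ from $0$ can use an edge at $x$, so for any neighbor $y$ of $x$, $y \in B(t)$ if and only if $T^{\ast}_x(0, y) \leq t$, where $T^{\ast}_x$ is the passage time in the graph obtained by deleting all edges incident to $x$. This last event is independent of the weights at $x$.

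Setting $\mathcal{I}^{\mathrm{good}}(t) := \{x \in \tfrac{1}{4} t \B \cap \Z^d : x \text{ is $t$-isolated and some neighbor of } x \text{ lies in } B(t)\}$, the independence above gives
\[
\E |\mathcal{I}^{\mathrm{good}}(t)| \;=\; (1 - F_Y(t)) \sum_{x \in \tfrac{1}{4} t \B \cap \Z^d} \prob\bigl(\exists y \sim x : T^{\ast}_x(0, y) \leq t\bigr).
\]
Since deleting the finitely many edges at $x$ preserves the limit shape, a shape-theorem argument on the modified graph should yield a uniform lower bound $\prob(\exists y \sim x : T^{\ast}_x(0, y) \leq t) \geq c_2 > 0$ for $x$ in the deep interior and large $t$, producing $\E |\mathcal{I}^{\mathrm{good}}(t)| \geq c_3 (1 - F_Y(t)) t^d$. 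On the sublattice $(3 \Z)^d$, distinct $x$'s have disjoint edge sets, so the indicators $\mathbf{1}\{Y_x > t\}$ are i.i.d.\ Bernoulli($1 - F_Y(t)$); a Chebyshev-type concentration estimate combined with Borel--Cantelli along $t_n = 2^n$, together with monotonicity in $t$ of ``isolation at level $t$'', promotes the expectation bound to the almost-sure statement $|\mathcal{I}^{\mathrm{good}}(t)| \geq c_4 (1 - F_Y(t)) t^d$ for all large $t$.

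The main obstacle is the uniform lower bound $c_2$, without moment assumptions on $t_e$. The weak shape theorem \eqref{eq:shape_thm} only gives average-over-$x$ control on $\prob(T^{\ast}_x(0, y) \leq t)$; pointwise it may fail if the original geodesic to $y$ runs through $x$ and rerouting around $x$ is costly in the heavy-tail regime. The natural remedy is to exploit the $2d - 1$ alternative neighbors $z$ of $y$ other than $x$ via the triangle inequality $T^{\ast}_x(0, y) \leq T^{\ast}_x(0, z) + t_{\{y, z\}}$, using the independence of $t_{\{y, z\}}$ from the isolation event at $x$ and the observation that a deep hole $y$ must itself be nearly isolated --- for each neighbor $z$ of $y$ with $T(0, z) \leq (1 - c) t$ (a condition failing on only $o(t^d)$ vertices in $\tfrac{1}{3} t \B$ by the shape theorem at time $(1-c)t$), the hole condition forces $t_{\{y, z\}} > c t$, so that $Y_y \geq c t$. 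Balancing the resulting ``bad'' count against the lower bound by careful choice of the box fraction, sublattice density, and $c$ is the delicate part of the argument.
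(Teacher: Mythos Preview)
Your framework is the paper's: count vertices $x$ whose incident edges are all heavy (so $x\notin B(t)$) but with a neighbor in $B(t)$; decouple the isolation event at $x$ from the reachability of the neighbor in the $x$-deleted graph; concentrate and run Borel--Cantelli along a geometric sequence. The $t^{d-1}$ floor via \eqref{eq:volume} is also how the paper handles the $\vee\,1/t$ term.

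The genuine gap is precisely the one you flag and do not close: the uniform bound $\prob(\exists\, y\sim x:T^{*}_x(0,y)\le t)\ge c_2$ with no moment hypothesis. Your remedy writes $T^{*}_x(0,y)\le T^{*}_x(0,z)+t_{\{y,z\}}$ and then invokes the weak shape theorem to control $T(0,z)$ for most $z$; but $T(0,z)\le(1-c)t$ says nothing about $T^{*}_x(0,z)$, since the geodesic to $z$ may pass through $x$ and the reroute cost is uncontrolled in the heavy-tail regime. The ``nearly isolated $y$'' observation is circular for the same reason. The paper resolves this with a percolation coupling: declare $e$ open if $t_e\le M$ with $\prob(t_e\le M)>p_c$, and require both that $y=x-\mathbf e_1$ lie in the infinite open cluster $\mathsf C$ and that all edges among $\{z:\|z-x\|_\infty=1\}$ be open (a ``shield'' around $x$). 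The shield forces $y\in\mathsf C$ via an open path avoiding $x$, and Antal--Pisztora (Lemma~\ref{lem:upperbound}) then yields $T^{*}_x(0,y)\le 4D_4M\|y\|_\infty\le t$ for $x$ in a suitable annulus, deterministically on that event. The density of such shielded-cluster sites is positive by FKG and the ergodic theorem. This is the missing idea.

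There is a secondary gap in your concentration step. The indicators $\mathbf 1\{Y_x>t\}$ are i.i.d.\ on $3\Z^d$, but the full indicators $\mathbf 1\{x\in\mathcal I^{\mathrm{good}}(t)\}$ are not: the factor $\mathbf 1\{\exists\,y\sim x:T^{*}_x(0,y)\le t\}$ depends on edges incident to the other sublattice points $x'$, so Chebyshev on the sum needs a covariance bound you have not supplied. The paper handles this by first establishing almost surely (via the ergodic theorem) that the set $V_n$ of shielded-cluster sites has cardinality $\ge cR^{nd}$, and then proving a shielding lemma (Lemma~\ref{lem: independence}) showing that the isolation count $N_{V,n}$ is independent of the event $\{V_n=V\}$; conditionally one is left with a genuine i.i.d.\ Bernoulli sum, and Bernstein applies.
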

	
	\begin{rem}
		Similarly, to understand the term $1-F_Y(t)$, let us consider the following cases.
		\begin{enumerate}
			\item If $\E Y<\infty$, then by Markov's inequality, the order of $1-F_Y(t)$ is no larger than $1/t$ as $t\to\infty$, which in particular implies that $\#\partial_{\textnormal{e}} B(t)\geq Ct^{d-1}$.  This coincides with the upper bound from Theorem~\ref{thm:main}.
			\item If $\prob(Y\geq y)\geq C/y^{1-\alpha}$ for some $C>0$, $\alpha\in[0,1)$ and for all large $y>0$, then $\#\partial_{\textnormal{e}} B(t) \geq C't^{d-1+\alpha}$. In particular, if $C/y^{1-\alpha}\leq \prob(Y\geq y)\leq C'/y^{1-\alpha}$, then the upper and lower bounds for $\#\partial_\textnormal{e} B(t)$ match if $\alpha > 0$, and do not match when $\alpha =0$ because of a $\log$ factor.
		\end{enumerate}
	\end{rem}
	The previous two theorems show that under the condition $\mathbf{P}(t_e=0)<p_c$, one has upper and lower bounds of the form
	\[
	\left[ t(1-F_Y(t)) \vee 1\right] t^{d-1} \lesssim \#\partial_{\textnormal{e}} B(t) \lesssim \mathbf{E}[Y \wedge t] t^{d-1}.
	\]
	It is natural to ask how different these upper and lower bounds can be. From the above examples, we see that their ratio can be at least $\log t$. Below we will see that it can be made arbitrarily large (up to order $t$) infinitely often by choosing very irregular tails for the distribution of $t_e$. Yet for any distribution, we can also show that the ratio is at most $\log t$ for an unbounded set of $t$. To be precise, we claim the following:
	\begin{enumerate}
		\item[(a)] The ratio
		\begin{equation}\label{eq: nachos_bellegrande}
		\frac{\mathbf{E}[Y \wedge t]}{t(1-F_Y(t)) \vee 1}=o(t)
		\end{equation}
		as $t\to\infty$, but it can be made arbitrarily close to $t$ infinitely often. For instance, for any $k\geq 1$, we can find distributions such that the ratio is at least $Ct/\log\log\cdots\log{t}$ for an unbounded set of $t$, where we compose the $\log$ function $k$ times.
		\item[(b)] There is a constant $C>0$ such that for infinitely many $n$,
		\[
		\frac{\E [Y\wedge 2^n]}{2^n (1-F_Y(2^n)) \vee 1} \leq C\log(2^n).
		\]
	\end{enumerate}
	\begin{proof}[Proof of Claim]
		\begin{enumerate}
			\item[(a)] Note that by the bounded convergence theorem, as $t \to \infty$,
			\[
			0 \leq \frac{1}{t}\frac{\mathbf{E}[Y \wedge t]}{t(1-F_Y(t)) \vee 1} \leq \frac{1}{t}\mathbf{E}[Y \wedge t] = \E\left[\frac{Y}{t}\wedge 1\right] \to 0.
			\]
			
			For the second part, for simplicity we only show the case $k=1$ in detail. We inductively define a sequence $x_1=3$ and $x_{n+1}=x_n^{x_n}$ for all $n\in\mathbb{N}$. We then define a distribution for $t_e$ satisfying
			\[
			\prob(t_e > t) = (\log{x_n})^{-1/2d}
			\]
			if $t\in [x_{n-1}, x_n)$ and $n>1$ (and define $\prob(t_e>t)=1$ if $t<3$). Then for $t=x_{n-1}$ and $n>2$,
			\[
			t(1-F_Y(t)) = t\prob(Y>t) = (x_{n-1})(\log{x_n})^{-1} = (\log{x_{n-1}})^{-1} \leq 1,
			\]
			and
			\begin{align*}
			\E[Y\wedge t] \geq (x_{n-1}-x_{n-2})(\log{x_{n-1}})^{-1} & \geq \frac{1}{2} x_{n-1} (\log x_{n-1})^{-1}\\
			& = \frac{t}{2\log{t}}.
			\end{align*}
			So
			\[
			\frac{\mathbf{E}[Y \wedge t]}{t(1-F_Y(t)) \vee 1} = \E[Y \wedge t] \geq \frac{t}{2\log{t}}.
			\]
			
			Similarly, one can construct a distribution such that  for an unbounded set of $t$,
			\[
			\frac{\mathbf{E}[Y \wedge t]}{t(1-F_Y(t)) \vee 1} \geq \frac{Ct}{\log\log\cdots\log{t}}.
			\] 
			This can be done by considering a sequence of $x_n$'s that increases rapidly enough and replacing $\log{x_n}$ by $\log\log\cdots\log{x_n}$ in the above discussion.
			
			\item[(b)] There are two cases: either the sequence $(2^n\prob(Y > 2^n))$ is unbounded, or it is bounded. In the first case,
			\begin{align*}
			\E [Y\wedge 2^n] &\leq 1+ \sum_{k=1}^n \int_{2^{k-1}}^{2^
				k} \prob(Y > t) \;\text{d}t\\
			&\leq 1+ \sum_{k=1}^n 2^{k-1}\prob(Y > 2^{k-1}).
			\end{align*}
			Since the sequence $(2^n\prob(Y> 2^n))$ is unbounded, we can find infinitely many $n$ such that $2^n\prob(Y > 2^n) > 2^k \prob(Y > 2^k)$ for all $k=0,1,\ldots, n-1$. For all such large $n$, we have
			\[
			\E [Y\wedge 2^n] < 1+ n 2^n\prob(Y > 2^n) \leq 2\log(2^n) [2^n\prob(Y > 2^n)\vee 1].
			\]
			In the second case,
			\begin{align*}
			\E [Y\wedge 2^n] &\leq 1+ \sum_{k=1}^n 2^{k-1}\prob(Y > 2^{k-1})\\
			&\leq Cn\\
			&\leq C\log(2^n)[2^n\prob(Y > 2^n)\vee 1].
			\end{align*}
		\end{enumerate}
	\end{proof}
	
	Theorems~\ref{thm:main}(b) states that the edge exterior boundary of $B(t)$ is always small, while for certain heavy-tailed edge-weight distributions, Theorem~\ref{thm:holes} states that the full edge boundary is large. This means that there must be holes in $B(t)$. These holes cannot be too big, as one can argue by lattice animal arguments, so there must be many small holes. In fact, our proof of Theorem~\ref{thm:holes} shows that holes of size $1$ contribute a positive fraction to the full boundary in many low moment cases. It would be interesting to formally study the topology of $B(t)$ and its holes. 
	
	Last, we remark that analogous statements will hold if we replace $\#\partial_{\textnormal{e}} B(t)$ and $\#\partial^{\textnormal{ext}}_{\textnormal{e}}B(t)$ by $\E\#\partial_{\textnormal{e}} B(t)$ and $\E\#\partial^{\textnormal{ext}}_{\textnormal{e}}B(t)$ in Theorems~\ref{thm:main} and \ref{thm:holes}. The proofs are similar to those of Theorems~\ref{thm:main} and \ref{thm:holes}, and so we do not include them. 
	
	\subsubsection{Uniform curvature}
	We can even obtain that $\#\partial_{\textnormal{e}} B(t)$ is at most of order $(\log{t})^Ct^{d-1}$ for some $C>0$ in certain cases. Unfortunately, we will need to assume Newman's ``uniform curvature condition'' \cite{newman} which, although it is expected to be true for most edge-weight distributions, is unproved. For its statement, let $g$ be the norm on $\mathbb{R}^d$ whose unit ball is $\mathcal{B}$.
	
	\begin{df}\label{def: curvature}
		We say that $\mathcal{B}$ satisfies the uniform curvature condition if there are constants $C>0$, $\eta>1$ such that for all $z=\lambda z_1+(1-\lambda)z_2$ with $g(z_1)=g(z_2)=1$ and $\lambda\in[0,1]$,
		\[
		1-g(z) \geq C[\min\{g(z-z_1), g(z-z_2)\}]^\eta.
		\]
	\end{df}
	The following theorem states that if we assume that $\mathcal{B}$ satisfies the uniform curvature condition and that $t_e$ has finite exponential moments, then $\#\partial_{\textnormal{e}} B(t)$ is at most of order $(\log{t})^C t^{d-1}$ almost surely.
	
	\begin{thm}
		\label{thm:curv}
		Suppose that $\prob(t_e=0)<p_c$, $\E e^{\alpha t_e}<\infty$ for some $\alpha>0$ and $\mathcal{B}$ satisfies the uniform curvature condition. Then there exists $C>0$ such that almost surely for all large $t$, 
		\[
		\#\partial_{\textnormal{e}} B(t) \leq C (\log{t})^{C}t^{d-1}.
		\]
	\end{thm}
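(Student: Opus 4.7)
The strategy is to combine Theorem~\ref{thm:main}(a) with a short-time continuity estimate on $B(t)$ provided by the uniform curvature condition. The elementary identity
\[
\#\partial_{\textnormal{e}} B(t) \leq \#\partial_{\textnormal{e}} B(s) + 2d \cdot \#(B(t) \setminus B(s)), \qquad s \leq t,
\]
holds because each vertex addition to the wet region changes $\#\partial_{\textnormal{e}} B$ by at most $2d$. It reduces the goal to finding, for each large $t$, a time $s$ just below $t$ satisfying both (i)~$\#\partial_{\textnormal{e}} B(s) \leq (\log t)^C t^{d-1}$ and (ii)~$\#(B(t) \setminus B(s)) \leq (\log t)^C t^{d-1}$.

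For (i), the proof of Theorem~\ref{thm:main}(a) in fact yields the almost sure integral bound $\int_0^t \#\partial_{\textnormal{e}} B(r)\,\textnormal{d}r \leq C t^d$ for all large $t$ (using that $\mathbf{E} Y < \infty$ under the exponential moment hypothesis). Markov's inequality with threshold $(\log t)^C t^{d-1}$ then shows that in any interval of length $\delta$ of order $t/(\log t)^C$ there is a good $s$ satisfying (i). Note that $\delta$ is thereby forced to be of polynomial order in $t$, so the shell estimate (ii) must hold over relatively long time windows. Alexander's bound \eqref{eq:alexander} supplies only $\#(B(t) \setminus B(s)) \leq C(t - s + t^{1/2}\log t)\,t^{d-1}$; with $t - s$ possibly of order $t/(\log t)^C$ and the unavoidable Alexander fluctuation $t^{1/2}\log t$, both contributions exceed the target polylog bound. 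The uniform curvature condition from Definition~\ref{def: curvature} must therefore be invoked to obtain a genuinely sharper shell estimate.

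The main obstacle is proving (ii). Uniform curvature provides quantitative strict convexity of $\mathcal{B}$ and should translate into much sharper local concentration of $T(0, x)$ around $g(x)$ near $t \partial \mathcal{B}$ than the worst-case Alexander fluctuation. A natural approach partitions a neighborhood of $t\partial\mathcal{B}$ into $O(t^{d-1})$ radial tubes of unit cross-section and shows, using the curvature modulus of Definition~\ref{def: curvature} together with the exponential tail on $t_e$, that each tube contributes at most $(\log t)^C$ vertices to $B(t) \setminus B(s)$; summing over tubes then gives the needed bound. A Borel--Cantelli argument along a polynomial sequence of times finally upgrades the per-$t$ bound to an almost sure statement valid for all large $t$.
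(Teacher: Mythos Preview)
Your strategy has a genuine gap that cannot be closed. The integral bound from Theorem~\ref{thm:main}(a), namely $\int_0^t \#\partial_{\textnormal{e}} B(r)\,\textnormal{d}r \leq Ct^d$, only guarantees a time $s \in [t-\delta, t]$ with $\#\partial_{\textnormal{e}} B(s) \leq (\log t)^{C} t^{d-1}$ provided $\delta \geq C' t/(\log t)^{C}$. But for such $\delta$, the shape theorem (indeed already Alexander's estimate \eqref{eq:alexander}) forces
\[
\#(B(t)\setminus B(s)) = \#B(t) - \#B(s) = \Vol(\mathcal{B})(t^d - s^d) + O(t^{d-1/2}\log t) \asymp \frac{t^{d}}{(\log t)^{C}},
\]
which is far larger than the target $(\log t)^{C} t^{d-1}$. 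No curvature argument can shrink this: the shell genuinely contains that many vertices. Your tube argument for (ii) therefore cannot hold as stated; with $O(t^{d-1})$ tubes and $\asymp t^d/(\log t)^{C}$ vertices in the shell, the average tube must contain $\asymp t/(\log t)^{C}$ vertices of $B(t)\setminus B(s)$, not $(\log t)^{C}$. More generally, optimizing the trade-off between the threshold $A$ and the window $\delta$ (Markov gives $A\delta \gtrsim t$) and adding the shell volume $\asymp \delta t^{d-1}$ recovers only a bound of order $t^{d-1/2}$, i.e.\ nothing beyond Alexander.

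The paper's proof bypasses Theorem~\ref{thm:main}(a) entirely. The uniform curvature condition is used, via Newman's control of geodesic wandering, to prove a Busemann-type inequality: for unit $x$ and $k \geq \ell$, with high probability $T(0,kx) - T(0,\ell x) \geq C_1(k-\ell)$. Since any two edges in $\partial_{\textnormal{e}} B(t)$ have endpoints whose passage times differ by $O(\log t)$ (using the exponential moment to bound all relevant edge weights by $\lambda\log t$), this forces any two such edges lying in a common radial tube to be within distance $(\log t)^{C}$ of one another. Covering an annular neighborhood of $t\partial\mathcal{B}$ by $O(t^{d-1})$ tubes and summing gives the result directly. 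This is precisely the tube argument you sketch for (ii), but applied to $\partial_{\textnormal{e}} B(t)$ itself rather than to a shell $B(t)\setminus B(s)$; once one has it, the detour through a good time $s$ is unnecessary.
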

	
	It is not known if there exists a distribution such that $\mathcal{B}$ satisfies the uniform curvature condition. However, it is believed that this condition holds for $(t_e)$ having continuous distribution. See \cite[Section~2.8]{survey} for further discussion.

	\subsection{Sketch of proofs}
	\subsubsection{Theorem~\ref{thm:main}}
	To show Theorem~\ref{thm:main}(a), the idea is consider the amount of time $s \in [0,t]$ that an edge $e$ is on the boundary $\partial_\textnormal{e} B(s)$. It is not difficult to see that this amount of time is bounded above by $T(x,y) \wedge t$, if $e = \{x,y\}$. If $\mathbf{E}T(x,y) < \infty$, then on average, each edge is on the boundary for a constant amount of time. In this case, the ball $B(t)$ will grow by at least order $\#\partial_\textnormal{e} B(t)$ number of edges in a constant time. This means that if the boundary is too large for too long, then the growth of $B(t)$ will be so large as to violate the shape theorem.
	
	Formally, we consider the indicator $\mathbf{1}_{\{e\in\partial_{\textnormal{e}} B(s)\}}$, where $e\in\mathcal{E}^d$ and $s\in [0,t]$. If we fix an edge $e$ and integrate over $s$, we obtain the amount of time that $e = \{x,y\}$ stays on the boundary, which is bounded by $T(x,y) \wedge t$. Now, when we further sum over the edges in a box $[-t,t]^d$, we obtain an upper bound $\sum_{\{x,y\}} T(x,y)\wedge t$. Since there are $Ct^d$ many edges in the box $[-t,t]^d$, and the $T(x,y)$'s can be well-controlled by weakly-dependent random variables with the same tail properties as those of $Y$, we use Lemma~\ref{lem:truncation} to conclude that with high probability,
	\[
	\sum_{e \in [-t,t]^d} \int_0^t \mathbf{1}_{\{e \in \partial_\textnormal{e} B(s)\}}~\text{d}s \leq Ct^d\E[Y\wedge t].
	\]

	If we instead fix $s$ and sum over the edges first, we obtain $\#\partial_\textnormal{e} B(s)$, on the high probability event that $\partial_\textnormal{e} B(s) \subseteq [-t,t]^d$. Applying the above inequality, we obtain with high probability
	\[
	\frac{1}{t} \int_0^t \#\partial_\textnormal{e} B(s)~\text{d}s \leq Ct^{d-1} \mathbf{E}[Y\wedge t].
	\]
	In other words, the time-average of $\#\partial_\textnormal{e}B(s)$ is at most of order $s^{d-1} \mathbf{E}[Y\wedge s]$. Applying Lemma~\ref{lem:rough time} (the regularity lemma) will convert this integral inequality to the desired bound on the size of the set of rough times.
	
	For the edge exterior boundary, we are able to remove the term $\E[Y\wedge t]$ because of the following two facts:
	\begin{itemize}
			\item the exterior boundary of $B(t)$ forms a ``closed surface,'' and the number of such surfaces with cardinality $n$ is at most $e^{Cn}$;
			\item if $\alpha>0$ is large then for a deterministic such closed surface, the probability that more than a fixed constant fraction of its edges have edge-weights $>\alpha$ is at most $e^{-2Cn}$.
	\end{itemize}
	These will imply that when $t$ is large, at least a fixed constant fraction of the edges in $ \partial_\textnormal{e}^\textnormal {ext} B(t)$ have edge-weights at most $\alpha$. As in the case of $\partial_\textnormal{e} B(t)$, this means that such edges will be on the boundary for at most a constant amount of time. We conclude with an argument similar to the previous case (replacing $\mathbf{1}_{\{e \in \partial_\textnormal{e}B(s)\}}$ by $\mathbf{1}_{\{e\in\partial^{\textnormal{ext}}_{\textnormal{e}} B(s), t_e\leq\alpha\}}$).
	
	
	\subsubsection{Theorem~\ref{thm:holes}}
	To find a lower bound for $\#\partial_{\textnormal{e}} B(t)$, fix $\delta\in (0,1)$ and observe the following: if (a) all the edges adjacent to a vertex $x$ have edge-weights $> (1-\delta)t$, (b) $T(0,x-\mathbf{e}_1)\leq t$ and (c) $T(0,z)\geq \delta t$ for all $z$ such that $\|z-x\|_1=1$, then $\{x-\mathbf{e}_1,x\} \in \partial_{\textnormal{e}} B(t)$, since all the paths from $0$ to $x$ have passage time $>t$ but $T(0,x-\mathbf{e}_1)\leq t$. Such an edge $\{x-\mathbf{e}_1,x\}$ is surrounded by edges of high weight ($>(1-\delta)t$), but is adjacent to a vertex in $B(t)$. Therefore, $\#\partial_{\textnormal{e}} B(t)$ will be bounded below by the number of vertices $x$ satisfying all the above three conditions.
	
	Almost surely, when $\|x\|_1 \in [ct/2,ct]$ with $t$ large and $c$ fixed (but $\delta$ small), (b) and (c) are true. (c) can be shown using \cite[Proposition~5.8]{kesten2} (see also Lemma~\ref{lem:largedeviation}). (b) can easily be shown to hold if $t_e$ satisfies certain moment conditions, for instance having a finite exponential moment, but this is stronger than what we assume. We will instead use a coupling with Bernoulli bond percolation. Define an edge $e$ to be open if $t_e\leq M$, where $M$ is sufficiently large to ensure that $\prob(t_e\leq M) > p_c$. It is known (from Antal-Pisztora \cite{AP}) that in supercritical bond percolation, the distance in the infinite open cluster $\mathsf{C}$ is bounded above by a constant times the $\ell^\infty$-distance with high probability. Using this, one can show that if $x\in\mathsf{C}$ and $\|x\|_\infty$ is sufficiently large, then $T(0,x)\leq CM\|x\|_\infty$. Therefore (b) holds with high probability so long as $c$ is small and $x-\mathbf{e}_1 \in \mathsf{C}$. See Lemma~\ref{lem:upperbound} for more details. 
	
	Therefore, it suffices to lower bound the number of vertices $x$ with $x-\mathbf{e}_1 \in \mathsf{C}$ that satisfy (a). By the ergodic theorem, there is a positive density of $x \in \mathbb{Z}^d$ such that $x-\mathbf{e}_1 \in \mathsf{C}$. If we take such an $x$ and artificially raise the edge-weights of edges incident to $x$ to be larger than $(1-\delta)t$, then, so long as $x-\mathbf{e}_1$ is still in $\mathsf{C}$ after the modification, we will have a vertex $x$ with the required properties. The total probability cost of this operation is of order $1-F_Y(t)$, and so the expected number of such vertices in a box $[-t,t]^d$ should be of order $t^d(1-F_Y(t))$. If we combine this bound with the lower bound of \eqref{eq:volume}, we obtain the desired result. To rigorously perform this modification, we use a shielding lemma, which is given as Lemma~\ref{lem: independence}, and to move from the expected number of such vertices to an almost sure bound, we apply Bernstein's inequality, stated as Theorem~\ref{thm:bernstein}.

	
	\subsubsection{Theorem~\ref{thm:curv}}
	We would like to show that $\#\partial_{\textnormal{e}} B(t) \leq C (\log{t})^{C}t^{d-1}$ under the uniform curvature assumption. The idea is to cover $B(t)$ by at most order $t^{d-1}$ many sectors of volume order $t$, and show that each sector can contain at most $(\log{t})^C$ many edges from $\partial_e B(t)$. 
	
	To estimate the number of edges in a sector that are on $\partial_e B(t)$, note that if $e = \{u,v\}$ is in $\partial_e B(t)$ with $u \in B(t)$, then
	\[
	t < T(0,v) \leq T(0,u) + t_e \leq t+t_e.
	\]
	Under our exponential moment condition, with high probability, all edges in $\partial_eB(t)$ can be shown to have weight at most $(\log t)^C$, so we obtain $|T(0,u) - t| \leq (\log t)^C$. If $f = \{w,z\}$ is another edge in $\partial_e B(t)$ with $w \in B(t)$, then
	\[
	|T(0,u) - T(0,w)| \leq 2(\log t)^C.
	\]
	In other words, the passage times from the origin to endpoints of different edges on the boundary must be within a power of $\log t$ of each other.
	
	Because of the small aperture of our sectors, if there are edges $e,f$ in one sector in $\partial_e B(t)$, then they lie close to some ray of the form $\{sx : s \geq 0\}$, where $x$ is a unit vector. Therefore we can find $k \geq \ell$ such that $kx$ is close to $e$ and $\ell x$ is close to $f$, and $|T(0,\ell x) - T(0,kx)| \geq c(\log t)^C$. However, in Proposition~\ref{prop: unif_curv}, we prove that there is a constant $C>0$ such that for any $x\in \R^d$ with $\|x\|_2=1$ and for any $k \geq \ell$, one has with high probability
	\begin{equation}
	\label{eq: busemann}
	T(0,kx) - T(0,\ell x) \geq C(k-\ell).
	\end{equation}
	This inequality implies that our $k$ and $\ell$ above must be at most order $(\log t)^C$ distance from each other. In other words, the intersection of $\partial_eB(t)$ with the sector associated to the ray has size at most order $(\log t)^C$, and this would complete the proof.

%
%
	
	
	To show \eqref{eq: busemann} holds with high probability, we use techniques developed by Newman to control geodesic (optimal path) ``wandering'' under the uniform curvature assumption. With high probability, the optimal path from $kx$ to $0$ can be shown to come within distance $(k-\ell)^c$ of $\ell x$, where $c < 1$. (See \eqref{eq: taco_2}, where $M=(k-\ell)/2$, and Figure~\ref{fig: fig_2}.) If $y$ is a point of this path that is close to $\ell x$, then
	\[
	T(0,kx)-T(0,\ell x) \geq T(kx,y) - T(\ell x,y) \ge C\left[ (k-\ell) - (k-\ell)^c\right] \geq C(k-\ell).
	\]


	\section{Preliminary results}
	The first tool we will need is the Cox-Durrett shape theorem \cite{CD}, which is stronger than \eqref{eq:shape_thm}.
	\begin{thm}[Shape theorem]
		Suppose that $\prob(t_e=0)<p_c$ and $\E Y^d<\infty$. There exists a nonrandom, compact, convex set $\mathcal{B}\subseteq\R^d$ with nonempty interior, such that for all $\varepsilon>0$, with probability $1$,
		\[
		(1-\varepsilon)\mathcal{B} \subseteq \frac{\bar{B}(t)}{t} \subseteq (1+\varepsilon)\mathcal{B} \quad \text{for all large } t.
		\]
	\end{thm}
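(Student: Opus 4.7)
The plan is to follow the classical Cox-Durrett approach: first construct a deterministic norm $\mu$ on $\R^d$ as the almost-sure limit of $T(0,nx)/n$, then define $\mathcal{B}=\{x\in\R^d:\mu(x)\leq 1\}$, and finally upgrade the resulting pointwise convergence to the uniform two-sided inclusion in the statement. The main technical difficulty, and the reason the assumption $\E Y^d<\infty$ appears rather than an assumption directly on $t_e$, is controlling $T(x,y)$ in the absence of any moment on $t_e$ itself.

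The first substantive step is to show $\E T(0,\mathbf{e}_1)<\infty$. Here I would use the ``minimum-of-$2d$ edges'' trick: rather than traversing the single edge $\{0,\mathbf{e}_1\}$, one dominates $T(0,\mathbf{e}_1)$ by choosing the cheapest among a family of roughly $n^{d-1}$ nearly-disjoint detour paths of length of order $n$ through a box of side $n$, each of whose edges contributes a weight no worse than a variable of type $Y$. A maximal-inequality bound over such a family, keyed on $\E Y^d<\infty$, then gives $T(0,\mathbf{e}_1)\in L^1$. With this in hand, Kingman's subadditive ergodic theorem applies to the subadditive stationary process $n\mapsto T(0,nx)$ for each $x\in\Z^d$, producing a deterministic limit $\mu(x)$. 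The function $\mu$ is nonnegative, homogeneous of degree one, subadditive, and invariant under the lattice symmetries on $\Z^d$; extending first to $\mathbb{Q}^d$ by homogeneity and then to $\R^d$ by continuity (using the coordinatewise bound of Step 1) yields a seminorm on $\R^d$.

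To see that $\mu$ is a genuine norm, so that $\mathcal{B}=\{\mu\leq 1\}$ is compact with nonempty interior and inherits the required symmetries, I would argue $\mu(\mathbf{e}_1)>0$ by contradiction: $\mu(\mathbf{e}_1)=0$ would force, with positive probability, the existence of arbitrarily long self-avoiding paths of total weight $o(n)$ from $0$, from which a standard extraction argument produces an infinite open cluster in the Bernoulli percolation where $e$ is open iff $t_e=0$. The hypothesis $\prob(t_e=0)<p_c$ forbids this. Convexity of $\mathcal{B}$ follows from the subadditivity and homogeneity of $\mu$, and the symmetries of $\mathcal{B}$ are inherited from those of $\mu$ on $\Z^d$.

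Finally, to upgrade the pointwise statement $T(0,nx)/n\to\mu(x)$ to the uniform inclusion $(1-\e)\mathcal{B}\subseteq\bar{B}(t)/t\subseteq(1+\e)\mathcal{B}$, I would fix $\e>0$, cover $\partial\mathcal{B}$ by finitely many rational directions $x_1,\dots,x_N$ at a mutual spacing smaller than some $\delta=\delta(\e)$, and work on the full-probability event on which $T(0,tx_i)/t\to\mu(x_i)$ for every $i$. The remaining task is to control $T(0,y)-T(0,tx_i)$ uniformly for $y$ in a ball of radius $\delta t$ around $tx_i$, which reduces, by translation invariance, to the maximal-type estimate
\[
\lim_{\delta\to 0}\,\limsup_{t\to\infty}\,\max_{\|y\|_\infty\leq \delta t}\frac{T(0,y)}{t}=0\quad\text{almost surely}.
\]
This is again proved by Borel-Cantelli using the moment assumption $\E Y^d<\infty$: the maximum is taken over roughly $(\delta t)^d$ lattice points, and the Step 1 detour construction controls each individual term. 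The hard part is precisely this pair of maximal inequalities in Steps 1 and 4; both exploit the fact that from any vertex there are $2d$ outgoing edges, so the minimum weight $Y$ is much better behaved than $t_e$ itself, and the exponent $d$ in the moment hypothesis is exactly what is needed to union-bound over shells of size $t^{d-1}$.
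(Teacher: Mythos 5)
The paper offers no proof of this statement at all: it is quoted directly from Cox--Durrett (with Kesten's extension), so your outline can only be compared with the standard literature argument, which it indeed follows in broad strokes (Kingman along lattice directions via the $2d$ edge-disjoint detour bound, positivity of the norm from $\prob(t_e=0)<p_c$, then a filling-in step). However, two of your steps are justified by arguments that would fail as written. First, the positivity of $\mu$: the deduction that $\mu(\mathbf{e}_1)=0$ forces, via ``a standard extraction argument,'' an infinite cluster of zero-weight edges is not valid. A self-avoiding path of length $n$ and total weight $o(n)$ need not contain a single edge with $t_e=0$ (think of a continuous edge-weight distribution), and although most of its edges have weight $<\delta$ for any fixed $\delta$, those $\delta$-open edges may be scattered among many disjoint finite clusters, so no infinite cluster can be extracted. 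The correct argument is quantitative: choose $\delta$ with $\prob(t_e<\delta)<p_c$ and show that the probability that some self-avoiding path of length $n$ from $0$ has at least a $(1-a)$-fraction of $\delta$-open edges decays exponentially. That is exactly Kesten's Proposition~5.8, which this paper states as Proposition~2.3 and uses through Lemma~\ref{lem:largedeviation}; its proof is a genuine subcritical-percolation estimate, not a soft compactness extraction, and the honest fix is simply to invoke it to get $T(0,z)\geq D_1\|z\|_1$ eventually, hence $\mu(\mathbf{e}_1)\geq D_1>0$.

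Second, the uniformity step. Reducing ``by translation invariance'' to the origin-anchored estimate $\lim_{\delta\to0}\limsup_{t\to\infty}\max_{\|y\|_\infty\leq\delta t}T(0,y)/t=0$ hides the real difficulty: what you need is control of $T(tx_i,y)$ for $y$ within $\delta t$ of the faraway anchor $tx_i$, simultaneously for all large $t$ and all anchors. Translation invariance gives equality in law, but an almost-sure statement at the origin does not transfer to infinitely many translates; you must run a quantitative Borel--Cantelli over the anchor locations and scales. Done naively (one deterministic connecting path of length $\approx\delta t$ per target point, union bound over the $\asymp t^{d-1}$ targets in a shell, at every scale $t$), the single-big-edge contribution costs roughly $t^{d}\,\prob(Y>c\delta t)$ per shell, and summing over all scales requires $\E Y^{d+1}<\infty$, one moment more than you have. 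The classical Cox--Durrett/Kesten argument closes the gap by charging each lattice point only its own $2d$-edge minimum (so the relevant sum is $\sum_n n^{d-1}\prob(Y>cn)$, finite exactly when $\E Y^d<\infty$) and by working along dyadic scales with monotonicity to interpolate; the same moment condition is sharp by the converse Borel--Cantelli, which is the necessity direction. So the ``pair of maximal inequalities'' you defer to at the end is not a routine union bound: it is the technical heart of the theorem, and it must be formulated as a two-point, dyadic-scale statement (anchored away from the origin) for the proof to close.
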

	
	As a consequence of the shape theorem, one can show that even without the condition $\mathbf{E}Y^d<\infty, B(t)$ cannot grow too quickly.
	\begin{lem}
		\label{lem:weakshape1}
		Suppose that $\prob(t_e=0)<p_c$. Then there exists $M>0$ such that with probability $1$, $\bar{B}(t)\subseteq S(Mt)$ for all large $t$, where $S$ is defined in \eqref{eq:box}.
	\end{lem}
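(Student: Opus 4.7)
The plan is to derive the conclusion directly from Kesten's linear lower bound on passage times, which requires only $\prob(t_e=0)<p_c$ and no moment condition whatsoever. The key input (which the paper will record later as Lemma~\ref{lem:largedeviation} and which is \cite[Proposition~5.8]{kesten2}) yields constants $a,c_1,c_2>0$, depending only on $d$ and the distribution of $t_e$, such that for every $x\in\Z^d$,
\[
\prob\bigl(T(0,x)\leq a\|x\|_1\bigr)\leq c_1 e^{-c_2\|x\|_1}.
\]
This expresses the intuition that when zero-weight edges form a subcritical percolation cluster, no path can traverse a long $\ell^1$-distance with vanishingly small total weight.

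I would then fix any $M>1/a$ and bound, for each positive integer $n$, the probability that $\bar B(n)\not\subseteq S(Mn)$. Unpacking the definitions of $\bar B$ and $S$, this event forces the existence of a lattice vertex $x\in\Z^d$ with $\|x\|_\infty \geq \lfloor Mn\rfloor$ and $T(0,x)\leq n$. For all large $n$ and every such $x$, we have $\|x\|_1\geq \|x\|_\infty \geq \lfloor Mn\rfloor$, so $n\leq a\|x\|_1$. A union bound together with the Kesten estimate gives
\[
\prob\bigl(\bar B(n)\not\subseteq S(Mn)\bigr) \leq \sum_{k\geq \lfloor Mn\rfloor}\#\{x\in\Z^d:\|x\|_\infty=k\}\cdot c_1 e^{-c_2 k} \leq C_3 e^{-C_4 n},
\]
since the number of vertices at $\ell^\infty$-distance $k$ is $O(k^{d-1})$ and the exponential factor dominates.

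These probabilities are summable in $n$, so Borel--Cantelli yields that almost surely $\bar B(n)\subseteq S(Mn)$ for all sufficiently large integers $n$. The extension to arbitrary real $t$ is immediate from monotonicity of $t\mapsto \bar B(t)$: for every $t\geq 1$,
\[
\bar B(t)\subseteq \bar B(\lceil t\rceil)\subseteq S(M\lceil t\rceil)\subseteq S(2Mt),
\]
so replacing $M$ by $2M$ completes the argument. I do not anticipate any substantive obstacle. The only delicate point is to invoke Kesten's large-deviation estimate in the form that requires nothing beyond $\prob(t_e=0)<p_c$, which is exactly what \cite[Proposition~5.8]{kesten2} provides; the rest is a routine union bound plus Borel--Cantelli.
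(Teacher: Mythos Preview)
Your argument is correct. The small technical points (e.g., the off-by-one in passing from $x\notin S(Mn)$ to $\|[x]\|_\infty\geq \lfloor Mn\rfloor$, and the verification that $n\leq a\|x\|_1$ for large $n$) are all handled properly, and the final monotonicity step to pass from integers to real $t$ is clean.

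The paper, however, takes a different and shorter route: it truncates the weights by setting $t_e'=t_e\wedge 1$, observes that $B(t)\subseteq B'(t)$ for the associated balls, and then applies the Cox--Durrett shape theorem to $(t_e')$ (which now trivially satisfies $\E(Y')^d<\infty$ while retaining $\prob(t_e'=0)<p_c$). This yields the containment in one line once the shape theorem is in hand. Your approach instead bypasses the shape theorem entirely and relies only on Kesten's exponential lower bound on $T(0,x)$; this makes the argument more self-contained and, as a bonus, gives a quantitative exponential tail $\prob(\bar B(n)\not\subseteq S(Mn))\leq Ce^{-cn}$ that the truncation-plus-shape-theorem proof does not immediately provide. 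The paper's proof is slicker as a corollary of existing machinery; yours is more elementary and more informative about the rate.
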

	\begin{proof}
		If we define $t_e'$ by $t_e' = t_e\wedge 1$, then for all $t\geq 0$, $B(t)\subseteq B'(t)$, where $B'(t)$ is the $T$-ball using weights $(t_e')$. Then $\bar{B}(t)\subseteq \bar{B'}(t)$ and applying the shape theorem for $(t_e')$ establishes the lemma.
	\end{proof}
	
	We will also need the following result of Kesten \cite[Proposition~5.8]{kesten2}.
	\begin{prop}
		If $\prob(t_e=0)<p_c$, then there exist $D_1, D_2, D_3>0$ such that for all $n \geq 1$, one has
		\[
		\prob(\textnormal{there exists a self-avoiding path $\gamma$ from $0$ with $\#\gamma\geq n$ but $T(\gamma)<D_1n$}) \leq D_2 e^{-D_3n}.
		\]
	\end{prop}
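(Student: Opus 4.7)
The plan is to reduce the statement to a subcritical percolation estimate and then apply a block renormalization. Since $\prob(t_e = 0)<p_c$ and the distribution of $t_e$ is right-continuous at $0$, I pick $\delta > 0$ such that $p:=\prob(t_e\leq\delta)<p_c$. Call an edge $e$ \emph{light} if $t_e\leq\delta$ and \emph{heavy} otherwise; the light edges then form a subcritical Bernoulli bond percolation, and $T(\gamma)\geq\delta H(\gamma)$ for every path $\gamma$, where $H(\gamma)$ denotes its number of heavy edges. It therefore suffices to find $c>0$ so that, for every $m\geq 1$,
\[
\prob\bigl(\exists\text{ self-avoiding }\gamma\text{ from }0\text{ with }\#\gamma=m\text{ and }H(\gamma)<cm\bigr)\leq De^{-D'm},
\]
because setting $D_1=c\delta$ and summing this bound over $m\geq n$ then yields the proposition: if $\#\gamma\geq n$ and $T(\gamma)<D_1 n$, then $H(\gamma)<cn\leq c\#\gamma$, and any such $\gamma$ lies inside $S(m)$.

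The geometric input is that if $H(\gamma)<cm$, then the light portion of $\gamma$ must be concentrated in some large light clusters. Breaking $\gamma$ at its heavy edges yields at most $H(\gamma)+1$ maximal runs of consecutive light edges, each a self-avoiding subpath inside a single light cluster. If every light cluster meeting $\gamma$ had size at most $K_0$, we would get $m-H(\gamma)\leq K_0(H(\gamma)+1)$, forcing $H(\gamma)\geq (m-K_0)/(K_0+1)\geq cm$ for $c=1/(2(K_0+1))$ and $m$ large. A contraposition thus shows that when $H(\gamma)<cm$, the path $\gamma$ must pass through many edges lying in light clusters of size exceeding $K_0$.

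The main obstacle is turning this observation into an exponential-in-$m$ bound, since a direct union bound over $|S(m)|\sim m^d$ vertices against the subcritical tail $\prob(\#C(v)>K_0)\leq Ae^{-BK_0}$ only gives polynomial decay for fixed $K_0$. I would resolve this via block renormalization, following Kesten's original approach. Partition $\Z^d$ into cubes of side $L$ and declare a cube \emph{bad} if some light cluster meeting it has more than $K_0$ edges. For $L$ and $K_0$ taken large, $\prob(\text{bad})$ is arbitrarily small, and since badness depends only on edges within a bounded neighborhood of the cube, by the Liggett-Schonmann-Stacey comparison the bad cubes are stochastically dominated by i.i.d.\ Bernoulli site percolation of arbitrarily small parameter $\tilde p$ on the renormalized lattice. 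Any light edge in a \emph{good} cube lies in a cluster of size $\leq K_0$, so the total light length of $\gamma$ coming from good cubes is $\leq K_0(H(\gamma)+1)$; the rest must lie inside the bad cubes visited by $\gamma$, each containing at most $dL^d$ edges. Hence $H(\gamma)<cm$ forces $\gamma$ to visit at least $m/(2dL^d)$ bad cubes. The cubes visited by $\gamma$ form a connected subset of the renormalized lattice of size at most $m$, so combining the standard lattice-animal count of such subsets with a Chernoff estimate (valid thanks to the Liggett-Schonmann-Stacey domination) for the number of bad cubes among them bounds the probability by $De^{-D'm}$. Summing this bound over $m\geq n$ yields the proposition.
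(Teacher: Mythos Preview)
The paper does not prove this proposition; it is quoted directly as Kesten's \cite[Proposition~5.8]{kesten2}, so there is nothing in the paper to compare against beyond the citation. Your sketch is a correct and essentially standard argument in the spirit of Kesten's: reduce to counting heavy edges by choosing $\delta$ with $\prob(t_e\le\delta)<p_c$, then use a block renormalization to show that any long self-avoiding path must cross many ``bad'' blocks unless it contains linearly many heavy edges. (Your use of Liggett--Schonmann--Stacey is a modernization of Kesten's original hands-on treatment, which predates that comparison theorem.)

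One point deserves care. Your definition of a bad cube---``some light cluster meeting it has more than $K_0$ edges''---is \emph{not} a local event, since the cluster may extend arbitrarily far from the cube; thus the assertion that ``badness depends only on edges within a bounded neighborhood of the cube'' is false as written, and Liggett--Schonmann--Stacey does not apply directly. The standard fix is to replace this by a genuinely local event that dominates it: for instance, declare $Q$ bad if some vertex of $Q$ is the start of a self-avoiding light path of length $\ge K_0$. This is measurable with respect to edges within distance $K_0$ of $Q$; if $Q$ is good in this sense then every light run of $\gamma$ containing an edge of $Q$ has length $<K_0$, which is precisely what your counting uses, and the bad events are now $k$-dependent for $k$ depending only on $K_0/L$. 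With this adjustment the remainder of your argument (the edge count forcing $\gtrsim m/L^d$ bad cubes, the lattice-animal enumeration, and the Chernoff bound under the Bernoulli domination) goes through. Your initial reduction is also slightly roundabout: rather than summing over all $m\ge n$, you may simply truncate any witnessing $\gamma$ to its first $n$ edges, which already satisfies $H<cn$.
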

	From this, we immediately obtain a lower bound on $T$.
	\begin{lem}
		\label{lem:largedeviation}
		If $\prob(t_e=0)<p_c$, then for any $z\in\Z^d$,
		\[
		\prob(T(0,z) < D_1 \|z\|_1) \leq D_2 e^{-D_3\|z\|_1}.
		\]
	\end{lem}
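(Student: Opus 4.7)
The plan is to reduce the claim directly to the Kesten proposition stated immediately above. The event $\{T(0,z) < D_1\|z\|_1\}$ is empty when $z=0$, so I assume $\|z\|_1 \geq 1$. On this event, by definition of the infimum there exists a lattice path $\gamma$ from $0$ to $z$ with $T(\gamma) < D_1 \|z\|_1$. Performing loop-erasure on $\gamma$ produces a self-avoiding path $\gamma'$ from $0$ to $z$ whose total weight satisfies $T(\gamma') \leq T(\gamma) < D_1 \|z\|_1$ (edge-weights are nonnegative, so removing cycles cannot increase the weight).

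Because $\gamma'$ still connects $0$ to $z$, its edge count satisfies $\#\gamma' \geq \|z\|_1$, since $\|z\|_1$ is the graph distance between $0$ and $z$ in the nearest-neighbor lattice. Setting $n = \|z\|_1$, the event $\{T(0,z) < D_1\|z\|_1\}$ is therefore contained in the event that there exists a self-avoiding path from $0$ of length at least $n$ with total passage time less than $D_1 n$. Applying the Kesten proposition with this $n$ yields the bound $D_2 e^{-D_3 \|z\|_1}$, which is exactly the inequality we wanted. There is no real obstacle here: the only point that requires a remark is that loop-erasure preserves the property of being a path from $0$ to $z$ while only shrinking the weight, and that the graph distance from $0$ to $z$ is $\|z\|_1$.
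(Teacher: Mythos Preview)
Your proof is correct and is exactly the argument the paper has in mind: the paper simply says the lemma follows ``immediately'' from Kesten's proposition, and you have spelled out that immediate reduction (loop-erase to get a self-avoiding path, note its length is at least $\|z\|_1$, apply the proposition with $n=\|z\|_1$).
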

	
	We now state some results from percolation theory that will be used in the proof of Theorem~\ref{thm:holes}. For $p\in[0,1]$, let $\prob_p = \prod_{e\in\mathcal{E}^d} \mu_e$ be the product measure on $\{0,1\}^{\mathcal{E}^d}$, where each $\mu_e = p\delta_1+(1-p)\delta_0$. We say that an edge $e$ is open if $\omega(e)=1$, where $\omega$ is a typical element of the sample space $\{0,1\}^{\mathcal{E}^d}$. It is known that when $p>p_c$, there almost surely exists a unique infinite open cluster (that is, the subgraph induced by the open edges has an infinite connected component) \cite[Theorem~8.1]{Grimmett}. We denote by $\mathsf{C}$ the infinite open cluster and write $\textnormal{dist}_{\mathsf{C}}$ for the (graph) distance in $\mathsf{C}$.
	\begin{thm}[Theorem~1.1 \cite{AP}]
		\label{thm:antalpisztora}
		Let $p>p_c$. Then there exist $p$-depending constants $D_4, D_5 >0$ such that
		\[
		\prob_p(\textnormal{dist}_\mathsf{C} (x,y)\geq D_4\|x-y\|_\infty, x,y\in \mathsf{C})\leq e^{-D_5\|x-y\|_\infty}
		\]
		for all $x,y\in\Z^d$.
	\end{thm}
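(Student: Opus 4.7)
The plan is a renormalization argument. Fix a large integer $K$ (to be chosen) and partition $\Z^d$ into disjoint cubes $B_i = Ki + [0,K)^d$ indexed by $i \in \Z^d$, together with their ``fattened'' versions $B_i^{\ast} = \bigcup_{j : \|i-j\|_\infty \le 1} B_j$. Call a block $B_i$ \emph{good} if (i) inside $B_i^\ast$ there is an open cluster $\mathcal{K}_i$ that touches all $2d$ outer faces of $B_i^\ast$, and (ii) every open path in $B_i^\ast$ of diameter at least $K/10$ lies in $\mathcal{K}_i$. Condition (ii) enforces a uniqueness property guaranteeing that, for neighboring good blocks, the crossing clusters $\mathcal{K}_i$ and $\mathcal{K}_j$ merge inside $B_i^\ast \cap B_j^\ast$. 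In particular, any site in $\mathsf{C}$ lying in a good block $B_i$ and having enough local connections must belong to $\mathcal{K}_i$.

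The first technical step is to show $\prob_p(B_i \text{ is good}) \to 1$ as $K \to \infty$. For this I would invoke the Grimmett--Marstrand slab percolation theorem, which says that for $p > p_c(\Z^d)$ one also percolates in sufficiently thick slabs; combined with standard uniqueness and finite-energy arguments (as in \cite{Grimmett}), this yields crossings and uniqueness of the dominant cluster in $B_i^\ast$ with probability tending to $1$. Since the event $\{B_i \text{ is good}\}$ depends only on edges in $B_i^\ast$, the resulting field of indicator variables on $\Z^d$ is $2$-dependent. By Liggett--Schonmann--Stacey, once the marginal is close enough to $1$ (achieved by choosing $K$ large), this field stochastically dominates supercritical Bernoulli site percolation on $\Z^d$ with parameter $q$ arbitrarily close to $1$.

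The second step is to control detours around bad blocks. For $x, y \in \mathsf{C}$ with $\|x-y\|_\infty = N$, consider a shortest chain of blocks on the renormalized lattice from $B_{i_x}$ to $B_{i_y}$, which has length at most $N/K + 1$. A standard Peierls-type estimate using the exponential decay of bad-cluster sizes in highly supercritical site percolation shows that, with probability at least $1 - e^{-cN}$, any bad-block cluster intersecting a fixed straight corridor of width $O(1)$ between these blocks has $\|\cdot\|_\infty$-size at most $cN/2$, and the total volume of bad blocks on or near the corridor is at most $c'N$. Routing around the bad blocks, one obtains a sequence of good blocks $B_{i_0} = B_{i_x}, B_{i_1}, \ldots, B_{i_m} = B_{i_y}$ with $m = O(N/K)$, pairwise adjacent so that consecutive crossing clusters $\mathcal{K}_{i_\ell}$ are joined. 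Because $x \in \mathsf{C} \cap B_{i_x}$ and $B_{i_x}$ is good, $x$ is connected inside $B_{i_x}^\ast$ to $\mathcal{K}_{i_x}$ (and similarly for $y$); concatenating the connections yields a path in $\mathsf{C}$ from $x$ to $y$ of length at most $m \cdot K^d = O(K^{d-1} N)$. With $K$ fixed, this is $\le D_4 N$.

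Putting it together gives the desired bound with exponentially small error coming from two sources: the probability that some block near the corridor is bad (exponentially small in $N$ by Liggett--Schonmann--Stacey plus Peierls) and the probability that $x$ or $y$ fails to connect within its block to the crossing cluster (also exponential, using finite-energy). The main obstacle, which requires the most care, is making step one quantitative enough: one needs the good-block probability to be so close to $1$ that the dominated site-percolation field has no bad cluster reaching across the corridor and, simultaneously, that the merging in step (ii) is simultaneously controlled across a linear number of blocks. The Grimmett--Marstrand slab theorem is what makes the argument work down to every $p > p_c$, rather than only for $p$ close to $1$; circumventing it, or at least encapsulating it as a black box, is the part that would take the most effort to carry out in full detail.
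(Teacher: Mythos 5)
The paper does not prove this statement at all: it is quoted verbatim as Theorem~1.1 of Antal--Pisztora \cite{AP}, so there is no internal proof to compare against. Your sketch is essentially a reconstruction of the argument in \cite{AP} itself (Pisztora-type coarse graining into good blocks, Grimmett--Marstrand to push the good-block probability to $1$ for every $p>p_c$ when $d\geq 3$, Liggett--Schonmann--Stacey domination by highly supercritical site percolation, and a Peierls-type detour estimate), and the outline is sound. The one step you treat too loosely is the detour: it is not enough to bound bad clusters meeting a fixed straight corridor, because the rerouted path leaves the corridor and may meet further bad clusters; the correct bookkeeping (as in \cite{AP}) bounds the total size of the bad-block clusters intersecting a deterministic renormalized path joining $B_{i_x}$ to $B_{i_y}$, shows this total is at most a small multiple of $N/K$ except on an event of probability $e^{-cN}$, and then bounds the detour length by the sizes of the outer boundaries of those clusters (which are $*$-connected and made of good blocks). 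With that repair, and the observation you already make that any vertex of $\mathsf{C}$ in a good block is forced into the crossing cluster by the uniqueness property (ii), your argument yields exactly the quoted bound, with constants $D_4$ of order $K^{d-1}$ and $D_5$ coming from the domination and Peierls estimates.
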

	These results in Bernoulli bond percolation allow us to upper bound $T(0,x)$ if $M$ is large and $x$ is in the infinite open cluster, where we say an edge $e$ is open if $t_e\leq M$.
	\begin{lem}
		\label{lem:upperbound}
		Suppose that $\prob(t_e=0)<p_c$. Fix $M>0$ such that $\prob(t_e\leq M)>p_c$. Define a percolation configuration $(\omega(e))$ by $\omega(e) = \mathbf{1}_{\{t_e\leq M\}}$. Then
		\[
		\prob(T(0,x) \geq 4D_4M\|x\|_\infty \textnormal{ for infinitely many $x\in\mathsf{C}$}) =0,
		\]
		where $D_4$ is as in Theorem~\ref{thm:antalpisztora}.
	\end{lem}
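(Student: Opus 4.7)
The plan is to reduce the lemma to Theorem~\ref{thm:antalpisztora} by routing from $0$ through a fixed vertex of $\mathsf{C}$. Since $\prob(t_e\leq M)>p_c$, the induced percolation configuration $(\omega(e))$ is supercritical, so $\mathsf{C}$ is nonempty almost surely. I would let $y_0=y_0(\omega)$ be, say, the lexicographically smallest vertex of $\mathsf{C}$; this is an almost-surely finite $\Z^d$-valued random variable, and since $t_e<\infty$ almost surely, any deterministic lattice path from $0$ to $y_0$ shows $T(0,y_0)<\infty$ almost surely.

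Next, I would combine Theorem~\ref{thm:antalpisztora} with Borel--Cantelli. For each fixed $y\in\Z^d$,
\[
\sum_{x\in\Z^d}\prob(\textnormal{dist}_{\mathsf{C}}(x,y)\geq D_4\|x-y\|_\infty,\ x,y\in\mathsf{C})\leq \sum_{x\in\Z^d}e^{-D_5\|x-y\|_\infty}<\infty,
\]
so almost surely only finitely many $x$ produce this event. Intersecting over the countable family $y\in\Z^d$, we get a full-measure event on which, for \emph{every} $y\in\Z^d$ simultaneously, only finitely many $x$ do so. Applying this to the random $y=y_0(\omega)$ (which takes only countably many values), almost surely all but finitely many $x\in\mathsf{C}$ satisfy $\textnormal{dist}_{\mathsf{C}}(y_0,x)<D_4\|y_0-x\|_\infty$.

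For each such $x$, there is an open path in $\mathsf{C}$ from $y_0$ to $x$ of graph length less than $D_4\|y_0-x\|_\infty$, each edge of which has weight at most $M$; hence $T(y_0,x)\leq MD_4\|y_0-x\|_\infty$. By the triangle inequality for $T$ and the bound $\|y_0-x\|_\infty\leq\|y_0\|_\infty+\|x\|_\infty$,
\[
T(0,x)\leq T(0,y_0)+MD_4\|y_0\|_\infty+MD_4\|x\|_\infty.
\]
Since $T(0,y_0)$ and $\|y_0\|_\infty$ are almost-surely finite constants of the realization, the right-hand side is strictly less than $4D_4M\|x\|_\infty$ once $\|x\|_\infty$ exceeds a random threshold, which establishes the lemma.

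The main obstacle is that the origin itself need not belong to $\mathsf{C}$, so Theorem~\ref{thm:antalpisztora} cannot be applied directly to the pair $(0,x)$. The detour through the fixed cluster vertex $y_0$, combined with the harmless absorption of the finite random cost $T(0,y_0)+MD_4\|y_0\|_\infty$ into the enlarged constant $4D_4M$ (rather than the bare $2D_4M$ one might first expect), is what handles this.
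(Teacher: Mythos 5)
Your argument is correct and is essentially the paper's proof: both handle the fact that the origin need not lie in $\mathsf{C}$ by detouring through a vertex of $\mathsf{C}$ near $0$, apply Theorem~\ref{thm:antalpisztora} together with a Borel--Cantelli summation to bound the chemical distance (hence $T$, since open edges have weight at most $M$) from that vertex to all but finitely many $x\in\mathsf{C}$, and absorb the finite detour cost into the constant $4D_4M$; the only difference is bookkeeping, as the paper anchors at the deterministic box $S(k)$ meeting $\mathsf{C}$ with probability $1-\varepsilon$ and lets $\varepsilon\to 0$, while you anchor at a random vertex $y_0\in\mathsf{C}$ and intersect almost-sure events over all countably many possible anchors. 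One small slip: an infinite set such as $\mathsf{C}$ has no lexicographically smallest element, so take instead, say, a vertex of $\mathsf{C}$ of minimal $\ell^\infty$-distance to the origin (ties broken lexicographically), which is a measurable, almost-surely finite selection and leaves the rest of your argument unchanged.
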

	\begin{proof}
		Let $\varepsilon>0$. Let $A_k$ be the event that $S(k)$ intersects $\mathsf{C}$. Since $\mathsf{C}$ exists and is unique almost surely, we can fix $k\in\mathbb{N}$ such that $\prob(A_k)>1-\varepsilon$. 
		
		We decompose the event in the statement of the lemma as
		\begin{align*}
		&\phantom{\subseteq}\{T(0,x) \geq 4D_4M\|x\|_\infty \textnormal{ for infinitely many $x\in\mathsf{C}$}\}\\
		&\subseteq A_k^c \cup  \left(A_k \cap \{ T(0,x) \geq 4D_4M\|x\|_\infty \textnormal{ for infinitely many $x\in\mathsf{C}$}\} \right)\\
		&\subseteq A_k^c \cup B_1 \cup B_2,
		\end{align*}
		where
		\[
		B_1 = \{T(y,x) \geq 2D_4M\|x\|_\infty \textnormal{ for some $y\in S(k)\cap\mathsf{C}$ and for infinitely many $x\in\mathsf{C}$}\}
		\]
		and
		\[
		B_2 = \{T(0,y) \geq 2D_4M\|x\|_\infty \textnormal{ for some $y\in S(k)\cap\mathsf{C}$ and for infinitely many $x\in\mathsf{C}$}\}
		\]
		The event $A_k^c$ has probability at most $\varepsilon$ and $B_2$ almost surely does not occur. For $B_1$, if $\|x\|_\infty$ is sufficiently large (for instance $\|x\|_\infty \geq k$), then $T(y,x)\geq 2D_4M\|x\|_\infty$ implies $T(y,x) \geq D_4M\|x-y\|_\infty$. When $x,y\in \mathsf{C}$, this implies $\textnormal{dist}_\mathsf{C}(x,y) \geq D_4\|x-y\|_\infty$. By Theorem~\ref{thm:antalpisztora} and a union bound, we see that $\prob(B_1) = 0$.
		
		Therefore, we have
		\[
		\prob(T(0,x) \geq 4D_4M\|x\|_\infty \textnormal{ for infinitely many $x\in\mathsf{C}$}) \leq \varepsilon.
		\]
		Since $\varepsilon>0$ is arbitrary, this completes the proof.
	\end{proof}
	
	Finally, we need Bernstein's inequality \cite[Eq.~(2.10)]{BLM}. We state the inequality here for the reader's convenience.
	\begin{thm}[Bernstein's inequality]
		\label{thm:bernstein}
		If $X_1,\ldots, X_n$ are independent with $|X_i|\leq b$  almost surely for all $i$, then for all $t\geq 0$,
		\[
		\prob(|X_1+\cdots+X_n - \E(X_1+\cdots+X_n)|\geq t)\leq 2\exp\left(-\frac{t^2}{2(bt/3+\sum_{i=1}^n\E X_i^2)}\right).
		\]
	\end{thm}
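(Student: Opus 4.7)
The plan is to prove this via the classical Chernoff/exponential-Markov method applied to each tail separately. Write $Y_i = X_i - \mathbf{E}X_i$, so the $Y_i$ are independent, mean zero, and satisfy $|Y_i| \leq 2b$. Let $S = Y_1+\cdots+Y_n$ and $v = \sum_{i=1}^n \mathbf{E}X_i^2 \geq \sum_{i=1}^n \mathbf{E}Y_i^2$. For any $\lambda \in (0, 3/b)$ and any $t \geq 0$, Markov's inequality gives
\[
\mathbf{P}(S \geq t) \leq e^{-\lambda t}\, \mathbf{E} e^{\lambda S} = e^{-\lambda t}\prod_{i=1}^n \mathbf{E} e^{\lambda Y_i},
\]
so the main task reduces to controlling the individual moment generating functions $\mathbf{E} e^{\lambda Y_i}$, and then optimizing over $\lambda$.

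The key analytic input is the pointwise inequality
\[
e^{x} \leq 1 + x + \frac{x^2/2}{1 - |x|/3} \qquad \text{for } |x| < 3,
\]
which I would prove by expanding $e^x - 1 - x = \sum_{k \geq 2} x^k/k!$ and crudely bounding $k! \geq 2 \cdot 3^{k-2}$ for $k \geq 2$, yielding a geometric series summing to $(x^2/2)/(1 - |x|/3)$. Applying this with $x = \lambda Y_i$ (valid since $|\lambda Y_i| < 3$), taking expectations, using $\mathbf{E}Y_i = 0$, and then $1 + u \leq e^u$, I obtain
\[
\mathbf{E} e^{\lambda Y_i} \leq \exp\!\left(\frac{\lambda^2 \mathbf{E} Y_i^2 / 2}{1 - \lambda b/3}\right).
\]
Multiplying across $i$ gives $\mathbf{E} e^{\lambda S} \leq \exp\!\left(\lambda^2 v / (2(1 - \lambda b/3))\right)$.

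Plugging this back into the Chernoff bound and choosing $\lambda = t/(v + bt/3) \in (0, 3/b)$, the exponent becomes $-t^2/(2(v + bt/3))$ after a short algebraic simplification, which yields
\[
\mathbf{P}(S \geq t) \leq \exp\!\left(-\frac{t^2}{2(bt/3 + \sum_i \mathbf{E}X_i^2)}\right).
\]
The lower tail $\mathbf{P}(S \leq -t)$ is handled identically by replacing $Y_i$ with $-Y_i$, and a union bound over the two tails produces the factor of $2$ in the stated inequality.

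The only nontrivial step is the moment generating function bound; once that is in hand, everything else is Chernoff plus a one-variable minimization. I would expect the main obstacle, if one insists on the particular constant $1/3$ that appears in the statement, to be in choosing the right elementary bound on $e^x - 1 - x$ — weaker bounds (e.g.\ using $k! \geq 2^{k-1}$) give a Bernstein-type inequality but with a suboptimal constant. No additional structure from first-passage percolation is needed here, since the lemma is purely probabilistic.
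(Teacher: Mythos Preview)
The paper does not prove this statement: Theorem~\ref{thm:bernstein} is simply quoted from \cite[Eq.~(2.10)]{BLM} ``for the reader's convenience,'' with no argument given. Your proposal therefore goes beyond the paper by actually supplying the standard Chernoff--Cram\'er proof, and the overall strategy (exponential Markov, Taylor bound on $e^x-1-x$, optimize in $\lambda$, union of two tails) is exactly the classical one and is correct in outline.

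There is one slip in the constants that you should be aware of. You correctly note that $|Y_i|=|X_i-\E X_i|\le 2b$, but then you apply the inequality $e^x\le 1+x+\tfrac{x^2/2}{1-|x|/3}$ with $x=\lambda Y_i$ and pass to the bound $\E e^{\lambda Y_i}\le\exp\bigl(\tfrac{\lambda^2\E Y_i^2/2}{1-\lambda b/3}\bigr)$. Replacing the random $|Y_i|$ in the denominator by a deterministic bound forces $1-\lambda|Y_i|/3\ge 1-2\lambda b/3$, not $1-\lambda b/3$; likewise the validity range for $\lambda$ is $(0,3/(2b))$, not $(0,3/b)$. Carried through, your argument yields $2b/3$ in place of $b/3$. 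The sharp constant $b/3$ in the cited form comes from working with centered variables bounded by $b$ (equivalently, a one-sided bound $Y_i\le b$ in Bennett's lemma); the paper's phrasing is the common textbook one that tacitly matches that setting. For the uses in this paper the discrepancy is immaterial, but if you want the exact statement you should either assume $\E X_i=0$ or replace $b$ by $2b$ in your MGF bound.
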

	
	\section{Proofs of Theorems}	
	\subsection{Proof of Theorem~\ref{thm:main}}
	To show Theorem~\ref{thm:main}, we need the following lemma, which will be used to give an estimate on the frequency of rough times. It is a form of Markov's inequality for functions defined on the real line.
	\begin{lem}[Regularity lemma]
		\label{lem:rough time}
		Let $C, s_0>0$ be constants. Let $\phi,\psi:[0,\infty)\to[0,\infty)$ be Lebesgue measurable functions such that
		\begin{enumerate}
			\item $\int_0^t \phi(s)\;\textnormal{d}s \leq Ct^d\psi(t)$ for all $t> s_0$;
			\item $\psi$ is nondecreasing with $0<\psi(2t)\leq 2\psi(t)$ for all $t>0$.
		\end{enumerate} 
		Then for $t > 0$, one has
		\[
		\frac{\Leb(\{s\in[0,t]: \phi(s)\geq as^{d-1}\psi(s)\})}{t} \leq \frac{2s_0}{t} + \frac{2^{d+1}C}{a}.
		\]
	\end{lem}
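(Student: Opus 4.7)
The plan is to prove this by a dyadic decomposition of $[0,t]$, combining the integral upper bound from hypothesis (1) with the pointwise lower bound $\phi(s)\geq as^{d-1}\psi(s)$ on the rough set, using the doubling property of $\psi$ to absorb the scale mismatch between $\psi(t/2^k)$ and $\psi(t/2^{k+1})$.

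Write $E_t=\{s\in[0,t]:\phi(s)\geq as^{d-1}\psi(s)\}$. I first split $E_t=(E_t\cap[0,2s_0])\cup(E_t\cap[2s_0,t])$; the first piece contributes at most $2s_0$ to the Lebesgue measure, which produces the $2s_0/t$ term after dividing by $t$. (If $t\leq 2s_0$ the bound is trivial, so I may assume $t>2s_0$.)

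For the second piece, pick the largest integer $k_0\geq 0$ with $t/2^{k_0}\geq 2s_0$, and decompose
\[
E_t\cap[2s_0,t]\subseteq\bigcup_{k=0}^{k_0}\Bigl(E_t\cap[t/2^{k+1},t/2^k]\Bigr).
\]
On each annulus, monotonicity of $\psi$ and the definition of $E_t$ give $\phi(s)\geq a(t/2^{k+1})^{d-1}\psi(t/2^{k+1})$, so integrating and using hypothesis (1) at scale $t/2^k$ (valid because $t/2^k\geq 2s_0>s_0$) yields
\[
a(t/2^{k+1})^{d-1}\psi(t/2^{k+1})\cdot\Leb\bigl(E_t\cap[t/2^{k+1},t/2^k]\bigr)\leq\int_0^{t/2^k}\phi(s)\,ds\leq C(t/2^k)^d\,\psi(t/2^k).
\]
The doubling hypothesis $\psi(t/2^k)\leq 2\psi(t/2^{k+1})$ cancels the $\psi$ factors and, after simplification, leaves
\[
\Leb\bigl(E_t\cap[t/2^{k+1},t/2^k]\bigr)\leq\frac{2^d Ct}{a\,2^k}.
\]
Summing the geometric series over $k\geq 0$ gives $\Leb(E_t\cap[2s_0,t])\leq 2^{d+1}Ct/a$, and combining with the $[0,2s_0]$ piece and dividing by $t$ produces the stated bound.

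There isn't a real obstacle here; the only point to be careful about is ensuring that hypothesis (1) is only invoked at scales $>s_0$ (guaranteed by the choice of $k_0$) and that the doubling inequality is applied at the correct pair of scales so the $\psi$ factors cancel cleanly. The factor $2^{d+1}$ in the final bound arises as $2^d$ from the power $(d-1)$ telescoping against $d$, times $2$ from the doubling, while the geometric sum $\sum 2^{-k}$ contributes another factor that is absorbed into the constant.
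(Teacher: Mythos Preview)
Your proof is correct and follows essentially the same dyadic decomposition as the paper: both split $[0,t]$ into intervals $[t/2^{k+1},t/2^k]$, apply hypothesis (1) at the outer endpoint, use monotonicity and doubling of $\psi$ to get $\Leb(R_k)\leq (2^dC/a)\,t/2^k$, and sum the geometric series, with the initial stretch near $0$ accounting for the $2s_0/t$ term. The only minor difference is bookkeeping in how the cutoff index $k_0$ is defined; your verbal accounting of the constants in the last paragraph is slightly garbled (the $2^d$ per annulus already includes the doubling factor, and the extra $2$ comes from the geometric sum), but the displayed computations are correct.
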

	\begin{proof}
		We may assume that $t>2s_0$. Let $i_0 \geq 1$ be such that
		\[
		\frac{t}{2^{i_0+1}} \leq s_0 < \frac{t}{2^{i_0}}.
		\]
		
		For $i=0,1,2,\ldots$, define $t_i = t/2^i$ and 
		\[
		R_i = \{s\in[t_{i+1}, t_i]: \phi(s)\geq as^{d-1}\psi(s)\}.
		\]
		If $i < i_0$ (so that $t_i > s_0$), then
		\begin{align*}
		Ct_i^d\psi(t_i) &\geq \int_0^{t_i} \phi(s)\;\text{d}s\\
		&\geq \int_{R_i} \phi(s)\;\text{d}s\\
		&\geq \int_{R_i} as^{d-1}\psi(s)\;\text{d}s\\
		&\geq at_{i+1}^{d-1}\psi(t_{i+1})\Leb(R_i), 
		\end{align*}
		which implies 
		\[
		\Leb(R_i) \leq \frac{Ct_i^d\psi(t_i)}{at_{i+1}^{d-1}\psi(t_{i+1})} \leq \frac{2^dC}{a}t_i.
		\]
		Summing over $i$ completes the proof:
		\[
		\Leb(\{s\in[0,t]: \phi(s)\geq as^{d-1}\psi(s)\}) = \sum_{i = 0}^\infty \Leb(R_i) \leq 2s_0 + \frac{2^{d+1}Ct}{a}.
		\]
	\end{proof}
	
	\subsubsection{Edge boundary}
	We first need a lemma that gives the asymptotic behavior of truncated random variables.
	\begin{lem}
		\label{lem:truncation}
		Let $Y_1,Y_2,\ldots$ be a sequence of i.i.d. nonnegative random variables and let $(C_n)$ be a sequence of numbers such that $0\leq C_n\leq c$ for some $c>0$ and for all $n$. For each $n\in\mathbb{N}$ and $i\leq n$, define $Z_i^{(n)} = Y_i\wedge C_nn^{1/d}$. Then almost surely, $\sum_{i=1}^n Z_i^{(n)} \leq 2n\E Z_1^{(n)}$ for all large $n$.
	\end{lem}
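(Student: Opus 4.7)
The plan is a standard concentration plus Borel--Cantelli argument. I would write $M_n := C_n n^{1/d}$, $S_n := \sum_{i=1}^n Z_i^{(n)}$, and $\mu_n := n\E Z_1^{(n)}$, and aim to show $\prob(S_n > 2\mu_n)$ is summable in $n$. The fully degenerate case $Y_1 \equiv 0$ (and, for individual $n$, $M_n = 0$) reduces to $0 \leq 0$ and can be dismissed, so one may assume that some $u_0 > 0$ satisfies $p_0 := \prob(Y_1 \geq u_0) > 0$ and that $M_n > 0$.

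Since the $Z_i^{(n)}$ are i.i.d., bounded by $M_n \leq c n^{1/d}$, with $\E(Z_1^{(n)})^2 \leq M_n \E Z_1^{(n)}$, Bernstein's inequality (Theorem~\ref{thm:bernstein}) applied at deviation $t = \mu_n$ would give, after bounding the denominator inside the exponent by $\tfrac{8}{3} M_n \mu_n$,
\[
\prob(S_n \geq 2\mu_n) \leq 2\exp\left(-\frac{3 n \E Z_1^{(n)}}{8 M_n}\right).
\]
A direct split according to whether $Y_1 \geq u_0$ yields $\E[Y_1 \wedge M_n] \geq p_0 (M_n \wedge u_0)$. Splitting further into $M_n \leq u_0$ versus $M_n > u_0$ (and using $M_n \leq c n^{1/d}$) produces
\[
\frac{n \E Z_1^{(n)}}{M_n} \geq c_1 n^{1 - 1/d}
\]
for some $c_1 > 0$ depending only on the distribution of $Y_1$ and on $c$. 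Since $d \geq 2$, the right-hand side is at least $c_1 n^{1/2}$, so $\prob(S_n > 2\mu_n) \leq 2 \exp(-c_2 n^{1/2})$, which is summable. Borel--Cantelli then delivers the desired almost-sure bound $S_n \leq 2 \mu_n$ for all large $n$.

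The main obstacle is precisely the regime where $M_n$ is small: there $\E Z_1^{(n)}$ is itself small, and a naive law-of-large-numbers argument fails because the variance is not well controlled relative to $\mu_n$. The key point is that the proportionality $\E Z_1^{(n)} \geq p_0 M_n$, valid whenever $M_n \leq u_0$, exactly compensates for the small-variance factor in the Bernstein denominator, keeping the ratio $\mu_n / M_n$ of order at least $n^{1 - 1/d}$ uniformly in $n$.
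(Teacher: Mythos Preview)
Your proof is correct and follows the same route as the paper: Bernstein's inequality yields an exponential tail bound for $\prob(S_n > 2\mu_n)$, and Borel--Cantelli finishes. Your treatment is in fact slightly more careful: the paper bounds the Bernstein denominator using $cn^{1/d}$ rather than $M_n$ and then asserts that $\E Z_1^{(n)}$ is bounded away from zero, which is true in the intended application but not guaranteed by the stated hypotheses (nothing prevents $C_n n^{1/d}\to 0$); your lower bound $\E Z_1^{(n)} \geq p_0(M_n \wedge u_0)$ closes this gap and keeps the exponent of order $n^{1-1/d}$ uniformly.
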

	\begin{proof}
		If $Y_1=0$ almost surely, the statement is trivial, so we suppose that $Y_1>0$ with positive probability. Then $\E Z_1^{(n)}>0$ for all $n\geq 1$, and by Theorem~\ref{thm:bernstein}, one has
		\[
		\prob\left(\left|\sum_{i=1}^n Z_i^{(n)} - n \E Z_1^{(n)}\right|\geq n\E Z_1^{(n)}\right) \leq 2\exp\left(-\frac{n^2(\E Z_1^{(n)})^2}{2(cn^{1+1/d}\E Z_1^{(n)}/3 + n \E (Z_1^{(n)})^2)}\right).
		\]
		Now, $\E (Z_1^{(n)})^2 \leq cn^{1/d} \E Z_1^{(n)}$, so this  is further bounded above by
		\[
		2\exp\left(-\frac{n^2(\E Z_1^{(n)})^2}{2(cn^{1+1/d}\E Z_1^{(n)}/3 + cn^{1+1/d} \E Z_1^{(n)})}\right) = 2\exp\left(-\frac{3n^{1-1/d}\E Z_1^{(n)}}{8c}\right).
		\]
		Since $\E Z_1^{(n)}>0$ and is bounded away from $0$,  the right side is summable in $n$. By the Borel-Cantelli lemma, almost surely, for all large $n$,
		\[
		\left|\sum_{i=1}^n Z_i^{(n)} - n \E Z_1^{(n)}\right|< n\E Z_1^{(n)}.
		\] 
		This proves Lemma~\ref{lem:truncation}.
	\end{proof}
	
	We are now ready to prove Theorem~\ref{thm:main}(a).
	\begin{proof}[Proof of Theorem~\ref{thm:main}(a)]
		The following arguments fall under the purview of the ``array method.'' For $e\in\mathcal{E}^d$ and $t \geq 0$, define
		\[
		f(e,t) = \mathbf{1}_{\{e\in\partial_{\textnormal{e}} B(t)\}}.
		\]
		
		On the one hand, for $e=\{x,y\}$, we have
		\begin{align*}
		\int_{0}^t f(e,s)\;\text{d}s &= \Leb(\{s\in[0,t]: e\in\partial_{\textnormal{e}} B(s)\})\\
		&\leq T(x,y)\wedge t,
		\end{align*}
		because the amount of time that the edge $e$ stays on the boundary is bounded above by $|T(0,x)-T(0,y)| \leq T(x,y)$. Write $E(S(Mt))$ for the set of edges with both endpoints in $S(Mt)$. Summing over $e \in E(S(Mt))$ yields
		\begin{equation}
		\label{eq:sumintbound}
		\sum_{e \in E(S(Mt))} \int_0^t f(e,s)\;\text{d}s \leq \sum_{e \in E(S(Mt))} [T(x,y)\wedge t].
		\end{equation}
		
		We claim that there exists a nonrandom constant $c_0>0$ such that almost surely, for all large $t$,
		\begin{equation}
		\label{eq:upperbound}
		\sum_{e \in E(S(Mt))} [T(x,y)\wedge t] \leq c_0t^{d} \E[Y\wedge t].
		\end{equation}
		We now show \eqref{eq:upperbound} and, from now on, we will write ``i.o.'' to mean ``for an unbounded set of  $t$.'' By dividing the sum into sparser ones and using a union bound and translation invariance, we find a nonrandom constant $C_d$, depending only on $d$, such that for any $\lambda>0$,
		\begin{align*}
		\prob\left(\sum_{e=\{x,y\} \in E(S(Mt))} [T(x,y)\wedge t]\geq \lambda t^{d} \E[Y\wedge t]\text{ i.o.}\right) \\
		\leq C_d \prob\left(\sum_{x\in 5\Z^d\cap S(Mt)} [T(x,x+\mathbf{e}_1)\wedge t] \geq \lambda t^{d}\E[Y\wedge t]/C_d\text{ i.o.}\right).
		\end{align*}
		Now, note we can construct $2d$ edge-disjoint (deterministic) paths $\gamma_1$, $\ldots$, $\gamma_{2d}$ from $0$ to $\mathbf{e}_1$ such that if $x,y\in 5\Z^d$ and $x\neq y$, then the paths $x+\gamma_1,\ldots,x+\gamma_{2d}$ and the paths $y+\gamma_1,\ldots,y+\gamma_{2d}$ are edge-disjoint. For $x\in\Z^d$, let $\tau_x$ be the minimum of the passage times of these $2d$ disjoint paths from $x$ to $x+\mathbf{e}_1$. Then the second term in the last inequality is further bounded above by
		\[
		C_d \prob\left(\sum_{x\in 5\Z^d\cap S(Mt)} (\tau_x\wedge t) \geq \lambda t^{d}\E[Y\wedge t]/C_d\text{ i.o.}\right).
		\]
		Now, from the proof of \cite[Lemma~3.1]{CD}, there exists another dimension-dependent constant $C_d'$ such that $\E[Y\wedge t]\geq C_d'\E[\tau_0 \wedge t]$ for all $t \geq 0$. Furthermore, for $t \geq 1$, $\mathbf{E}[\tau_0 \wedge t] \geq (1/2) \mathbf{E}[\tau_0 \wedge \lceil t \rceil]$. So we obtain a further upper bound
		\[
		C_d \prob\left(\sum_{x\in 5\Z^d\cap S(Mt)} (\tau_x\wedge \lceil t\rceil) \geq \lambda C_d'(\lfloor t\rfloor)^{d}\E[\tau_0\wedge \lceil t\rceil]/(2C_d)\text{ i.o.}\right).
		\]
		
		Observe that the $\tau_x$'s in the above sum are i.i.d.. By Lemma~\ref{lem:truncation} with $Y_i=\tau_x$, $n=\#(5\Z^d\cap S(Mt))$ and $C_n = \lceil t\rceil/n^{1/d}$, there exists $\lambda_0>0$ such that
		\[
		\prob\left(\sum_{x\in 5\Z^d\cap S(Mt)} (\tau_x\wedge \lceil t\rceil) \geq \lambda_0 (\lfloor t\rfloor)^{d}\E[\tau_0\wedge \lceil t\rceil]\text{ i.o.}\right) = 0.
		\]
		Hence, following the string of inequalities, we see that there exists $c_0>0$ such that
		\begin{align*}
		\prob\left(\sum_{e=\{x,y\} \in E(S(Mt))} [T(x,y)\wedge t]\geq c_0t^{d}\E[Y\wedge t]\text{ i.o.}\right)=0
		\end{align*}
		and this proves the claim, equation \eqref{eq:upperbound}.
		
		Combining \eqref{eq:sumintbound} and \eqref{eq:upperbound}, we find with probability $1$,
		\[
		\sum_{e\in E(S(Mt))} \int_{0}^t f(e,t)\;\text{d}t \leq c_0t^{d}\E[Y\wedge t] \quad  \text{for all large }t.
		\]
		By Lemma~\ref{lem:weakshape1}, let $M>0$ such that almost surely there exists a random $T_0>0$ such that for all $t>T_0$, $\partial_\textnormal{e} B(t)\subseteq E(S(Mt))$. On the event $\{T_0 < t\}$, since $\partial_{\textnormal{e}} B(t)\subseteq E(S(Mt))$, one has for all $s\in[0,t]$,
		\[
		\sum_{e\in E(S(Mt))} f(e,s) = \#\partial_{\textnormal{e}} B(s),
		\]
		and hence
		\[
		\int_{0}^t \sum_{e\in E(S(Mt))} f(e,s)\;\text{d}s = \int_{0}^t \#\partial_{\textnormal{e}} B(s) \;\text{d}s.
		\]
		Therefore, with probability $1$,
		\[
		\int_{0}^t \#\partial_{\textnormal{e}} B(s) \;\text{d}s \leq c_0t^{d}\E[Y\wedge t] \quad \text{for all large } t.
		\]
		Note that $\E[Y\wedge 2t] = 2\E[(Y/2)\wedge t] \leq 2\E[Y\wedge t]$. Taking $t \to \infty$ in Lemma~\ref{lem:rough time} with $\phi(t) = \#\partial_{\textnormal{e}} B(t)$ and $\psi(t) = \E[Y\wedge t]$, we obtain that almost surely,
		\[
		\limsup_{t\to\infty}\frac{\Leb(R_t(a))}{t} \leq \frac{2^{d+1}c_0}{a}.
		\]
	\end{proof}
	
	\subsubsection{Edge exterior boundary}
	In the course of the proof of Theorem~\ref{thm:main}(b), we will need the following purely graph-theoretic fact. Recall that a set $U \subseteq \Z^d$ is called $*$-connected if for each pair $u, v \in U$ there is a sequence $(u = w_0, w_1, \ldots, w_k = v)$ where each $w_i \in U$ and $\|w_i - w_{i+1}\|_{\infty} \leq 1$.
	\begin{lem}[\cite{T}, Lemma 2]\label{lem:timar}
		Let $V \subseteq \Z^d$ be finite and connected. Then $\partial^{\textnormal{ext}}V$ is $*$-connected.
	\end{lem}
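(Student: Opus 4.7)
The plan is to reduce the statement to a local connectivity claim along an edge of $V$, and then exploit the uniqueness of the infinite component of $\mathbb{Z}^d \setminus V$. Since $V$ is finite and nearest-neighbor connected, $\mathbb{Z}^d \setminus V$ has a unique unbounded connected component $U_\infty$, and $\partial^{\textnormal{ext}} V$ coincides with the set of $x \in U_\infty$ having a nearest neighbor in $V$. So given $u, v \in \partial^{\textnormal{ext}} V$, I would pick $u_*, v_* \in V$ nearest-neighbor to $u$ and $v$ respectively and, using the connectivity of $V$, take a nearest-neighbor path $P = (u_* = w_0, w_1, \ldots, w_m = v_*)$ entirely inside $V$. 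The aim then becomes constructing a sequence $u = x_0, x_1, \ldots, x_m = v$ in $\partial^{\textnormal{ext}} V$ such that each $x_i$ is adjacent to $w_i$ and consecutive $x_i, x_{i+1}$ are $*$-neighbors.

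The crux is a local step: given a nearest-neighbor edge $\{w, w'\}$ of $P$ and an $x \in \partial^{\textnormal{ext}} V$ adjacent to $w$, one needs to produce some $x' \in \partial^{\textnormal{ext}} V$ adjacent to $w'$ with $\|x - x'\|_\infty \leq 1$ (hence $*$-adjacent to $x$). The natural candidate $x' = x + (w' - w)$ may fail: it may itself lie in $V$, or in a \emph{bounded} component of $\mathbb{Z}^d \setminus V$. In the former case one can try other neighbors of $w'$ and use a short $*$-detour; the genuinely delicate case is ruling out that every natural candidate lies in a bounded component of $\mathbb{Z}^d \setminus V$.

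This is the main obstacle, and the resolution I would pursue (following Tim\'ar \cite{T}) is a global topological argument: were no admissible $x'$ to exist, one could assemble the local obstructions into a finite set whose removal would leave $\mathbb{Z}^d \setminus V$ with two distinct unbounded components, contradicting uniqueness of $U_\infty$. A closely related way to organize the argument is via the dual plaquette structure — the edge boundary between $V$ and $U_\infty$ forms a closed $(d-1)$-dimensional cellular surface, and $\partial^{\textnormal{ext}} V$ inherits $*$-connectivity from the connectivity of this surface. Once the local step is in hand, iterating it along $P$ produces a $*$-path in $\partial^{\textnormal{ext}} V$ from $u$ to $v$, completing the proof.
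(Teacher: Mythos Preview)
The paper does not prove this lemma; it is quoted from Tim\'ar \cite{T} and used as a black box, so there is no in-paper argument to compare against.

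Your sketch, however, has a genuine gap. The invariant you propose to maintain along the path $P = (w_0, \ldots, w_m) \subseteq V$ --- that each $x_i \in \partial^{\textnormal{ext}} V$ is nearest-neighbor adjacent to $w_i$ --- need not be satisfiable: if some $w_i$ is an \emph{interior} vertex of $V$ (all $2d$ of its lattice neighbors lie in $V$), then there is no vertex outside $V$ adjacent to $w_i$ at all, let alone one in $\partial^{\textnormal{ext}} V$. For instance, take $V = [-N,N]^d \cap \mathbb{Z}^d$ with $N$ large and $u_*, v_*$ on opposite faces; a generic nearest-neighbor path in $V$ between them passes through the interior. So the ``local step'' you isolate has, for such a $w' = w_i$, no candidate $x'$ whatsoever --- the real obstruction is not that candidates might fall in $V$ or into a bounded pocket of $\mathbb{Z}^d \setminus V$, but that the local problem can be empty. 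You could try to repair this by insisting that $P$ stay on the inner vertex boundary of $V$, but showing such a path exists (i.e., that the inner boundary is connected) is essentially the dual of the statement you are trying to prove. The plaquette/surface viewpoint you mention at the end is much closer to a workable route, but the main body of your plan is committed to the path-in-$V$ strategy, which does not go through as written.
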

	
	We will rule out the possibility that the edge exterior boundary of $B(t)$ contains too many large-weight edges, where ``large'' is relative to the distribution of $t_e$. To this end, let $\alpha > 0$ be large (to be chosen later so that $\prob(t_e > \alpha)$ is sufficiently small). We will say that a finite vertex set $W \subseteq \Z^d$ is an ``$\alpha$-bad contour'' if
	\begin{enumerate}
		\item $W$ is $*$-connected;
		\item $W$ encloses $0$ -- that is, any vertex-self-avoiding infinite $\Z^d$ path beginning at $0$ must contain a vertex of $W$;
		\item letting $W_\alpha :=  \{w \in W: \exists e \ni w \text{ with } t_e > \alpha \}$,
		we have $\# W_\alpha \geq \# W / 2$.
	\end{enumerate}
	Note only condition 3 involves the realization of the edge-weights.
	
	\begin{prop}
		\label{prop:badcontour}
		If $\alpha>0$ is sufficiently large, then there exists $C>0$, depending only on $\alpha$ and $d$, such that for all $n \geq 1$, 
		\[
		\prob(\textnormal{there exists an $\alpha$-bad contour of cardinality $\geq n$})\leq e^{-Cn}.
		\]
	\end{prop}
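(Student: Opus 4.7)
The plan is a Peierls-style contour argument: I will separately bound the number of candidate $*$-connected enclosing sets $W$ of cardinality $n$ and, for each such $W$, the probability that condition~3 holds. Choosing $\alpha$ large enough that the per-set probability decays faster than the counting grows, a union bound then yields the exponential estimate.

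For the counting step, I would use the standard lattice-animal bound: the number of $*$-connected subsets of $\Z^d$ of size $n$ containing a fixed vertex is at most $e^{c_1 n}$ for some $c_1 = c_1(d)$. If $W$ is a $*$-connected enclosing set of size $n$, then the positive-axis ray $0, \mathbf{e}_1, 2\mathbf{e}_1, \ldots$ must meet $W$ at some point $k\mathbf{e}_1$, and the isoperimetric observation that any $*$-connected set enclosing the $\ell_\infty$-ball of radius $r-1$ has at least $c_d r^{d-1}$ vertices forces $k \leq C n^{1/(d-1)} \leq Cn$. There are thus at most $Cn+1$ candidate base points on this axis, giving a total count of at most $(Cn+1)e^{c_1 n} \leq e^{c_2 n}$ enclosing $*$-connected sets of size $n$, after slightly enlarging the exponential rate to absorb the polynomial prefactor.

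For the probability step, fix such a $W$ with $|W|=n$ and set $p=\prob(t_e>\alpha)$. I would partition $\Z^d$ into the $2^d$ sublattices $2\Z^d+v$, $v\in\{0,1\}^d$. Any two vertices in the same sublattice are at $\ell_\infty$-distance at least $2$ and therefore share no edge, so on each sublattice the indicators $\mathbf{1}_{\{w\in W_\alpha\}}$ are independent Bernoulli$(q)$ variables with $q=1-(1-p)^{2d}\leq 2dp$. By pigeonhole, the event $|W_\alpha|\geq n/2$ forces some sublattice to contribute at least $\lceil n/2^{d+1}\rceil$ successes. Union-bounding over sublattices and using the binomial tail estimate gives
\[
\prob(W \text{ satisfies condition 3}) \leq 2^d \binom{n}{\lceil n/2^{d+1}\rceil}(2dp)^{\lceil n/2^{d+1}\rceil} \leq \bigl(C_d\, p^{1/2^{d+1}}\bigr)^{n}
\]
for a constant $C_d$ depending only on $d$, after folding the $2^d$ prefactor into the exponential rate. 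Since $\prob(t_e>\alpha)\to 0$ as $\alpha\to\infty$, I choose $\alpha$ large enough that $C_d\, p^{1/2^{d+1}} \leq e^{-2c_2}$. Multiplying the counting bound by the per-set probability gives a total at most $e^{c_2 n}\cdot e^{-2c_2 n}=e^{-c_2 n}$, establishing the claim with $C = c_2$.

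The main obstacle is the elementary counting/isoperimetric step for $*$-connected enclosing sets — in particular the lower bound $|W|\gtrsim r^{d-1}$ on enclosing sets at $\ell_\infty$-distance $r$ from the origin — which one could also recover from the $*$-connected planar/cubical duality arguments in the literature. The probability step is a clean Chernoff-type estimate once the sublattice decoupling is in place, and uniformity in $n\geq 1$ is a routine adjustment of the constants.
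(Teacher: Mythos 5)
Your proposal follows essentially the same Peierls-type route as the paper: an $e^{Cn}$ lattice-animal count of enclosing $*$-connected contours, anchored by translating along the $\mathbf{e}_1$-axis, combined with a per-contour binomial estimate obtained by restricting to a sublattice on which the events $\{w\in W_\alpha\}$ are independent (the paper uses the two parity classes of $\|x\|_1$ rather than your $2^d$ sublattices $2\Z^d+v$, and proves the animal bound via a percolation trick rather than citing it, but these differences are immaterial), followed by a union bound with $\alpha$ large.

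Two small repairs are needed. First, your isoperimetric justification of the anchor point does not work as stated: from the fact that the positive $\mathbf{e}_1$-ray first meets $W$ at $k\mathbf{e}_1$ you cannot conclude that $W$ encloses the $\ell^\infty$-ball of radius $k-1$, and indeed the bound $k\leq Cn^{1/(d-1)}$ is false for $d\geq 3$ (take a small shell around the origin with a long thin ``chimney'' along the axis, capped at distance of order $n$; this encloses $0$, has order $n$ vertices, and its first intersection with the ray is at distance of order $n$). What you actually use, $k\leq Cn$, is true and has a one-line proof: $W$ meets both the positive and the negative $\mathbf{e}_1$-rays (or contains $0$), and a $*$-connected set of $n$ vertices has $\ell^\infty$-diameter at most $n-1$, so some $k\mathbf{e}_1\in W$ with $0\leq k\leq n-1$; this is exactly how the paper reduces to animals containing the origin. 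Second, the event in the statement concerns contours of cardinality at least $n$, so the product of your counting and probability bounds must be summed over all cardinalities $k\geq n$, not evaluated only at $k=n$; since the per-cardinality bound is $e^{-c_2 k}$, the geometric series only changes the constant. With these adjustments your argument is correct and matches the paper's.
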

	To prove Proposition~\ref{prop:badcontour}, we first prove the following lemma, which gives an upper bound on the number of contours around $0$. It is a basic bound on lattice animals, like \cite[Eq.~(4.24)]{Grimmett}.
	\begin{lem}
		\label{lem:number_of_contours}
		For $n\in\mathbb{N}$, let $\mathcal{C}_n$ be the set of all $*$-connected $W\subseteq \Z^d$ such that $\# W=n$ and $W$ encloses $0$. Then 
		\[
		\#\mathcal{C}_n\leq n\left[\frac{(3^d)^{3^d}}{(3^d-1)^{3^d-1}}\right]^n.
		\]
	\end{lem}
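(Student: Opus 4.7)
First I would use the enclosure property to reduce the problem to counting $*$-connected $n$-sets through a fixed anchor vertex, which will account for the prefactor $n$ in the statement. Since $W$ is $*$-connected with $\#W = n$, its $\ell^\infty$-diameter is at most $n - 1$. The positive $x_1$-ray from $0$ is an infinite vertex-self-avoiding path starting at $0$, so the enclosure condition forces it to meet $W$; let $k_0 \geq 1$ be the smallest index such that $(k_0, 0, \ldots, 0) \in W$. Moreover the $\ell^\infty$-bounding box of $W$ must contain $0$ (otherwise the axial ray from $0$ pointing away from this box escapes to infinity without meeting $W$, contradicting enclosure), which combined with the diameter bound forces $k_0 \leq n - 1$. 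So there are at most $n$ choices for the anchor, and by translation invariance of $\Z^d$ the task reduces to bounding
\[
N(n) := \#\{W \subseteq \Z^d : W \text{ is $*$-connected}, \#W = n, 0 \in W\} \leq \left[\tfrac{k^k}{(k-1)^{k-1}}\right]^n,
\]
where $k = 3^d$.

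Next I would bound $N(n)$ by a Peierls-style lattice animal argument in the $*$-adjacency graph of $\Z^d$, which is $(k-1)$-regular. The plan is to encode each such $W$ by a canonical rooted spanning tree based at $0$ together with direction labels for its edges, so that the encoding is injective into a set whose cardinality is controlled by $\binom{kn}{n-1}/n$. A natural way to produce this estimate is via Lagrange inversion applied to the generating function equation $B(x) = x(1 + B(x))^k$, giving $[x^n] B(x) = \tfrac{1}{n}\binom{kn}{n-1}$. Finally, the elementary bound $\binom{N}{K} \leq N^N / [K^K (N-K)^{N-K}]$ with $N = kn$ and $K = n$ yields $\binom{kn}{n-1} \leq \binom{kn}{n} \leq [k^k/(k-1)^{k-1}]^n$, and the remaining $1/n$ factor is absorbed into the $n$ prefactor in the statement.

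The main obstacle I anticipate is isolating the sharp constant $\frac{k^k}{(k-1)^{k-1}}$ rather than a weaker one. A naive spanning-tree encoding, which records only the parent direction of each non-root vertex from among its $k - 2$ non-parent $*$-directions, gives merely $N(n) \lesssim \binom{(k-1)n}{n}$ and hence the weaker constant $(k-1)^{k-1}/(k-2)^{k-2}$. Achieving the sharp bound requires giving each vertex of the encoding a $k$-th ``slot'' beyond its $k-1$ $*$-neighbors --- for instance by using a DFS-style walk where each step at a node is one of $k$ actions (visit one of the $k-1$ adjacent directions, or execute a canonical ``pop-back''). The careful part of the proof will be exhibiting such an injective encoding and verifying that the corresponding generating function is indeed $B(x) = x(1+B(x))^k$, so that Lagrange inversion produces exactly the desired constant.
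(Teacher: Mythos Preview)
Your anchor-and-translate reduction to rooted $*$-connected $n$-sets is the same as the paper's (minor slip: allow $k_0 = 0$, since $0 \in W$ is permitted by the definition of ``encloses''). Where you diverge is in bounding $N(n)$. The paper does this in two lines by a probabilistic Peierls argument: drop i.i.d.\ Bernoulli$(p)$ site variables on $\Z^d$, use $\#\partial^* W \leq (3^d-1)\#W$ to get
\[
1 \;\geq\; \sum_{W \in \tilde{\mathcal{C}}_n} p^{n}(1-p)^{\#\partial^* W} \;\geq\; N(n)\, \bigl[p (1-p)^{3^d-1}\bigr]^n,
\]
and then set $p = 3^{-d}$. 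No trees, no generating functions, no Lagrange inversion.

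Your tree-encoding route can also reach the target, but the ``main obstacle'' you describe is inverted. The $*$-graph has degree $k-1$, and the standard injection of rooted connected $n$-sets into $n$-vertex subtrees of the rooted $(k-1)$-ary tree already yields
\[
N(n) \;\leq\; \frac{1}{(k-2)n+1}\binom{(k-1)n}{n} \;\leq\; \left[\frac{(k-1)^{k-1}}{(k-2)^{k-2}}\right]^n .
\]
Since $m \mapsto m^m/(m-1)^{m-1}$ is increasing, this base is \emph{smaller} than the target $k^k/(k-1)^{k-1}$; the naive bound is therefore already \emph{stronger} than what the lemma asks, and the extra ``$k$-th slot'' together with the functional equation $B(x)=x(1+B(x))^k$ is unnecessary. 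Moreover, your DFS-with-pop-back sketch does not obviously inject into the objects counted by $\tfrac{1}{n}\binom{kn}{n-1}$ --- a DFS contour walk on an $n$-vertex tree has $2(n-1)$ steps, not $n$ --- so that piece would need substantial repair even if it were needed. In short: once the direction of the inequality is fixed, your plan works, but the paper's percolation argument is both shorter and sidesteps the combinatorics entirely.
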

	\begin{proof}[Proof of Lemma~\ref{lem:number_of_contours}]
		Define $\tilde{\mathcal{C}}_n$ to be the set of all $*$-connected sets $W\subseteq\Z^d$ with $\# W=n$ and $0\in W$. If $W\in\mathcal{C}_n$, then there exists $k\in\{0,1,\ldots,n-1\}$ such that $W-k\mathbf{e}_1\in \tilde{\mathcal{C}}_n$, and hence $\#\mathcal{C}_n\leq n\#\tilde{\mathcal{C}}_n$.
		
		To bound $\#\tilde{\mathcal{C}}_n$, we consider the measure $\prob_p' = \prod_{x\in\Z^d}\mu_x$ on the space $\{0,1\}^{\Z^d}$, where $p\in[0,1]$ and each $\mu_x = p\delta_1 + (1-p)\delta_0$ is the Bernoulli measure on $\{0,1\}$. We will write the elements in $\{0,1\}^{\Z^d}$ as $(\omega(x))_{x\in\Z^d}$. We say that $W\subseteq \Z^d$ is the $*$-open cluster of $0$ if $W$ is the maximal $*$-connected subset containing $0$ with $\omega(x)=1$ for all $x\in W$.	We also define the $*$-vertex boundary $\partial^* V$ of a bounded $V\subseteq \Z^d$ to be the set of all $x\in\Z^d\setminus V$ such that $\|x-y\|_\infty=1$ for some $y\in V$. Note that
		\begin{align*}
		1&\geq \prob_p'(\text{the  $*$-open cluster of $0$ has cardinality $n$})\\
		&= \sum_{W\in\tilde{\mathcal{C}}_n} p^{\#W}(1-p)^{\#\partial^* W}.
		\end{align*}
		Note that for finite $V\subseteq \Z^d$, each $x\in V$ has at most $3^d-1$ many distinct $*$-adjacent vertices on $\partial^* V$, and each vertex on $\partial^* V$ is adjacent to some $x\in V$. Thus $\#\partial^* V\leq (3^d-1)\# V$ and we have
		\[
		1\geq \sum_{W\in\tilde{\mathcal{C}}_n} p^{\#W} (1-p)^{(3^d-1)\#W} = \#\tilde{\mathcal{C}}_n[p(1-p)^{3^d-1}]^{n}.
		\]
		This inequality holds for all $p\in[0,1]$, so setting $p=3^{-d}$ yields
		\[
		\#\tilde{\mathcal{C}}_n \leq \left[\frac{(3^d)^{3^d}}{(3^d-1)^{3^d-1}}\right]^n
		\]
		and finishes the proof.
	\end{proof}
	\begin{proof}[Proof of Proposition~\ref{prop:badcontour}]
		Note that
		\begin{align*}
		&\prob(\text{there exists an $\alpha$-bad contour of cardinality $\geq n$}) \\
		&\leq \sum_{k=n}^\infty \prob(\text{there exists an $\alpha$-bad contour of cardinality $=k$})\\
		&\leq \sum_{k=n}^\infty \sum_{W\in\mathcal{C}_k} \prob(\text{$W$ is an $\alpha$-bad contour}).
		\end{align*}
		Fix $W\in\mathcal{C}_k$. If $W$ is an $\alpha$-bad contour, then at least $k/2$ many vertices of $W$ are in $W_\alpha$. Among these vertices, at least half of them are in $\Z_\textnormal{even}^{d}$ or at least half of them are in $\Z^d\setminus \Z_\textnormal{even}^{d}$, where $\mathbb{Z}_\textnormal{even}^{d} = \{x \in \mathbb{Z}^d : \|x\|_1 \text{ is even}\}$.
		
		For fixed $W$, the events $\{x\in W_\alpha\}$ for $x\in\Z_\textnormal{even}^{d}$ are independent (similarly for $\Z^d \setminus \Z_\textnormal{even}^{d}$). Writing $F$ the distribution function of $t_e$, we obtain
		\begin{align*}
		\prob(\text{$W$ is an $\alpha$-bad contour}) &= \sum_{m=\lceil k/2 \rceil}^k \mathbf{P}(\#W_\alpha = m) \\
		&\leq \sum_{m=\lceil k/2 \rceil}^k \sum_{W \ni x_1, \ldots, x_m \text{ distinct}} \mathbf{P}(\text{all } x_i \in W_\alpha)\\
		&\leq \sum_{m=\lceil k/2\rceil}^k {k\choose m} (1-F(\alpha)^{2d})^{m/2}\\
		&\leq (1-F(\alpha)^{2d})^{k/4}\sum_{m=\lceil k/2\rceil}^k {k\choose m}\\
		&\leq 2^k(1-F(\alpha)^{2d})^{k/4}.
		\end{align*}
		
		Therefore
		\begin{align*}
		&\prob(\text{there exists an $\alpha$-bad contour of cardinality $\geq n$})\\
		& \leq \sum_{k=n}^\infty k\left[\frac{(3^d)^{3^d}}{(3^d-1)^{3^d-1}}\right]^k 2^{k}(1-F(\alpha)^{2d})^{k/4}\\
		&\leq e^{-Cn}
		\end{align*}
		for all $n$, if $\alpha$ is sufficiently large.
	\end{proof}
	We can now prove Theorem~\ref{thm:main}(b).
	\begin{proof}[Proof of Theorem~\ref{thm:main}(b)]
		By Lemma~\ref{lem:weakshape1}, let $M>0$ be such that there exists a random $T_0>0$ such that for all $t>T_0$, $\partial^{\textnormal{ext}}_{\textnormal{e}} B(t)\subseteq E(S(Mt))$. Fix $\alpha>0$ such that the conclusion of Proposition~\ref{prop:badcontour} holds. By the Borel-Cantelli lemma and Lemma~\ref{lem:timar}, together with the fact that almost surely $\#\partial^{\textnormal{ext}}_{\textnormal{e}} B(t)\to\infty$ as $t\to\infty$, there exists a random $T_1\geq T_0$ such that for all $t>T_1$, $\partial^{\textnormal{ext}} B(t)$ is not an $\alpha$-bad contour.
		
		For $e\in\mathcal{E}^d$ and $t \geq 0$, define
		\[
		h(e,t) = \mathbf{1}_{\{e\in\partial^{\textnormal{ext}}_{\textnormal{e}} B(t), t_e\leq\alpha\}}.
		\]
		Consider an outcome in the event $\{t>T_1\}$. For any $e\in\mathcal{E}^d$,
		\[
		\int_0^t h(e,s)\;\textnormal{d}s \leq \Leb\{t \geq 0 : e \in \partial_{\textnormal{e}} B(t)\} \mathbf{1}_{\{t_e \leq \alpha\}} \leq t_e \mathbf{1}_{\{t_e \leq \alpha\}} \leq \alpha,
		\]
		and hence
		\begin{equation}
		\label{eq:sumintbound2}
		\sum_{e \in E(S(Mt))} \int_0^t h(e,s)\;\textnormal{d}s \leq C_d \alpha M^dt^d.
		\end{equation}
		
		On the other hand, since $\partial^{\textnormal{ext}}_{\textnormal{e}} B(t)\subseteq E(S(Mt))$, for any $s\in[0,t]$,
		\[
		\sum_{e \in E(S(Mt))} h(e,s) = \#\{e\in \partial^{\textnormal{ext}}_{\textnormal{e}} B(s): t_e\leq \alpha\}.
		\]
		For $s>T_1$, $\partial^{\textnormal{ext}} B(s)$ is not an $\alpha$-bad contour, so for $s$ with $T_1 < s \leq t$,
		\[
		\sum_{e \in E(S(Mt))} h(e,s) \geq \frac{1}{2}\#\partial^{\textnormal{ext}} B(s) \geq \frac{1}{4d} \#\partial^{\textnormal{ext}}_{\textnormal{e}} B(s).
		\]
		Therefore, on the event $\{t>T_1\}$,
		\[
		\int_{T_1}^t \#\partial^{\textnormal{ext}}_{\textnormal{e}} B(s)\;\textnormal{d}s \leq 4d \int_0^t\sum_{e \in E(S(Mt))} h(e,s)\;\textnormal{d}s.
		\]
		Combining this with \eqref{eq:sumintbound2}, we have
		\[
		\int_0^t \#\partial^{\textnormal{ext}}_{\textnormal{e}} B(s)\;\textnormal{d}s \leq \int_0^{T_1} \#\partial^{\textnormal{ext}}_{\textnormal{e}} B(s)\;\textnormal{d}s + 4d C_d\alpha M^dt^d \leq 5dC_d\alpha M^dt^d
		\]
		when $t$ is sufficiently large. Applying Lemma~\ref{lem:rough time} with $\phi(t) = \#\partial_\textnormal{e}^{\textnormal{ext}}B(t)$ and $\psi$ equal to a constant, and taking $t\to\infty$ completes the proof.
	\end{proof}

	\subsection{Proof of Theorem~\ref{thm:holes}}
	In this section, we will show that almost surely, $\#\partial_{\textnormal{e}} B(t) \geq C[(1-F_Y(t))\vee t^{-1}]t^{d}$ for all large $t$. We will use Lemma 2.6, and we remark that although $D_4$ (from that lemma) depends on $M$ (from the statement of Theorem~\ref{thm:antalpisztora}), by a straightforward coupling, the conclusion of Theorem~\ref{thm:antalpisztora} (and hence Lemma~\ref{lem:upperbound}) still holds if we fix $D_4$ and increase $M$. Hence, we may assume $M$ is sufficiently large so that 
	\[
	\delta := D_1/16D_4M < 1/2,
	\]
	and therefore
	\[
	R := (1-\delta)^{-1} <2.
	\]
	Let $t_n = 4D_4MR^{n+1}$.
	
	We will define a set of vertices that form size-one holes in $B(t)$, and will contribute to the size of $\partial_{\textnormal{e}} B(t)$. For $r,u$ with $r < u$, let $\Ann(r,u) = S(u)\setminus S(r)$. Define $L_{n}$ to be the number of vertices $v$ in $\mathbf{e}_1 + \Ann(R^n,R^{n+1})\cap 3\Z^d$ such that (with $\mathsf{C}$ from Lemma~\ref{lem:upperbound} and open edges being those with $t_e \leq M$)
	\begin{enumerate}
		\item[(i)] $v-\mathbf{e}_1\in\mathsf{C}$, and
		\item[(ii)] all edges adjacent to $v$ have edge-weights $> t_n$.
	\end{enumerate}
	We claim that almost surely, when $n$ is sufficiently large, 
	\begin{equation}\label{eq: new_eq}
	\#\partial_{\textnormal{e}} B(t) \geq L_n \quad \text{for all } t \in [t_n,t_{n+1}).
	\end{equation}
	The reason is as follows: from Lemmas~2.4 and 2.6, we can almost surely find a random $k_0$ such that
	\begin{enumerate}
		\item whenever $\|x\|_\infty\geq k_0$ and $x\in\mathsf{C}$, $T(0,x) \leq 4D_4M\|x\|_\infty$, and
		\item whenever $\|z\|_1\geq k_0$, $T(0,z)\geq D_1 \|z\|_1$.
	\end{enumerate}
	For a given $n$, consider an outcome in the event $\{k_0\leq R^n\}$. Let $t\in [t_n, t_{n+1})$. If $x - \mathbf{e}_1\in \Ann(R^n, R^{n+1})$, and
	\begin{enumerate}
		\item[(a)] all the edges incident to $x$ have edge-weights $> (1-\delta)t$,
		\item[(b)] $T(0,x-\mathbf{e}_1)\leq t$ and
		\item[(c)] $T(0,z)\geq \delta t$ for all $z$ such that $\|z-x\|_1=1$,
	\end{enumerate}
	immediately $\{x-\mathbf{e}_1,x\} \in \partial_{\textnormal{e}} B(t)$ (see the sketch of proof of Theorem~\ref{thm:holes}). Now, condition (ii) in the definition of $L_n$ implies (a), because $t_n=4D_4MR^{n+1} = (1-\delta) 4D_4MR^{n+2} = (1-\delta)t_{n+1} >  (1-\delta)t$. Secondly, condition (i) implies (b): when $x-\mathbf{e}_1\in\mathsf{C}$,
	\[
	T(0,x-\mathbf{e}_1)\leq 4D_4M\|x-\mathbf{e}_1\|_\infty \leq 4D_4M R^{n+1} = t_n \leq t.
	\]
	(c) always holds because when $z$ is such that $\|z-x\|_1=1$, then $\|z\|_1 \geq R^n \geq k_0$ (since $\|z\|_1 > \lfloor R^n\rfloor$ and $\|z\|_1\in\N$), and hence 
	\[
	T(0,z) \geq D_1\|z\|_1\geq D_1  R^n= \delta 16D_4M R^n \geq \delta 4D_4MR^{n+2} =\delta t_{n+1} \geq \delta t.
	\]
	Therefore, the number of vertices that satisfy (a), (b) and (c) is bounded below by $L_n$, and this proves \eqref{eq: new_eq}.
	
	We will soon show that for some constant $C_5>0$, almost surely, for all $n$ large,
	\begin{equation}
	\label{eq:L_nlowerbound}
	L_n \geq 2C_5(1-F_Y(t_n))R^{nd} - C_5[(1-F_Y(t_n)) \vee R^{-n}]R^{nd}.
	\end{equation}
	Before showing \eqref{eq:L_nlowerbound} holds for all large $n$, we first show how \eqref{eq:L_nlowerbound} implies Theorem~\ref{thm:holes}. Combining \eqref{eq:L_nlowerbound} with $\#\partial_{\textnormal{e}} B(t) \geq L_{n}$, we have almost surely that for all large $n$ and for all $t\in [t_n, t_{n+1})$,
	\[
	\#\partial_{\textnormal{e}} B(t) \geq 2C_5(1-F_Y(t_n))R^{nd} - C_5[(1-F_Y(t_n)) \vee R^{-n}]R^{nd}.
	\]
	Fix such $n$ and let $t\in [t_n, t_{n+1})$. There are two cases we need to consider.
	\begin{enumerate}
		\item If $1-F_Y(t_n)\leq R^{-n}$, then $1-F_Y(t_n)\leq 16D_4M/t$, and hence $1-F_Y(t)\leq 16D_4M/t$. By \eqref{eq:volume}, $\#\partial_{\textnormal{e}} B(t) \geq c_3t^{d-1}$. In particular,
		\begin{align*}
		[(1-F_Y(t))\vee t^{-1}]t^d &\leq [16D_4M/t \vee t^{-1}]t^d \\
		&\leq C_7t^{d-1} \\
		&\leq  (C_7/c_3)\#\partial_{\textnormal{e}} B(t) 
		\end{align*}
		holds.
		
		\item $1-F_Y(t_n)> R^{-n}$. This yields $\#\partial_{\textnormal{e}} B(t) \geq C_5(1-F_Y(t_n))R^{nd} \geq C_8 (1-F_Y(t))t^d$. Again \eqref{eq:volume} gives $\#\partial_{\textnormal{e}} B(t) \geq c_3t^{d-1}$, and so combining these two inequalities we have
		\[
		\#\partial_{\textnormal{e}} B(t) \geq C_9[(1-F_Y(t))\vee t^{-1}]t^d.
		\] 
	\end{enumerate}
	Hence we have almost surely, $\#\partial_{\textnormal{e}} B(t) \geq C_{10}[(1-F_Y(t))\vee t^{-1}]t^d$ for all large $t$.
	
	It now remains to show almost surely,  \eqref{eq:L_nlowerbound} holds for all large $n$. Define $V_n$ to be the set of vertices $v$ in $\mathbf{e}_1+\Ann(R^n, R^{n+1}) \cap 3\Z^d$ such that the event $E_v$ occurs, where $E_v$ is defined by the conjunction of the following conditions:
	\begin{itemize}
		\item[(A)] $v-\mathbf{e}_1\in\mathsf{C}$, and
		\item[(B)] all the nearest-neighbor edges between vertices in $\{z\in\Z^d : \|z-v\|_\infty = 1\}$ are open.
	\end{itemize}
	By choice of $M$ and the FKG inequality \cite[Chapter~2]{Grimmett}, one has $\mathbf{P}(E_v) = \mathbf{P}(E_0)>0$ for all $v$. Letting $K_n = \#V_n$, by Birkhoff's ergodic theorem (applied to the random variables $(\mathbf{1}_{E_v})_{v \in \mathbb{Z}^d}$, there exists $C_1>0$ such that
	\[
	\frac{K_1+\cdots+K_n}{\#(S(R^{n+1})\cap 3\Z^d)} \to C_1
	\]
	as $n\to\infty$ almost surely. This implies
	\begin{equation}
	\label{eq:K_n}
	\frac{K_n}{\#(S(R^n) \cap 3\Z^d)}\to C_2 >0
	\end{equation}
	almost surely.
	
	Note that for any $\lambda_n\in\R$, 
	\begin{equation}
	\label{eq:L_n}
	\{L_{n}\leq \lambda_n\} \subseteq \{K_n \leq C_4R^{nd}\}\cup\{L_{n}\leq \lambda_n, K_n>C_4R^{nd}\}.
	\end{equation}
	When $C_4$ is small (depending on $C_2$ and $d$), then by \eqref{eq:K_n}, almost surely, for all large $n$, the first event on the right of \eqref{eq:L_n} does not occur. The probability that the second event occurs equals
	\begin{equation}\label{eq: decomposition}
	\sum_{V\subseteq \mathbf{e}_1+\Ann(R^n,R^{n+1})\cap 3\Z^d, \# V > C_4R^{dn}} \prob(L_{n}\leq \lambda_n | V_n = V) \prob(V_n = V).
	\end{equation}
	For a given finite $V\subseteq \Z^d$, let $N_{V,n}$ be the number of $v\in V$ such that all edges incident to $v$ have edge-weights $>4D_4MR^{n+1}$. Then
	\begin{equation}\label{eq: before_lemma}
	\prob(L_{n}\leq \lambda_n|V_n=V)  \leq \prob(N_{V,n}\leq \lambda_n| V_n=V).
	\end{equation}
	\begin{lem}[Shielding lemma]\label{lem: independence}
		For a given finite $V\subseteq 3\mathbb{Z}^d$, the random variable $N_{V,n}$ and the event $\{V_n=V\}$ are independent.
	\end{lem}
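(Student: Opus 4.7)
The plan is to show that $\{V_n=V\}$ is measurable with respect to $\mathcal{G} := \sigma(t_e : e \text{ has no endpoint in } V)$, while $N_{V,n}$ is clearly measurable with respect to the complementary $\sigma$-algebra $\sigma(t_e : e \text{ is incident to } V)$. Since the $t_e$ are i.i.d.\ and the two edge sets are disjoint, these $\sigma$-algebras are independent and the lemma follows. The content is the $\mathcal{G}$-measurability, which I would prove via a shielding argument.

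First I would observe $\{V_n=V\} \subseteq \bigcap_{v \in V} B(v)$, where $B(v)$ denotes condition (B) in the definition of $E_v$: if any $B(v)$ fails then $E_v$ fails and $v \notin V_n$, contradicting $V_n=V$. Next, because vertices of $V$ all lie in a common coset of $3\Z^d$, any two distinct points of $V$ are at $\ell^\infty$-distance at least $3$; hence the ``surfaces'' $\{z : \|z-v\|_\infty = 1\}$ for distinct $v \in V$ are pairwise disjoint and each is disjoint from $V$ itself. The same disjointness from $V$ holds for the surface of any $w \notin V$ in the allowed index set, since $w$ lies in the same coset as $V$ and $\|w-v\|_\infty$ is then $0$ or $\geq 3$. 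Consequently every surface edge entering any $B(v)$ or $B(w)$ in the definition of $\{V_n=V\}$ has no endpoint in $V$, so the ``(B)-part'' of $\{V_n=V\}$ is $\mathcal{G}$-measurable.

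The main step concerns condition (A), which depends globally on the infinite open cluster $\mathsf{C}$. I would establish the \emph{shielding claim}: on the event $\bigcap_{v \in V} B(v)$, a vertex $u \in \Z^d \setminus V$ lies in $\mathsf{C}$ if and only if there is an infinite open self-avoiding path starting at $u$ whose edges all have both endpoints in $\Z^d \setminus V$. The ``if'' direction is immediate; for the converse, suppose $u \in \mathsf{C}$ and fix an infinite open self-avoiding path $\pi$ from $u$. For each $v \in V$ actually visited by $\pi$, the (unique) visit has the form $s_{\mathrm{in}} \to v \to s_{\mathrm{out}}$ with $s_{\mathrm{in}}, s_{\mathrm{out}}$ on the surface of $v$; on $B(v)$ the surface is connected via open surface edges (this uses $d \geq 2$), so I splice out the visit by replacing the two-edge segment through $v$ by an open surface path from $s_{\mathrm{in}}$ to $s_{\mathrm{out}}$. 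Iterating over the finitely many $v \in V$ visited by $\pi$ produces an open walk from $u$ to infinity whose edges all lie outside $V$, from which I extract an infinite self-avoiding subpath. Because the surfaces of distinct $v \in V$ are disjoint and each is disjoint from $V$, the surface edges introduced when rerouting around one $v$ never touch another element of $V$, so the iterative splicing remains consistent.

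Finally I apply the claim to $u = v - \mathbf{e}_1$ for $v \in V$ and to $u = w - \mathbf{e}_1$ for $w$ in the index set but not in $V$. Since $V \subseteq \mathbf{e}_1 + 3\Z^d$, such $u$ lie in $3\Z^d$, which is disjoint from the coset containing $V$, so $u \notin V$. This shows that on $\bigcap_{v \in V} B(v)$, every condition (A) in $\{V_n=V\}$ is $\mathcal{G}$-measurable. Combined with the (B)-part, $\{V_n=V\} \cap \bigcap_{v \in V} B(v)$ is $\mathcal{G}$-measurable; and since $\{V_n=V\}$ is contained in that intersection, $\{V_n=V\}$ itself is $\mathcal{G}$-measurable, as required. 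The main anticipated obstacle is making the iterative rerouting in the shielding claim fully rigorous, but this is purely combinatorial and the $\ell^\infty$-separation $\geq 3$ between elements of $V$ provides exactly the geometry needed to keep the rerouted walks from reintroducing edges incident to $V$.
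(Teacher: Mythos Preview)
Your proposal is correct and follows essentially the same route as the paper: both hinge on the shielding claim that, on the event where all $v\in V$ satisfy condition (B), an infinite open path from any $u\notin V$ can be rerouted around each visited $v\in V$ using the open surface edges, so that membership of $v-\mathbf{e}_1$ in $\mathsf{C}$ becomes a function of the edges not incident to $V$. The only cosmetic difference is that you phrase the conclusion as $\mathcal{G}$-measurability of $\{V_n=V\}$, whereas the paper introduces the modified condition (A$'$) (``$v-\mathbf{e}_1\to\infty$ by an open path avoiding $V$'') and computes $\mathbf{P}(N_{V,n}=r,\,V_n=V)$ directly via $\{A_n\cap B_n=V\}=\{A_n'(V)\cap B_n=V\}$; the underlying combinatorics are identical.
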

	\begin{proof}
		Recall that $V_n$ is the set of $v \in \Ann_n:= \mathbf{e}_1 + \text{Ann}(R^n,R^{n+1}) \cap 3\mathbb{Z}^d$ satisfying the conditions (A) and (B) above.
		Let $A_n$ and $B_n$ be the set of $v \in \Ann_n$ satisfying (A) and (B) respectively. For a given $V \subseteq \Ann_n$, let $A_n'(V)$ be the set of $v\in \Ann_n$ satisfying the condition
		\begin{enumerate}
			\item[(A')] $v-\mathbf{e}_1\to\infty$ via an open path without touching $V$.
		\end{enumerate}
		\begin{figure}[h]
			\label{fig: shield}
			\centering
			\includegraphics[width=.6\textwidth]{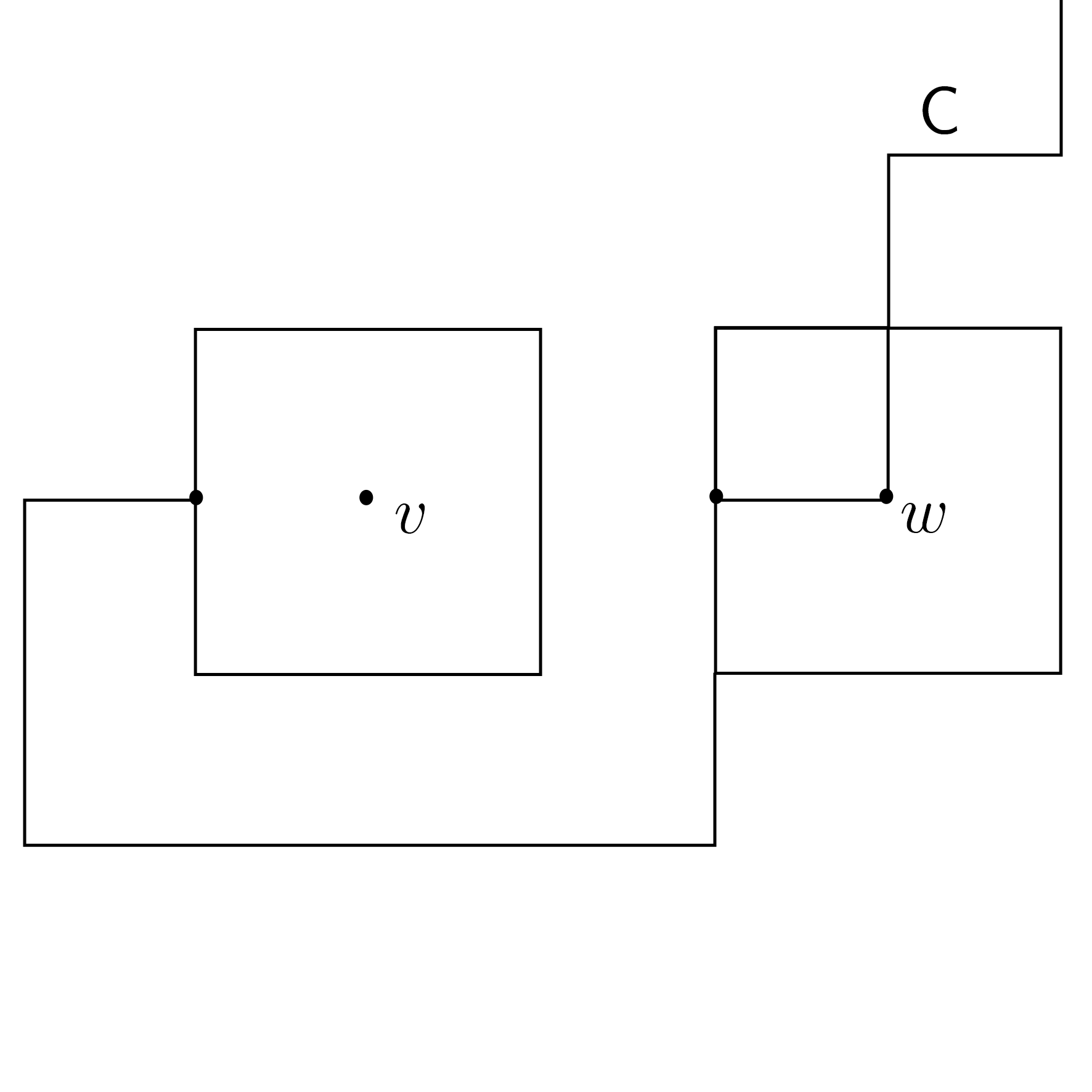}
			\caption{Depiction of the proof of Lemma~\ref{lem: independence}. All the lines represent edges in the infinite open cluster $\mathsf{C}$. $v-\mathbf{e}_1$ can be connected to $\infty$ using edges incident to $w$, but the ``shield'' surrounding $w$ can ``reroute'' the path.}
		\end{figure}
		
		We claim that for a given $V \subseteq \Ann_n$, on the event $\{V \subseteq B_n\}$, the sets $A_n$ and $A_n'(V)$ are equal. Clearly (A') implies (A), so if $V \subseteq B_n$, then $A_n'(V) \subseteq A_n$. On the other hand, if $V \subseteq B_n$ and $v \in A_n$, then because the edges in (B) form ``shields'' around all $w\in V$, any infinite open path starting from $v-\mathbf{e}_1$ and taking an edge incident to a $w \in V$ may be ``rerouted'' around $w$, using edges described in (B) instead of those incident to $w$. (See Figure~\ref{fig: shield}.) Here we are using the fact that $v-\mathbf{e}_1$ is not in $V$ (as $v$ and $V$ are in the lattice $3\mathbb{Z}^d$) and so any such path does not begin at a vertex of $V$. Therefore in this setting, any $v$ satisfying (A) also satisfies (A') and this shows the claim.
		
		Now, the random variable $N_{V,n}$ and the conditions (A') and (B) depend on two disjoint sets of edges, and hence they are independent: for any $r \in \mathbb{R}$ and $V \subseteq \Ann_n$,
		\begin{align*}
		\mathbf{P}(N_{V,n} = r, V_n = V) & = \mathbf{P}(N_{V,n} = r, A_n \cap B_n =V) \\
		&= \mathbf{P}(N_{V,n}=r, A_n'(V) \cap B_n = V) \\
		&= \mathbf{P}(N_{V,n}=r)\mathbf{P}(A_n'(V) \cap B_n = V) \\
		&= \mathbf{P}(N_{V,n}=r) \mathbf{P}(V_n=V).
		\end{align*}
	\end{proof}
	Returning to \eqref{eq: before_lemma}, by Lemma~\ref{lem: independence},
	\begin{equation}\label{eq: conditional}
	\prob(N_{V,n} \leq \lambda_n| V_n=V) = \prob(N_{V,n} \leq \lambda_n).
	\end{equation}
	Note that $N_{V,n}$ is just a sum of i.i.d. Bernoulli random variables (say $N_{V,n} = X_1+\cdots+X_k$) with parameter $1-F_Y(t_n)$, and hence there exist $C_5, C_6>0$ (one can take $C_5 = C_4/2$) such that if $\#V > C_4 R^{dn}$, then
	\begin{enumerate}
		\item $\E N_{V,n} = k(1-F_Y(t_n)) \geq 2C_5 (1-F_Y(t_n))R^{nd}$,
		\item $\sum_{i=1}^k \E X_i^2 = \sum_{i=1}^k \E X_i \leq C_6 (1-F_Y(t_n))R^{nd}$.
	\end{enumerate}
	Thus by Bernstein's inequality,
	\begin{align}
	&\phantom{\leq\;\;}\prob(N_{V,n} \leq \E N_{V,n} - C_5[(1-F_Y(t_n))\vee R^{-n}]R^{nd}) \nonumber\\&\leq 2\exp\left(- \frac{C_5^2[(1-F_Y(t_n))\vee R^{-n}]R^{nd}}{2(C_5/3+C_6)}\right) \nonumber\\
	&\leq 2\exp\left(- \frac{C_5^2R^{(d-1)n}}{2(C_5/3+C_6)}\right). \label{eq: pizza}
	\end{align}
	
	Combining \eqref{eq: decomposition}, \eqref{eq: before_lemma}, \eqref{eq: conditional}, and \eqref{eq: pizza}, and using item 2 above, we see that if we put
	\[
	\lambda_n = 2C_5(1-F_Y(t_n))R^{nd} - C_5[(1-F_Y(t_n)) \vee R^{-n}] R^{nd},
	\]
	then
	\begin{align*}
	\sum_n \mathbf{P}(L_n \leq \lambda_n, K_n > C_4R^{nd}) &\leq \sum_n \sum_{\#V > C_4R^{dn}} \mathbf{P}(N_{V,n} \leq \lambda_n) \mathbf{P}(V_n = V) \\
	&\leq \sum_n \left[2 \exp\left( - \frac{C_5^2R^{(d-1)n}}{2(C_5/3+C_6)} \right) \sum_V \mathbf{P}(V_n=V)\right] < \infty.
	\end{align*}
	
	Hence, by the remarks below \eqref{eq:L_n} and the Borel-Cantelli lemma, almost surely,
	\[
	L_{n}\geq 2C_5(1-F_Y(t_n))R^{nd} - C_5[(1-F_Y(t_n))\vee R^{-n}]R^{nd}
	\]
	for all large $n$. This proves the desired inequality \eqref{eq:L_nlowerbound} and hence Theorem~\ref{thm:holes}.
	
	\subsection{Proof of Theorem~\ref{thm:curv}}
	In this section, we will assume that $\prob(t_e=0)<p_c$, $\E e^{\alpha t_e}<\infty$ for some $\alpha>0$ and that $\mathcal{B}$ satisfies the uniform curvature condition. We will need to control geodesics, so we first show the following lower bound on the Busemann-type function $T(0,kx) - T(0,\ell x)$:
	\begin{prop}
		\label{prop: unif_curv}
		There exist $C_1, C_2>0$ and $C_3\in (0,1)$ such that for any $x\in\R^d$ with $\|x\|_2=1$ and for any $k, \ell\geq 0$ with $k\geq \ell$,
		\[
		\prob(T(0,kx)-T(0,\ell x)\geq C_1(k-\ell))\geq 1-C_2e^{-(k-\ell)^{C_3}}.
		\]
	\end{prop}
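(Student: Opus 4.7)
Set $M=k-\ell$. The conclusion is trivial when $M$ is bounded (choose $C_1$ small, $C_2$ large), so assume $M$ is large. The approach follows the sketch in Section~1.2.3, which uses Newman's method for geodesic wandering under uniform curvature.

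The main technical step is to prove that, with probability at least $1-C_2 e^{-M^{C_3}}$, every geodesic $\pi$ from $0$ to $kx$ contains a vertex $y$ within Euclidean distance $M^{1-\delta}$ of $\ell x$, for some $\delta\in(0,1/2)$ depending on the curvature exponent $\eta$. This is a standard Newman-type argument built from two ingredients. First, under the exponential moment assumption $\E e^{\alpha t_e}<\infty$, Alexander's bound \eqref{eq:alexander} yields pointwise concentration $|T(u,v)-g(v-u)|\leq C\sqrt{\|v-u\|}\log\|v-u\|$ with probability at least $1-e^{-c\sqrt{\|v-u\|}/\log\|v-u\|}$, which is translation invariant. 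Second, the uniform curvature condition of Definition~\ref{def: curvature}, applied to a point $y$ near the chord $\{\lambda kx:\lambda\in[0,1]\}$, gives a quantitative convexity gain
\[
g(y)+g(kx-y)-g(kx)\geq C\,\frac{\mathrm{dist}(y,[0,kx])^{\eta}}{M^{\eta-1}}.
\]
If some geodesic contained no vertex close to $\ell x$, then for the vertex $y\in\pi$ nearest $\ell x$ the concatenation $T(0,kx)=T(0,y)+T(y,kx)$ combined with concentration would force $g(y)+g(kx-y)\leq g(kx)+O(\sqrt{M}\log M)$, contradicting the curvature gain whenever $\|y-\ell x\|\geq M^{1-\delta}$ for $\delta$ small enough (depending on $\eta$). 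A union bound over lattice vertices $y$ in a polynomially large box (itself shown to contain $\pi$ by applying the shape theorem at both endpoints) then yields the wandering bound with stretched-exponential probability.

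Granted the wandering bound, take a vertex $y\in\pi$ with $\|y-\ell x\|\leq M^{1-\delta}$. Since $\pi$ is a geodesic through $y$ and $T(0,\ell x)\leq T(0,y)+T(y,\ell x)$,
\[
T(0,kx)-T(0,\ell x)\geq T(y,kx)-T(y,\ell x).
\]
A second application of concentration centered at $y$ gives $T(y,kx)\geq g(kx-y)-O(\sqrt{M}\log M)$ and $T(y,\ell x)\leq g(\ell x-y)+O(\sqrt{M}\log M)$. Using that $g$ is a norm, $g(kx-y)\geq (k-\ell)g(x)-g(\ell x-y)$; the equivalence of norms on $\R^d$ (since $\mathcal{B}$ is compact with nonempty interior) bounds $g(\ell x-y)\leq C\|\ell x-y\|\leq CM^{1-\delta}$, while $g(x)\geq c>0$ uniformly over unit vectors. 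Hence
\[
T(y,kx)-T(y,\ell x)\geq cM-O(M^{1-\delta})-O(\sqrt{M}\log M)\geq C_1 M
\]
for $M$ large, on an event of probability at least $1-C_2 e^{-M^{C_3}}$ after union-bounding over the wandering event and the two concentration events.

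The main obstacle is the wandering bound in the first step: uniform curvature enters only there, and one must carefully balance the wandering exponent $1-\delta$, the curvature exponent $\eta$, and the stretched-exponential probability. Once the wandering bound is established, the remaining steps are routine consequences of the shape theorem and Alexander's concentration inequality.
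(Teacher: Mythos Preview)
Your overall strategy matches the paper's: obtain a wandering bound showing the geodesic from $0$ to $kx$ passes within $M^{1-\delta}$ of $\ell x$, then estimate $T(y,kx)-T(y,\ell x)$ for a vertex $y$ near $\ell x$. However, your wandering argument has a scale error that makes it fail when $k,\ell\gg M=k-\ell$.

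You write that concentration forces $g(y)+g(kx-y)\leq g(kx)+O(\sqrt{M}\log M)$. But the concentration you invoke gives $|T(u,v)-g(v-u)|\leq C\sqrt{\|v-u\|}\log\|v-u\|$, and in the identity $T(0,y)+T(y,kx)=T(0,kx)$ the three passage times are at scales $\ell$, $M$, and $k$ respectively. The combined error is therefore $O(\sqrt{k}\log k)$, not $O(\sqrt{M}\log M)$. Comparing this with your curvature gain $Ch^\eta/M^{\eta-1}$ (which, incidentally, is correct as a lower bound: for $y$ near $\ell x$ the dominant contribution comes from the shorter segment of length $M$) only yields $h\lesssim (M^{\eta-1}\sqrt{k}\log k)^{1/\eta}$, which exceeds $M$ once $k\gg M$. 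So the bound $\|y-\ell x\|\leq M^{1-\delta}$ does not follow from your setup.

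The paper avoids this by re-centering Newman's argument at $x_0=[kx]$ rather than at $0$: it views $\gamma$ as a geodesic from $x_0$ to $0$ and applies Newman's cone events $G_z$ shifted so that $x_0$ plays the role of the origin. The first crossing $y$ of the hyperplane through $\ell x$ then lies at distance $\approx M$ from $x_0$, and Newman's angle bound $\theta(y-x_0,-x_0)\leq C\|y-x_0\|^{-\delta}$ (which only uses concentration of $T(x_0,\cdot)$ at scale $M$, with error $O(\sqrt{M}\log M)$) gives $\|y-\ell x\|\leq CM^{1-\delta}$ uniformly in $k,\ell$.

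A minor second difference: once the wandering bound is in hand, you reuse Alexander-type concentration to estimate $T(y,kx)$ and $T(y,\ell x)$. This works after a union bound over the possible $y$, but the paper instead uses more elementary tools---Kesten's linear lower bound (Lemma~\ref{lem:largedeviation}) for $T(y,x_0)\geq c\|y-x_0\|_2$, and a direct exponential-moment bound on a deterministic path of length $O(M^{1-\delta})$ for $T(y,\ell x)\leq cM/2$---which already give exponential (not just stretched-exponential) tails on that part of the event.
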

	
	Let us begin with some definitions introduced in \cite{newman}:
	\begin{df}
		With $\eta$ from the curvature assumption, Definition~\ref{def: curvature}, let $\delta \in \left( 0, \frac{1}{2\eta}\right)$.
		\begin{enumerate}
			\item For $x,y\in\R^d\setminus\{0\}$, let $\theta(x,y)$ be the angle (in $[0,\pi]$) between $x$ and $y$.
			\item For a vertex $y\neq 0$, define
			\[
			C_y = \{x\in\Z^d: g(x)\in[g(y) - g(y)^{1-\eta\delta}, 2g(y)], \theta(x,y)\leq g(y)^{-\delta}\}.
			\]
			\item $\mathrm{out}(x)$ is the set of vertices $z$ such that $T(0,z) = T(0,x)+T(x,z)$, or equivalently, the set of vertices in some geodesic from $0$ that goes through $x$.
			\item Define $\partial_{\mathrm{i}}C_y$ (resp. $\partial_{\mathrm{o}}C_y$) to be the set of boundary vertices in $C_y$ with $g(x)<g(y) - g(y)^{1-\eta\delta}$ (resp. $>2g(y)$). Also define $\partial_{\mathrm{s}}C_y$ to be the set of boundary vertices in $C_y$ with $\theta(x,y)>g(y)^{-\delta}$.
			\item Define $G_y = \{\mathrm{out}(y)\cap(\partial_{\mathrm{i}}C_y \cup \partial_{\mathrm{s}}C_y)\neq \emptyset\}$.
		\end{enumerate} 
	\end{df}
	
	The events $G_y$ help to control geodesic wandering (see \eqref{eq: lasagna} below). 
	\begin{lem}
		There exist constants $C_4, C_5>0$ such that 
		\[
		\prob(G_y) \leq C_4\exp(-C_5\|y\|_2^{\frac{1}{2}-\eta\delta}).
		\]
	\end{lem}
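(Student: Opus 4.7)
The plan is to apply a union bound over all vertices $z$ in the (polynomially-sized) sets $\partial_{\mathrm{i}}C_y$ and $\partial_{\mathrm{s}}C_y$, and for each such $z$ show that $\mathbf{P}(z \in \mathrm{out}(y))$ is stretched-exponentially small in $\|y\|_2$. First I would note that $C_y \subseteq \{x : \|x\|_2 \leq C\|y\|_2\}$ for some constant, so $\#\partial_{\mathrm{i}}C_y + \#\partial_{\mathrm{s}}C_y \leq C_6 \|y\|_2^{d-1}$. It therefore suffices to show that for every $z \in \partial_{\mathrm{i}}C_y \cup \partial_{\mathrm{s}}C_y$,
\[
\mathbf{P}(z \in \mathrm{out}(y)) \leq C_7 \exp\!\bigl(-C_8 \|y\|_2^{1/2 - \eta\delta}\bigr),
\]
possibly with worse constants; the polynomial prefactor from the union bound is absorbed into a slightly smaller exponent.

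Next I would use passage time concentration. Under the exponential-moment hypothesis, Kesten's concentration estimate and Alexander's shape-theorem rate \eqref{eq:alexander} together yield, for any vertex $w$ with $\|w\|_2 \leq C\|y\|_2$,
\[
\mathbf{P}\bigl(|T(0,w) - g(w)| > \|y\|_2^{1/2+\varepsilon}\bigr) \leq C \exp\!\bigl(-c \|y\|_2^{\varepsilon}\bigr),
\]
and similarly for $T(y,z)$ in terms of $g(z-y)$, where $\varepsilon > 0$ may be chosen small. Call the event on which all three quantities $T(0,y),\, T(0,z),\, T(y,z)$ are within $\|y\|_2^{1/2+\varepsilon}$ of their $g$-values the event $E_z$; by a union bound $\mathbf{P}(E_z^c) \leq C\exp(-c\|y\|_2^{\varepsilon})$. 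On $E_z \cap \{z \in \mathrm{out}(y)\}$ we have $T(0,z) = T(0,y) + T(y,z)$, and combining the three concentration inequalities gives the deterministic inequality
\[
g(y) + g(z-y) - g(z) \leq 3\|y\|_2^{1/2+\varepsilon}.
\]

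The heart of the argument is to use the uniform curvature condition to produce a \emph{lower} bound on the same deficit $g(y) + g(z-y) - g(z)$ of order $\|y\|_2^{1 - \eta\delta}$, which, since $\delta < 1/(2\eta)$, dominates $\|y\|_2^{1/2+\varepsilon}$ and contradicts the display above. Writing $u = y/g(y)$ and $w = (z-y)/g(z-y)$ (so $g(u) = g(w) = 1$), one has $z = g(y) u + g(z-y) w$, and $z/g(z)$ is a positive scalar multiple of the convex combination $\lambda u + (1-\lambda)w$ with $\lambda = g(y)/(g(y)+g(z-y))$. Applying Definition~\ref{def: curvature} to this convex combination, rearranging, and using $g(z) \leq g(y) + g(z-y)$, I would derive
\[
g(y) + g(z-y) - g(z) \geq C \bigl[\min\{g(z-y)\cdot\|u - z/g(z)\|,\ g(y)\cdot\|w - z/g(z)\|\}\bigr]^{\eta}
\]
up to constants. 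For $z \in \partial_{\mathrm{s}}C_y$ one has $\theta(z,y) \geq g(y)^{-\delta}$, so $\|u - z/g(z)\|_2 \gtrsim g(y)^{-\delta}$, yielding a deficit $\geq C g(y)^{1-\eta\delta}$. For $z \in \partial_{\mathrm{i}}C_y$ the radial constraint $g(z) \leq g(y) - g(y)^{1-\eta\delta}$ combined with the angular bound $\theta(z,y) \leq g(y)^{-\delta}$ forces $z$ to lie far from the ray through $y$ when projected onto the level set $g = g(z)$, again producing a deficit of the same order $g(y)^{1-\eta\delta}$.

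The main obstacle is the curvature step: carrying out the quantitative bookkeeping to turn Definition~\ref{def: curvature}, which is stated for unit vectors, into the desired scale-invariant lower bound $g(y) + g(z-y) - g(z) \geq Cg(y)^{1-\eta\delta}$ uniformly over $z \in \partial_{\mathrm{i}}C_y \cup \partial_{\mathrm{s}}C_y$, particularly for the inner boundary where the radial and angular pieces interact. Once this is in hand, combining with Step~2 gives $\mathbf{P}(E_z \cap \{z \in \mathrm{out}(y)\}) = 0$, so $\mathbf{P}(z \in \mathrm{out}(y)) \leq \mathbf{P}(E_z^c) \leq C\exp(-c\|y\|_2^{\varepsilon})$, and the union bound over the $O(\|y\|_2^{d-1})$ candidate $z$ yields the claimed estimate with $C_5 = c/2$ (say), after choosing $\varepsilon$ slightly smaller than $\tfrac{1}{2} - \eta\delta$.
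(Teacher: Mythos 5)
Your outline is essentially the proof the paper invokes: the paper's own ``proof'' is just a citation of Newman's Proposition~3.2 (with $2\delta$ replaced by $\eta\delta$), and that argument is exactly your combination of Kesten--Alexander concentration of $T$ around $g$ at scale $\|y\|_2^{1/2+o(1)}$ with a curvature-driven deficit $g(y)+g(z-y)-g(z)\gtrsim g(y)^{1-\eta\delta}$ for $z\in\partial_{\mathrm{i}}C_y\cup\partial_{\mathrm{s}}C_y$ on the event $z\in\mathrm{out}(y)$, followed by a union bound over the polynomially many boundary vertices. Two small repairs: for $z\in\partial_{\mathrm{i}}C_y$ no curvature (and no ``distance from the ray'') is needed --- indeed your mechanism fails for $z$ on the axis --- since the triangle inequality alone gives $g(y)+g(z-y)-g(z)\geq 2\bigl(g(y)-g(z)\bigr)\geq 2g(y)^{1-\eta\delta}$; and to get the stated exponent $\tfrac12-\eta\delta$ exactly, take the deviation threshold in the concentration step to be $c\,\|y\|_2^{1-\eta\delta}$ (so $x\asymp\|y\|_2^{1/2-\eta\delta}$ in Kesten's inequality) rather than $\|y\|_2^{1/2+\varepsilon}$ with $\varepsilon$ strictly below $\tfrac12-\eta\delta$, which would yield a strictly weaker stretched-exponential bound than the lemma asserts.
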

	\begin{proof}
		The proof is identical to that of \cite[Proposition~3.2]{newman} with $2\delta$ replaced by $\eta \delta$.
	\end{proof}
	
	\begin{proof}[Proof of Proposition~\ref{prop: unif_curv}]
		It will suffice to show the result for $k-\ell$ sufficiently large (independently of $x$). Let $x\in \R^d$ with $\|x\|_2=1$ and let $k, \ell\geq 0$ with $k> \ell$. Let $\gamma = ([kx]=x_0,e_1,x_1,\ldots,e_r,x_r=0)$ be a geodesic from $[kx]$ (the point of $\mathbb{Z}^d$ with $kx \in [kx]+[0,1)^d$) to $0$. For $z\in\Z^d$, define $T_z$ to be translation operator by $z$; that is, $T_z((t_e)) = (t_{e-z})$. Define $G_z' = T_{x_0}G_{z-x_0}$ to be the shifted $G$ event. Then we have
		\[
		\prob(G_z') \leq C_4e^{-C_5\|x_0-z\|_2^{\frac{1}{2}-\eta\delta}}
		\] 
		and if we define
		\[
		A_M = \{G_z'^c \text{ occurs for all $z$ with $\|z-x_0\|_2\geq M$}\},
		\]
		then
		\begin{equation}\label{eq: taco}
		\prob(A_M) \geq 1-C_6e^{-C_7M^{\frac{1}{2}-\eta\delta}}.
		\end{equation}
		
		Let $H$ be the hyperplane which is perpendicular to $x_0$ and passes through $\ell x$. Let $y$ be the first vertex in $\gamma$ contained in $H$ or in the component of $H^c$ containing $0$. Now set $M=(k-\ell)/2$ so that 
		\begin{equation}\label{eq: taco_3}
		\|y-x_0\|_2\geq M
		\end{equation}
		(if $k-\ell$ is large). Using this and the proof of \cite[Proposition~3.2]{newman}, one can show that on $A_M$, for some $C_8$ independent of $x,k,\ell$, one has
		\begin{equation}\label{eq: lasagna}
		|\theta(y-x_0, x_r-x_0)| \leq C_8\|y-x_0\|_2^{-\delta}.
		\end{equation}
		\begin{figure}[h]
			\centering
			\includegraphics[width=.6\textwidth]{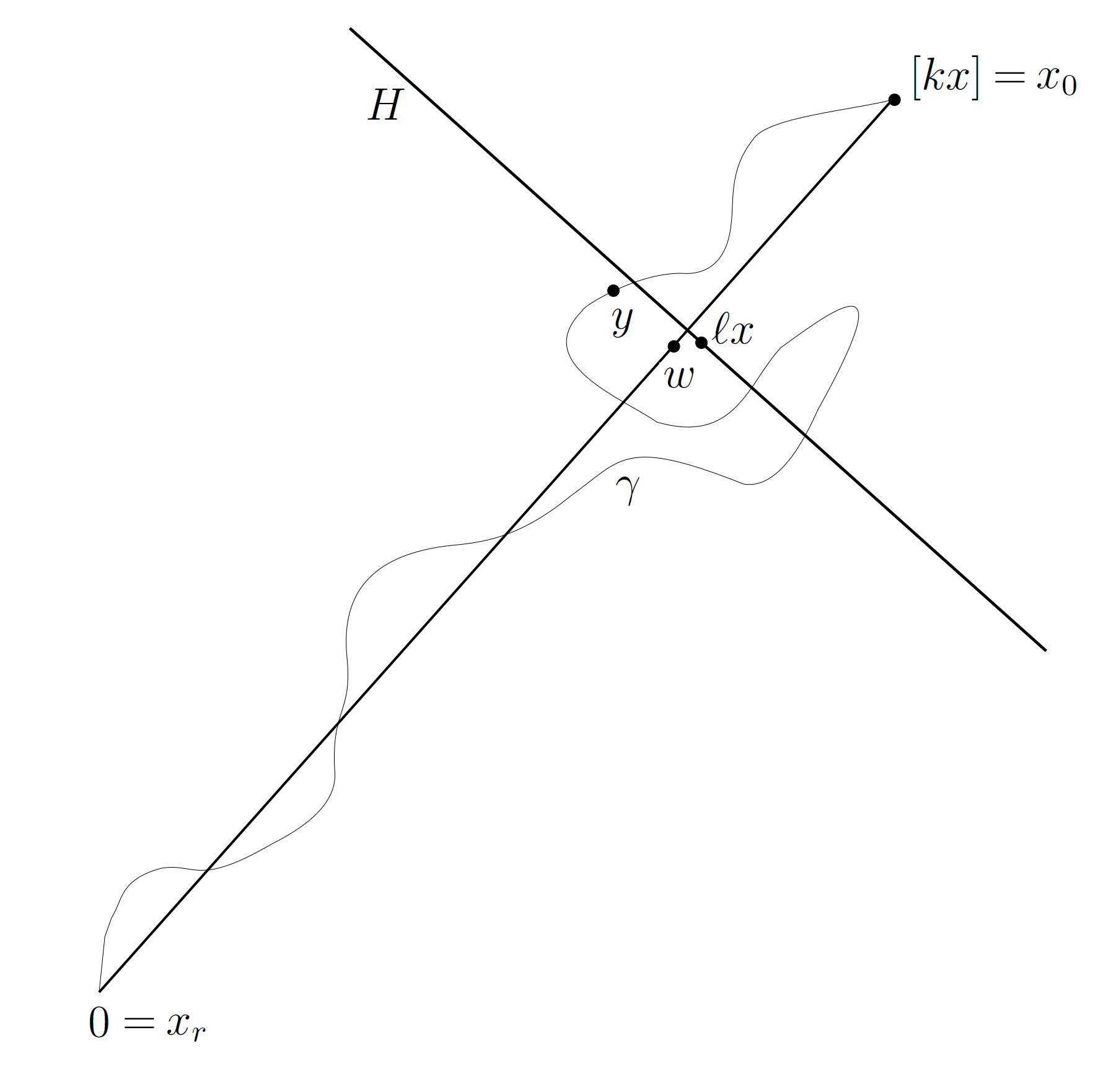}
			\caption{Depiction of the proof of Proposition~\ref{prop: unif_curv}. $\gamma$ is a geodesic from $[kx]$ to $0$ and $y$ is the first vertex of $\gamma$ after $\gamma$ passes through $H$. Because of the curvature assumption, $y$ is close to $\ell x$.}
			\label{fig: fig_2}
		\end{figure}
		Let $w$ be the orthogonal projection of $y$ to the line spanned by $x_0$. Then clearly we have $\|w-\ell x\|_2, \|x_0-kx\|_2\leq C(d)$ for some $C(d)$ depending only on $d$. So
		\begin{align}
		\|y-\ell x\|_2 &\leq \|y-w\|_2 + \|w - \ell x\|_2 \nonumber \\
		&\leq \|x_0-w\|_2\tan{|\theta(y-x_0, x_r-x_0)|} + C(d) \nonumber \\
		&\leq (\|x_0-kx\|_2+\|kx-\ell x\|_2+\|\ell x-w\|_2)\tan{|\theta(y-x_0, x_r-x_0)|} + C(d)\nonumber \\
		&\leq C_9(k-\ell)^{1-\delta}\nonumber\\
		&= C_{10}M^{1-\delta} \label{eq: taco_2}
		\end{align}
		if $k-\ell$ is sufficiently large. Let $B_{M,D}$ be the event that
		\begin{enumerate}
			\item for any $u$ with $\|u-x_0\|_2\geq M$, $T(u,x_0) \geq D\|u-x_0\|_2$,
			\item for any $u$ with $\|u-\ell x\|_2\leq C_{10}M^{1-\delta}$, $T(\ell x, u) \leq\frac{D}{2}M$.
		\end{enumerate}
		\begin{lem}\label{lem: lem_head}
			There exist $C_{11}, C_{12}, C_{13}>0$ such that $\prob(B_{M, C_{11}}^c)\leq C_{12}e^{-C_{13}M}$. 
		\end{lem}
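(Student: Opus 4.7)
The plan is to decompose $B_{M, D}^c$ into the union of the events that conditions (1) and (2) of the definition fail, and bound each probability by $e^{-cM}$ separately. With $D_1$ as in Lemma~\ref{lem:largedeviation}, the first bound will force $D\leq D_1$, while the second holds for every fixed $D>0$ once $M$ is large; I will thus take $C_{11} = D_1$.

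For (1), I invoke Lemma~\ref{lem:largedeviation}. By translation invariance, for every $u\in\Z^d$,
\[
\prob\bigl(T(u, x_0) < D_1\|u-x_0\|_1\bigr) \leq D_2 e^{-D_3\|u-x_0\|_1}.
\]
Since $\|u-x_0\|_2 \leq \|u-x_0\|_1$, the event $\{T(u,x_0) < D_1\|u-x_0\|_2\}$ is contained in $\{T(u,x_0) < D_1\|u-x_0\|_1\}$, and on the complement of (1) the former occurs for some $u$ with $\|u-x_0\|_1 \geq \|u-x_0\|_2\geq M$. A union bound over $u$, organized by the level $k = \|u-x_0\|_1$ (there are $O(k^{d-1})$ lattice points at each level), gives
\[
\prob\bigl(\exists u\in\Z^d: \|u-x_0\|_2\geq M,\ T(u,x_0) < D_1\|u-x_0\|_2\bigr) \leq \sum_{k\geq M} c\, k^{d-1}\, D_2 e^{-D_3 k} \leq Ce^{-C'M}.
\]

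For (2), I bound $T(\ell x, u) = T([\ell x], u)$ above by the sum of edge-weights along a single fixed axis-parallel lattice path from $[\ell x]$ to $u$; this path contains at most $n \leq 2\sqrt{d}\,C_{10} M^{1-\delta}$ edges for large $M$. Under the exponential-moment hypothesis, choosing $\lambda\in(0,\alpha)$ and applying the exponential Markov inequality yields, for every fixed $D>0$,
\[
\prob\bigl(T(\ell x, u) > (D/2)M\bigr) \leq e^{-\lambda(D/2)M}\bigl(\E e^{\lambda t_e}\bigr)^n = \exp\bigl(-\lambda(D/2)M + O(M^{1-\delta})\bigr) \leq e^{-cM}
\]
for $M$ sufficiently large. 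The number of candidate $u\in\Z^d$ with $\|u-\ell x\|_2\leq C_{10}M^{1-\delta}$ is of polynomial order $M^{d(1-\delta)}$, so a further union bound preserves the exponential decay. Combining this with the bound for (1) gives $\prob(B_{M,D_1}^c)\leq C_{12}e^{-C_{13}M}$. The argument is essentially routine; the only watchpoint is that the polynomial prefactors from the two union bounds are dominated by the exponential rates, which merely forces $M$ to exceed a fixed threshold and therefore only affects the constants $C_{12}, C_{13}$.
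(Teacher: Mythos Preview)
Your proof is correct and follows essentially the same route as the paper's: both handle condition~(1) via Lemma~\ref{lem:largedeviation} (the paper phrases the $\ell^1$-to-$\ell^2$ passage as ``equivalence of norms'' rather than the explicit inequality $\|\cdot\|_2\le\|\cdot\|_1$) and condition~(2) via the exponential-moment bound applied to a deterministic path of length $O(M^{1-\delta})$. If anything, you are more careful than the paper, which leaves the union bounds over $u$ implicit; your explicit summations and polynomial counts make the exponential decay transparent.
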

		\begin{proof}
			By Lemma~\ref{lem:largedeviation} and the fact that all norms on $\R^d$ are equivalent, there exist constants $C_{14}, C_{15}>0$ such that for all $z\in \Z^d$,
			\[
			\prob(T(0,z) \leq C_{11}\|z\|_2)\leq C_{14}e^{-C_{15}\|z\|_2}.
			\]
			
			This implies for any $u$ with $\|u-x_0\|_2 \geq M$,
			\begin{equation}\label{eq: tacos_1}
			\prob(T(u,x_0)\leq C_{11}\|u-x_0\|_2) \leq C_{14}e^{-C_{15}\|u-x_0\|_2} \leq C_{14}e^{-C_{15}M}.
			\end{equation}
			
			On the other hand, let $u$ be such that $\|u-\ell x\|_2\leq C_{10} M^{1-\delta}$. Recall that $\E e^{\alpha t_e}<\infty$. By bounding $T(u,\ell x)$ above by the passage time of a deterministic path with $\|u-[\ell x]\|_1$ many edges, we have
			\begin{align*}
			\prob\left(T(u,\ell x)\geq \frac{C_{11}}{2}M\right) &\leq \exp\left(-\frac{C_{11}}{2}\alpha M\right)\left(\E e^{\alpha t_e}\right)^{\|u-[\ell x]\|_1} \\
			&\leq \exp\left(-\frac{C_{11}}{2}\alpha M\right)\left(\E e^{\alpha t_e}\right)^{C_{16}M^{1-\delta}} \\
			&= \exp\left(-\frac{C_{11}}{2}\alpha M + (C_{16}M^{1-\delta})\log{\E e^{\alpha t_e}}\right).
			\end{align*}
			So for $M$ sufficiently large, we have
			\[
			\prob\left(T(u,\ell x)\geq \frac{C_{11}}{2}M\right) \leq e^{-C_{17} M},
			\]
			and combining this with \eqref{eq: tacos_1}, we obtain
			\[
			\prob(B_{M, C_{11}}^c)\leq C_{12}e^{-C_{13}M}. 
			\]
		\end{proof}
		
		On $A_M\cap B_{M, C_{11}}$, we use \eqref{eq: taco_3} and \eqref{eq: taco_2} to estimate
		\begin{align*}
		T(0,kx) - T(0,\ell x) &= T(0,y) + T(y,kx) - T(0,\ell x)\\
		&\geq T(y,kx) - T(y,\ell x)\\
		&=T(y,x_0)-T(y,\ell x)\\
		&\geq C_{11}\|y-x_0\|_2 - \frac{C_{11}}{2}M\\
		&\geq \frac{C_{11}}{4}(k-\ell).
		\end{align*}
		From this, \eqref{eq: taco}, and Lemma~\ref{lem: lem_head}, we conclude
		\[
		\prob(T(0,kx) - T(0,\ell x)\geq C_{1}(k-\ell)) \geq 1-C_{2}e^{-(k-\ell)^{C_{3}}},
		\]
		where $C_3 = \frac{1}{2} - \eta\delta \in (0,1)$.
	\end{proof}
	
	Fix a large $\lambda>0$ and let $F_t$ be the event that for any edge $e \subseteq 2t\mathcal{B}$, $t_e\leq \lambda\log{t}$. Further define, for $n\in\mathbb{N}$, $\tilde{F}_n$ to be the event that for any edge $e \subseteq (2n+2)\mathcal{B}$, $t_e\leq \lambda\log{n}$. Note that for $N\in\mathbb{N}$ large, if $F_t$ does not occur for some $t\geq N$, then $\tilde{F}_n$ does not occur for some $n\geq N$ (namely $n=\lfloor t\rfloor$). Therefore for $\lambda > \frac{1}{\alpha}(d+3)$,
	\begin{align*}
	\prob(F_t \text{ does not occur for some $t\geq N$}) &\leq  \sum_{n \geq N} \sum_{e \subseteq (2n+2)\mathcal{B}} \mathbf{P}(t_e > \lambda \log n ) \\
	&\leq C_{17}\sum_{n\geq N} n^{d}\exp\left(-\alpha\lambda\log{n}\right) \\
	&\leq \frac{C_{18}}{N^{-d-1+\lambda\alpha}} \\
	&\leq \frac{C_{18}}{N^{2}}.
	\end{align*}
	By the Borel-Cantelli lemma, almost surely, $F_t$ occurs for all large $t$.
	
	For $t>1$ and $c>0$ to be determined, define
	\[
	\Ann'(t) = (t+c{t}^{1/2}\log{t})\mathcal{B} \setminus (t-c{t}^{1/2}\log{t})\mathcal{B}.
	\]
	We will decompose $\Ann'(t)$ using rays, and count the intersection of $\partial_\textnormal{e} B(t)$ with these rays.
	
	\begin{lem}
		\label{lem:vectors}
		There exists $C_{19}>0$ such that for each $s>1$, there is a choice of at most $C_{19}s^{d-1}$ unit vectors $v$ such that each cube $y+[0,1)^d$, $y \in \mathbb{Z}^d$, that is completely contained in $s\mathcal{B}$ is intersected by at least one of the rays $S_{v} = \{tv: t\geq 0\}$.
	\end{lem}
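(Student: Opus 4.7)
My approach is to exhibit the required vectors as (essentially) an $\epsilon$-net on the Euclidean unit sphere $S^{d-1}$, with $\epsilon$ chosen proportional to $1/s$. The key geometric input is that since $\mathcal{B}$ is compact, there is $R>0$ with $\mathcal{B}\subseteq\{x\in\mathbb{R}^d:\|x\|_2\leq R\}$, and hence any cube $Q=y+[0,1)^d$ with $y\in\mathbb{Z}^d$ and $Q\subseteq s\mathcal{B}$ has its center $z_y=y+(1/2,\dots,1/2)\in Q\subseteq s\mathcal{B}$ satisfying $\|z_y\|_2\leq Rs$, while $Q$ contains the Euclidean ball of radius $1/2$ about $z_y$ (since the $\ell^\infty$-inradius of a unit cube is $1/2$, and the Euclidean ball of radius $1/2$ is contained in the $\ell^\infty$-ball of radius $1/2$).

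I then set $\epsilon:=1/(4Rs)$ and let $V=\{v_1,\dots,v_N\}$ be an $\epsilon$-net for $S^{d-1}$ in the Euclidean metric. A standard volume comparison on the sphere gives $N\leq C(d)\epsilon^{-(d-1)}\leq C_{19} s^{d-1}$ for a suitable $C_{19}>0$. For a cube $Q\subseteq s\mathcal{B}$ with $\|z_y\|_2\geq 1$, put $u=z_y/\|z_y\|_2$ and pick $v_i\in V$ with $\|v_i-u\|_2\leq \epsilon$. The point $p:=\|z_y\|_2\, v_i$ lies on the ray $S_{v_i}$ and satisfies
\[
\|p-z_y\|_2 \;=\; \|z_y\|_2\,\|v_i-u\|_2 \;\leq\; Rs\cdot\epsilon \;=\; \tfrac14 \;<\; \tfrac12,
\]
so $p$ lies in the Euclidean ball of radius $1/2$ around $z_y$, hence $p\in Q$ and $S_{v_i}\cap Q\neq\emptyset$.

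The remaining cubes are those with $\|z_y\|_2<1$, whose number is bounded by a constant depending only on $d$ (count lattice points in a Euclidean ball of bounded radius). For each such cube $Q$, I add one arbitrary unit vector $v$ with $S_v\cap Q\neq\emptyset$; such a $v$ exists because either $0\in Q$ (in which case any $v$ works) or one may pick a nonzero $p\in Q$ and take $v=p/\|p\|_2$. This enlarges $V$ by a constant, still giving $|V|\leq C_{19}s^{d-1}$ after adjusting the constant.

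The argument is essentially elementary spherical geometry and I do not anticipate any serious obstacle. The only point requiring mild care is calibrating $\epsilon$ so that the worst-case transverse displacement $\epsilon\|z_y\|_2$ is below the $1/2$ Euclidean inradius of a unit cube, together with isolating the cubes near the origin so that the angular approximation (which breaks down as $\|z_y\|_2\to 0$) does not interfere with the count.
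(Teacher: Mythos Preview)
Your proof is correct and takes essentially the same approach as the paper: both arguments place a net of $O(s^{d-1})$ directions and check that the transverse displacement of a cube center from the nearest ray is at most $1/2$. The only cosmetic differences are that the paper places its net on $\partial(s\mathcal{B})$ (then uses similar triangles to scale inward) rather than on the Euclidean unit sphere, and the paper does not single out cubes near the origin since cube centers $y+(1/2,\dots,1/2)$ are never zero, so $g(x_j)>0$ and the radial projection to $\partial(s\mathcal{B})$ is always defined.
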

	\begin{proof}
		Choose any collection of at most $C_{19}s^{d-1}$ points $y_1,\ldots, y_r\in \partial s\mathcal{B}$ such that for any $y\in \partial s\mathcal{B}$, there exists $j\in \{1,\ldots,r\}$ such that $\|y-y_j\|_2\leq  \frac{1}{2}$. Define $v_i = y_i/\|y_i\|_2$, $i=1,\ldots,r$ and, letting $x_1, \ldots, x_k$ be the midpoints of the cubes in $s \mathcal{B}$ ($k$ depends on $s$). Define $z_j = sx_j/g(x_j)$, so that $z_j\in \partial s\mathcal{B}$. Choose $m$ such that $\|z_j - y_m\|_2\leq  \frac{1}{2}$ and note that $y_m\in S_{v_m}$, so the distance between $z_j$ and $S_{v_m}$ is at most $\frac{1}{2}$. Using similar triangles one can see that the distance between $x_j$ and $S_{v_m}$ is also at most $\frac{1}{2}$, which proves the lemma. 
	\end{proof}
	
	For $s = 2t$, let $v_1,\ldots, v_r$ be the corresponding unit vectors from Lemma~\ref{lem:vectors} and define $S_i(t) = S_{v_i}\cap \Ann'(t)$. If $t$ is large then for each $y\in \Z^d\cap \Ann'(t)$, there exists $i$ such that $y+[0,1)^d$ intersects $S_i(t)$. We define
	\[
	\tilde S_i(t) = \{y\in \Z^d\cap \Ann'(t): y+[0,1)^d\text{ intersects $S_i(t)$}\}.
	\] 
	
	For each $y\in \Z^d\cap \Ann'(t)$, let $R_y(t,\rho)$ be the set of points $v\in \tilde S_i(t)$ satisfying $\|v-y\|_2\leq\rho$. 
	\begin{figure}[h]
		\centering
		\includegraphics[width=.75\textwidth]{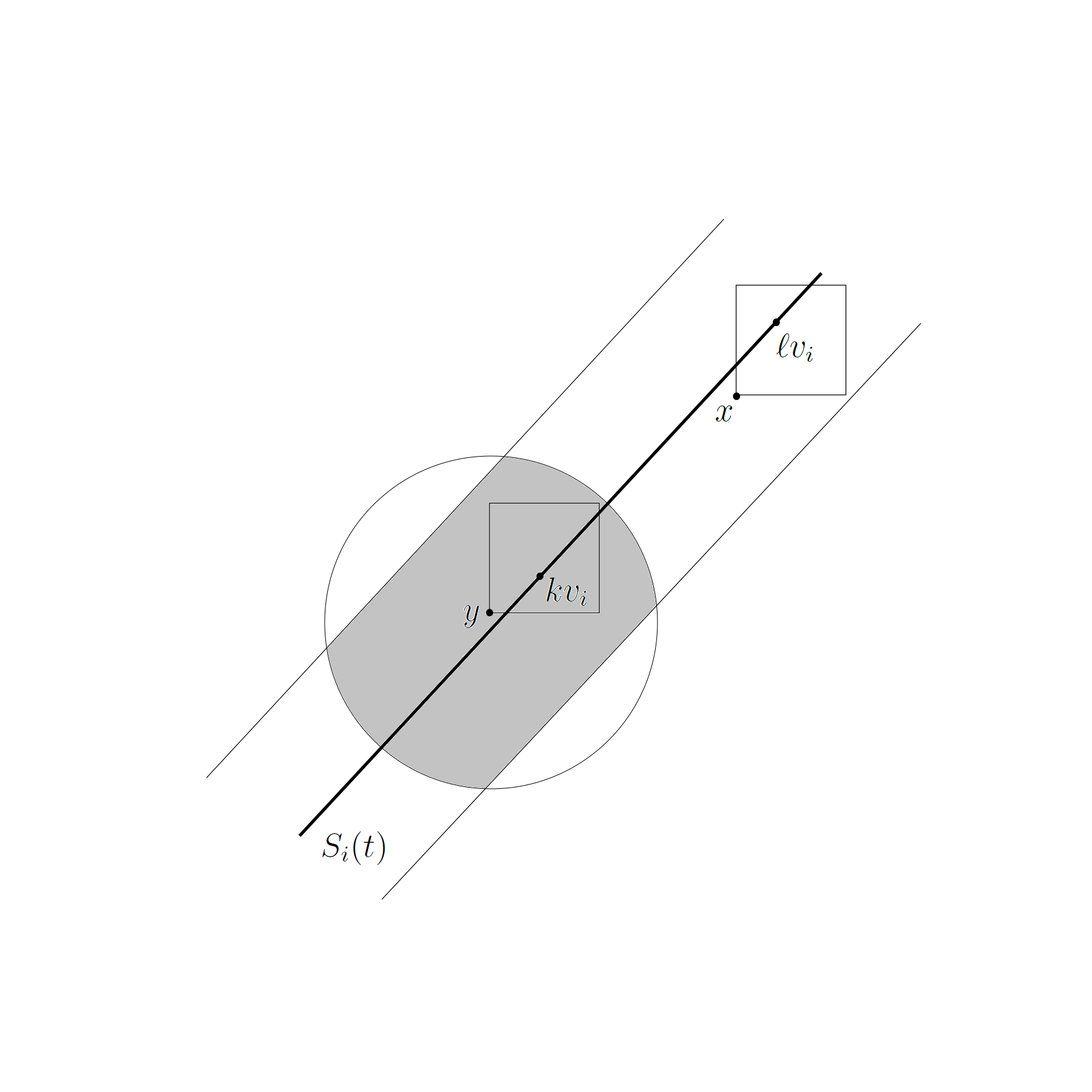}
		\caption{The shaded region is $R_y(t,2\rho)$, and the region between the two light-colored lines represents the lattice points that comprise $\tilde S_i(t)$. The squares depict $x+[0,1)^d$ and $y+[0,1)^d$ for $x,y \in \tilde S_i(t)$. Note that when $x$ and $y$ are far away from each other, then so are the corresponding points $kv_i$ and $\ell v_i$ on the ray $S_i(t)$. Also, $|T(0,x)-T(0,y)| = |T(0,kv_i)-T(0,\ell v_i)|$.}
	\end{figure}
	For $C_1$ from Proposition~\ref{prop: unif_curv}, define $G_y(t,\rho)$ to be the event that for any $x\in \tilde S_i(t)\setminus R_y(t,\rho)$, we have $|T(0,y)-T(0,x)|\geq C_1\rho/2$. Note that when $\rho$ is large enough (depending on the dimension), the inequality $\|kv_i-\ell v_i\|_2 >\rho$ is implied by $\|y_k-y_\ell\|_2>2\rho$, where $y_k, y_\ell\in \tilde S_i(t)$ are such that $kv_i\in y_k+[0,1)^d$ and $\ell v_i\in y_\ell + [0,1)^d$. Therefore, when $\rho$ is large, $G_y(t, 2\rho)$ contains the event that for any $k, \ell$ with $\|kv_i-\ell v_i\|_2 >\rho$ and $kv_i \in y+[0,1)^d$, one has $|T(0,kv_i)-T(0,\ell v_i)| \geq C_1\rho$. As there are at most $O(t^{1/2}\log{t})$ many points in $\tilde S_i(t)$, by Proposition~\ref{prop: unif_curv}, for $t$ sufficiently large,
	\[
	\prob(G_y(t,(\lambda\log{t})^{1/C_3})) \geq 1 - C_{20}t^{1/2}\log{t}\cdot e^{-\lambda\log{t}} = 1-\frac{C_{20}\log{t}}{t^{\lambda-1/2}}.
	\]
	This means
	\[
	\prob\left( \bigcap_{y \in \Ann'(t) \cap \mathbb{Z}^d} G_y(t,(\lambda \log t)^{1/C_3})\right) \geq 1- C_{21}\frac{\log t}{t^{\lambda - 1/2 - d}}.
	\]
	If $\lambda$ is chosen large enough, another discretizing argument (similar in spirit to the one applied to $F_t$ above) can show that $\bigcap_{y\in \Ann'(t)\cap \Z^d}G_y(t,(\lambda\log{t})^{1/C_3})$ occurs for all large $t$ almost surely.
	
	We moreover define
	\[
	E_t = \{(t-ct^{1/2}\log{t})\mathcal{B} \subseteq \bar{B}(t) \subseteq (t+ct^{1/2}\log{t})\mathcal{B}\}.
	\]
	By \eqref{eq:alexander}, for some $c>0$, $E_t$ occurs for all large $t$ almost surely.
	
	Now suppose that $E_t$, $F_t$ and $G_y(t,(\lambda\log{t})^{1/C_3})$ occur for all $y\in\Ann'(t)\cap \Z^d$ and write
	\[
	\#\partial_{\textnormal{e}} B(t)  \leq \sum_{i=1}^r \# [\partial_{\textnormal{e}} B(t)\cap E(\tilde S_i(t))],
	\]
	where $E(\tilde S_i(t))$ is the set of edges with at least one endpoint in $\tilde S_i(t)$. If $\#[\partial_{\textnormal{e}} B(t)\cap E(\tilde S_i(t) )]> 0$, choose the first $e \in \partial_{\textnormal{e}} B(t)\cap E(\tilde S_i(t))$ in some deterministic ordering. Write $e=\{x,y\}$ and assume, without loss of generality, that  $x \in B(t)$. Then we have for $t$ large,
	\[
	t < T(0,y) \leq T(0,x) + t_e \leq t + \lambda\log{t} < t + \frac{C_1}{8} (\lambda\log{t})^{1/C_3}.
	\]
	The third inequality holds because $T(0,x)\leq t$ and we have assumed that $F_t$ occurs. The last inequality uses $C_3 < 1$. If $f=\{w,z\}$ is another edge in $\partial_{\textnormal{e}} B(t)\cap E(\tilde S_i(t))$, we also have the inequality
	\[
	t< T(0,z) < t+\frac{C_1}{8} (\lambda\log{t})^{1/C_3}
	\]
	for $t$ large if $w \in B(t)$. In such a case, we have $|T(0,y) - T(0,z)| < \frac{C_1}{4} (\lambda\log{t})^{1/C_3}$, and furthermore
	\[
	\max_{a \in \{x,y\}, b \in \{w,z\}} |T(0,a) - T(0,b)| < \frac{C_1}{4} (\lambda \log t)^{1/C_3} + t_e + t_f < \frac{C_1}{2} (\lambda \log t)^{1/C_3}.
	\]
	Supposing without loss of generality that $z,y \in \tilde S_i(t)$, this (along with the occurrence of $G_y(t,(\lambda \log t)^{1/C_3})$) implies $z\in R_y(t,(\lambda\log{t})^{1/C_3})$. By construction of $S_i(t)$, there are at most $C_{22}(\lambda\log{t})^{1/C_3}$ many vertices in $R_y(t,(\lambda\log{t})^{1/C_3})$, so
	\[
	\#[\partial_{\textnormal{e}} B(t)\cap E(\tilde S_i(t))] \leq C_{23} (\lambda\log{t})^{1/C_3},
	\]
	which means
	\[
	\#\partial_{\textnormal{e}} B(t)\leq \sum_{i=1}^r \#[\partial_{\textnormal{e}} B(t)\cap E(\tilde S_i(t))] \leq C_{23}(\lambda\log{t})^{1/C_3}r \leq C_{24} (\lambda\log{t})^{1/C_3}t^{d-1}.
	\]
	This inequality holds almost surely for all large $t$, which proves the theorem.
	
	\section{Open questions}
	In Theorem~\ref{thm:main}, we show that under the condition $\prob(t_e=0)<p_c$, almost surely, $\#\partial_\textnormal{e} B(t) \leq Ct^{d-1}\E[Y\wedge t]$ and $\#\partial_\textnormal{e}^{\textnormal{ext}} B(t) \leq Ct^{d-1}$ for a large fraction of time.
	\begin{question}
		 Is it true that almost surely, $\#\partial_\textnormal{e} B(t) \leq Ct^{d-1}\E[Y\wedge t]$ and $\#\partial_\textnormal{e}^{\textnormal{ext}} B(t) \leq Ct^{d-1}$ for all large $t$? 
	\end{question}

	Combining Theorems~\ref{thm:main} and \ref{thm:holes}, we have bounds of the form
	\[
	\left[ t(1-F_Y(t)) \vee 1\right] t^{d-1} \lesssim \#\partial_{\textnormal{e}} B(t) \lesssim \mathbf{E}[Y \wedge t] t^{d-1}
	\]
	and we have seen (near equation~\eqref{eq: nachos_bellegrande}) that although the upper and lower bounds are of the same order for most distributions, they can be quite different for distributions with highly irregular tails.
	\begin{question}
		For heavy-tailed and irregular distributions, what is the correct order of $\#\partial_\textnormal{e}B(t)$?
	\end{question}

	Last, under the uniform curvature assumption and an exponential moment condition, we obtain almost surely,
	\[ 
	\#\partial_\textnormal{e} B(t) \leq C(\log{t})^Ct^{d-1} \quad \text{for all large }t.
	\]
	\begin{question}
		Can one remove the $\log$ term under further or possibly stronger assumptions?
	\end{question}



\begin{thebibliography}{1}
		
		\bibitem{A}
		Alexander, K.S. {\it Approximation of subadditive functions and convergence rates in limiting-shape results}. The Annals of Probability, 25(1): 30-55, 1997.
		
		\bibitem{AP}
		Antal, P., Pisztora, A. {\it On the chemical distance for supercritical Bernoulli percolation}. The Annals of Probability, 24(2): 1036-1048, 1996.
		
		\bibitem{survey}
		Auffinger, A., Damron, M., Hanson, J. {\it 50 years of first passage percolation}. arXiv:1511.03262v2.
		
		\bibitem{Bouch}
		Bouch, G. {\it The expected perimeter in Eden and related growth processes}. Journal of Mathematical Physics, 56(12), 123302, 2016.
		
		\bibitem{BLM}
		Boucheron, S., Lugosi, G., Massart, P. {\it Concentration inequalities}. Oxford University Press.
		
		\bibitem{Burdzy}
		Burdzy, K., Pal, S. {\it Twin peaks}. arXiv: 1606.08025.
		
		\bibitem{CD}
		Cox, J.T., Durrett, R. {\it Some limit theorems for percolation processes with necessary and sufficient conditions}. The Annals of Probability, 9(4): 583-603, 1981.
		
		\bibitem{DLW}
		Damron, M., Lam, W.-K., Wang, X. {\it Asymptotics for $2d$ first passage percolation}. To appear in The Annals of Probability. 
		
		\bibitem{Eden}
		Eden, M. {\it A two-dimensional growth process}. Proc. 4th Berkeley Sympos. Math. Statist. and Prob., Vol. IV, 223--239, Univ. California Press, Berkeley, Calif, 1961.
		
		\bibitem{Grimmett}
		Grimmett, G. {\it Percolation}. 2nd edition. Springer, Berlin.
		
		\bibitem{kesten2}
		Kesten, H. {\it Aspect of first passage percolation}. \'Ecole d'\'Et\'e de Probabilit\'es de Saint Flour XIV, Lecture Notes in Mathematics, \textbf{1180}, 125-264.
		
		\bibitem{kesten}
		Kesten, H. {\it On the speed of convergence in first-passage percolation}. Ann. Appl. Probab., \textbf{3} 296-338.
		
		\bibitem{L}
		Leyvraz, F. {\it The `active perimeter' in cluster growth models: a rigorous bound}. J. Phys. A: Math. Gen., \textbf{18}, 941-945, 1985.
		
		\bibitem{newman}
		Newman, C.M. {\it A surface view of first-passage percolation}. Proceedings of International Congress of Mathematicians, Vol. 1,2 (Z\"urich, 1994), 1017-1023, Birkh\"auser, Basel, 1995.
		
		\bibitem{R}
		Richardson, D. {\it Random growth in a tessellation}. Proc. Cambridge Philos. Soc. \textbf{74} 515-528, 1973.
		
		\bibitem{T} Tim\' ar, \' A.
		{\it Boundary-connectivity via graph theory}.
		Proc. Amer. Math. Soc. \textbf{141} no. 2, 475 - 480, 2013.
		
		\bibitem{ZS}
		Zabolitzky, J.G., Stauffer, D. {\it Simulation of large Eden clusters}. Physical Review A, 34(2), 1523-1530, 1986.
		
		
	\end{thebibliography}
\end{document}